\documentclass[english,10pt]{article}
\usepackage[english]{babel}
\usepackage{amsmath,amsthm,amssymb ,latexsym, enumerate, hyperref, color, makeidx, xcolor, graphicx, amsmath, amsfonts, amsthm,amscd, amsmath, amssymb, amstext}%refcheck
\allowdisplaybreaks
\newtheorem{theorem}{Theorem}
\newtheorem{corollary}[theorem]{Corollary}

\newtheorem{lemma}[theorem]{Lemma}

\newtheorem{proposition}[theorem]{Proposition}
\newtheorem{remark}[theorem]{Remark}

\title{Directional derivatives and the central limit theorem on compact general one-dimensional lattices}

\author{Artur O. Lopes\thanks{Institute for Mathematics and Statistics at UFRGS - Brazil}\; and Victor Vargas\thanks{Center for Mathematics at FCUP -  Portugal}}

\begin{document}

\maketitle

\begin{abstract}
We will show the central limit theorem for the general one-dimensional lattice where the space of symbols is a compact metric space. We consider the CLT for Lipschitz-Gibbs probabilities and in the proof  we use several  properties of the Ruelle operator defined on our setting; this will require fixing an {\em a priori probability}. An important issue  in the proof of the CLT is the existence of  a certain second-order derivative, and this will follow from the analytic properties that will be  described in detail throughout the paper. As additional results of independent interest, we will also describe some explicit estimates of the first and second directional derivatives of some dynamical entities like entropy and pressure. For example:  given a fixed potential $f$, and a variable observable $\eta$ on the Kernel of the Ruelle operator $\mathcal{L}_f$,  we consider  the equilibrium probability $\mu_{f + t \,\eta}$ for $f + t \,\eta$. We estimate the values 
$ \frac{d}{dt} h (\mu_{f + t \,\eta})|_{t=0}$ and $ \frac{d^2}{dt^2} h (\mu_{f + t \,\eta})|_{t=0}$, where $h (\mu_{f + t \,\eta})$ is the entropy of $ \mu_{C + t \,\eta}$. For fixed $f$ we can find conditions that can indicate the $\eta$ attaining the maximal possible  value  of $ \frac{d}{dt} h (\mu_{f + t \,\eta})|_{t=0}$ (up to a natural normalization of $\eta)$, entirely in terms of elements on the kernel of $\mathcal{L}_f$. We also consider  directional derivatives of the eigenfunction.
\end{abstract}

{\footnotesize {\bf Keywords:} Asymptotic Variance, Central Limit Theorem, Directional Derivatives, Lipschitz-Gibbs Probability, RPF Theorem, Ruelle Operator.}

{\footnotesize {\bf Mathematics Subject Classification (2020):} 28Dxx, 37A60, 37D35}

\section{Introduction}

Our main goal in this paper is to show the central limit theorem on a class of  compact general one-dimensional lattices (see Section \ref{clt-sec}), which are defined as symbolic spaces where the set of symbols is a compact metric space $M$ and the dynamics is played by the shift map $T : M^\mathbb{N} \to M^\mathbb{N}$; this kind of models is sometimes called in the mathematical literature {\em  generalized one-dimensional $XY$ models} (see \cite{BCLMS} and  \cite{LMMS}). We obtain the CLT for Lipschitz-Gibbs probabilities on $\Omega  := M^\mathbb{N}$ from the spectral properties of a Ruelle operator (to be defined in Section \ref{pre-sec}). This will require exploring properties of  the  so-called Ruelle operator as presented in \cite{LMMS}, where it is necessary to consider an {\em a priori probability} on $M$.

In the classical case (where $M=\{1,2,..,d\}$), well-known results in this direction have been obtained by diverse authors on several classes of dynamical contexts (for instance in \cite{Lal1}). In \cite{Liv} it was proven a CLT via martingale differences in the context of a bijective dynamical system satisfying suitable conditions, even more, it was shown a proof characterizing the CLT through properties of decay of correlations when the dynamics is surjective (see also \cite{Var} for an interesting approach). In \cite{Den} central limit theorems were proven for a wide class of dynamical systems, among them, geodesic flows, piece-wise expanding maps on the interval, and irrational rotations. In \cite{DG} the authors proved a CLT using as a main tool the so-called transfer operators defined on a suitable skew product. 

A key element in the proof of the CLT that we present in this paper is the existence of a certain second-order derivative and this will follow from the analytic properties of the Ruelle operator that will be properly described later (see Section \ref{aro-sec}). As additional results of independent interest, in Theorems \ref{faef1}, \ref{faef2}, Proposition \ref{dent} and Theorems  \ref{ddef14}, \ref{rafrug}, \ref{saKL}, we will also describe some explicit estimates of the first and second derivatives for some dynamical entities which are somehow related to the problem.

The characterization of the Ruelle operator in the context of the so-called generalized one-dimensional $XY$ models has been widely studied in different works. For instance, in \cite{LMMS} questions related to  the spectral properties of this kind of operator were addressed; it was also shown the existence of equilibrium states, a variational principle, and the existence of calibrated sub-actions. In \cite{SV} the authors  studied the mentioned problems on a model where the allowed sequences are given in terms of a continuous map of admissibility (a kind of generalization of  the classical shifts of finite type). The spectral gap (and the analyticity following this property) obtained in  item (4) in Theorem 1 in \cite{SV} contemplates our setting here as a particular case.

This paper is organized as follows: in Section \ref{pre-sec}  are presented the main definitions to be used throughout the paper, among them, the one for the Ruelle operator which we are interested in this context. In Section \ref{aro-sec} we state some well-known properties about the analyticity of some observables derived from the Ruelle operator and how it depends on the so-called spectral gap property. In Section \ref{clt-sec} we present the proofs of the main results of the paper, among them a  generalized version of the CLT in the context of the so-called {\em general one-dimensional lattices}. At the end of the paper, in Section \ref{Seco}, we present  explicit calculations for the directional derivatives of  entropy and pressure in terms of elements on the Kernel of the Ruelle operator.

We can take advantage of knowing differentiability in the present setting to investigate questions related to the first and second-directional derivatives of variations on time of dynamical entities related to pressure and entropy.   As an example, considering a fixed potential $f$ and a  variable observable $g$, denote by $\mu_{f + tg}$ the Lipschitz-Gibbs  probability (which is also equilibrium for the pressure of $ f + tg$) for the perturbed potential $f + tg$, when  $t$ is close to zero. It is natural to ask about the values
\begin{equation} \label{kjhd} \frac{d}{d t} h (\mu_{f + tg})|_{t=0}\,\,\,\,\text{and}\,\,\,\, \frac{d^2}{d t^2} h (\mu_{f + tg})|_{t=0}
\end{equation}
where $h (\mu_{f + tg})$ is the entropy of $ \mu_{f + tg}$ (see Proposition \ref{vaga1} and Corollary \ref{cvaga1}).

We can ask about the direction $g$ where the   value $\frac{d}{d t} h (\mu_{f + tg})|_{t=0} $ is the highest possible and how to get analytic  estimates of such $g$; all of that in terms of elements in the Kernel of the Ruelle operator $\mathcal{L}_f$. This can indicate the direction $g$ of the maximal increasing of entropy. Questions of this nature and  others are the main topics of Section \ref{Seco}. The estimation of  $\frac{d}{d t} h (\mu_{f + tg})|_{t=0}$ was already considered in \cite{GKLM}, but for other results we consider here it is appropriate to  prove things in a different way. 

We also consider directional derivatives associated with the pressure via expressions based entirely  on elements on the kernel of the Ruelle operator (see Theorem \ref{Prepre}). For  the explicit expression \eqref{vaga333} we take advantage of  the existence of a certain orthogonal family on the Kernel of the Ruelle operator (see \cite{LR1}).

 We also consider  directional derivatives of the eigenfunction of the Ruelle operator (see 
 Theorems \ref{rafrug} and \ref{saKL}).

\section{Preliminaries}
\label{pre-sec}

Consider a {\em compact metric space} $(M, d_M)$ satisfying $\mathrm{diam}(M) = 1$ and define $\Omega := M^{\mathbb{N}}$ as the set of sequences taking values on $M$ with the {\em shift map} given by $T((x_n)_{n = 1}^\infty) := (x_{n+1})_{n =1}^\infty$ acting on it. It is well-known that $\Omega$ equipped with the product topology results in a compact metric space and, even more, it is metrizable with $d(x, y) := \sum_{n = 1}^\infty\frac{1}{2^n}d_M(x_n, y_n)$. 

Throughout the paper, we denote by $\mathcal{B}_\Omega$, resp. $\mathrm{C}(\Omega)$, resp. $\mathrm{Lip}(\Omega)$, resp. $\mathcal{M}(\Omega) := \mathrm{C}(\Omega)^*$ resp. $\mathcal{M}_1(\Omega)$, resp. $\mathcal{M}_T(\Omega)$; the {\em Borel sigma-algebra} on $\Omega$, resp. the space of {\em continuous functions} from $\Omega$ into $\mathbb{R}$, resp. {\em Lipschitz continuous functions} from $\Omega$ into $\mathbb{R}$, resp. {\em Borel finite measures} on $\Omega$, resp. {\em Borel probability measures} on $\Omega$, resp. {\em Borel $T$-invariant probability measures} on $\Omega$. 

In this work, we are interested in the statistical behavior of probability measures obtained as the fixed points of the dual of a Ruelle operator which we call Lipschitz-Gibbs probabilities. The {\em Ruelle operator} for a potential $f \in \mathrm{C}(\Omega)$ is defined as the linear one assigning to each $w \in \mathrm{Lip}(\Omega)$ the map $\mathcal{L}_f(w) \in \mathrm{Lip}(\Omega)$, given by the expression
\begin{equation}
\label{ruop}
\mathcal{L}_f(w)(x) := \int_M e^{f(ax)}w(ax) d\nu(a) \;,
\end{equation}
where $\nu \in \mathcal{M}_1(M)$ is an {\em a priori measure} (with support equal $M$), and  each sequence $ax := (a, x_1, x_2, ...) \in \Omega$ satisfies $T(ax) = x$.

By the {\em RPF theorem} (see \cite{LMMS}, or \cite{LTF}, for details), for each $f \in \mathrm{Lip}(\Omega)$, there are $\lambda_f > 0$, $w_f \in \mathrm{Lip}(\Omega)^+$ and $\rho_f \in \mathcal{M}_1(\Omega)$, such that, 

\begin{equation}\label{rpf1}\mathcal{L}_f(w_f) = \lambda_f w_f \text{ and } \mathcal{L}_f^*(\rho_f) = \lambda_f \rho_f \;,
\end{equation}
and, even more, for any $w \in \mathrm{Lip}(\Omega)$ the following limit holds

\begin{equation}\label{rpf2}\lim_{n \to \infty} \Bigl\| \frac{\mathcal{L}_f^n(w)}{\lambda_f^n} - w_f \int_\Omega w d\rho_f \Bigr\|_\infty = 0 \;.
\end{equation}
 
A straightforward argument shows that the limit in \eqref{rpf2} implies the spectral gap property, i.e., that the main eigenvalue $\lambda_f$ is maximal and isolated into the spectrum of the operator (see for details \cite{SV}).

A potential $f \in \mathrm{Lip}(\Omega)$ is called {\em normalized} when $\mathcal{L}_f(1) = 1$,  we denote the set of normalized potentials by $\mathcal{N}(\Omega)$. It is easy to check that for $f \in \mathcal{N}(\Omega)$ the operator $\mathcal{L}_f^*$ preserves $\mathcal{M}_1(\Omega)$ and it is well-known that for arbitrary $f \in \mathrm{Lip}(\Omega)$, the {\em normalization} $N(f) := f + \log(w_f) - \log(w_f \circ \sigma) - \log(\lambda_f)$ belongs to $\mathcal{N}(\Omega)$. 

We call here $P(f):=\log \lambda_f$ the {\em pressure} of $f:\Omega \to \mathbb{R}$ (which due to the classical variational formula, as in  \cite{PP} or \cite{LMMS}, will play a  role only in Section \ref{Seco}).

We denote by $\mu_f \in \mathcal{M}_1(\Omega)$ the {\em Lipschitz-Gibbs probability for $f$, which is the unique fixed point for the dual Ruelle operator associated to the normalization $N(f)$, i.e., $\mathcal{L}_{N(f)}^*(\mu_f) = \mu_f$.} Moreover, it is easy to check that $\mu_f \in \mathcal{M}_T(\Omega)$ and, even more, choosing the eigenfunction and eigenprobability in \eqref{rpf1} satisfying $\int_\Omega w_f d\rho_f = 1$, {we have $\frac{d\mu_f}{d\rho_f} = w_f$ (see \cite{LMMS} for details). Besides that, under suitable assumptions, the identification  map $f \mapsto \mu_f$ is bijective and analytic (see \cite{BCV}). In this way, the set $\mathcal{N}(\Omega)$ can be also identified as the set of Lipschitz-Gibbs probabilities associated with all possible Lipschitz potentials, which we also denote by $\mathcal{N}(\Omega)$.

In the following sections we elaborate on the analyticity of the pressure map and later we use that to prove a version of the central limit theorem. For considering the meaning  of equilibrium state for a given Lipschitz continuous potential, it is necessary the concept of entropy and pressure for $T$-invariant probabilities on the $XY$ model (see \cite{LMMS}). Actually, in the mentioned work, it is shown that the so-called Ruelle operator  (taking into account an  {\em a priori probability}) has several interesting properties allowing to characterize the dynamical and statistical behavior of the map $T$. 

For the benefit of the reader, in the first part of the next section we will state some  properties of Thermodynamic Formalism for the generalized $XY$ model, among them analyticity issues,  that we will need later. Our focus is on Lipschitz-Gibbs probabilities.

\section{Analyticity of the Ruelle Operator}
\label{aro-sec}

Our main goal in the next section is to prove a central limit theorem on the setting of one-dimensional lattices defined on compact metric spaces, using as a main tool the analyticity of the map $f \mapsto P(f)$, more specifically, using the existence of first and second derivatives  of the function $t \mapsto P(f + tg)$, with $t \in \mathbb{R}$. 

In order to do that, first we introduce some important analytic properties of functions related to the Ruelle operator defined in Section \ref{pre-sec} and its corresponding dual, besides their eigenfunctions and eigenprobabilities, the above as a  function of the potential $f$. We will state some useful properties of the Ruelle operator given by the expression in \eqref{ruop}.

It is known that for any $f \in \mathrm{Lip}(\Omega)$ and each $n \in \mathbb{N}$ the map $f \mapsto \mathcal{L}^n_f$ is analytic (see \cite{BCV, PP} for the classical case, and \cite{SV, SSS} for the $XY$ model case). Even more, the expression for the Fr\'echet derivative of $\mathcal{L}^n_{(\cdot)}$ at the point $f$ in the direction of $g$ is given by
\begin{equation}
D(\mathcal{L}^n_{(\cdot)})_f(g) = \sum_{i=1}^{n} \mathcal{L}^i_f(g \mathcal{L}^{n-i}_f(g)) \;.
\end{equation}

The former expression also helps to guarantee that both of the maps $f \mapsto w_f$ and  $f \mapsto \lambda_f = e^{P(f)}$ are analytic. In order to guarantee the former claim, we need to introduce a strong result requiring advanced technical tools in Functional Analysis and the so-called approximation theory (for instance the ones presented in Chapter VII Sections 3-6 in \cite{Co}, see also Section 5 in \cite{ManeTF} and Section IV in \cite{Lor}).  

A detailed study of the spectrum of the Ruelle operator appears in \cite{FaJi} and \cite{Bala} under the assumption that $f \in \mathrm{Lip}(\Omega)$. In the mentioned works it is shown that the space of eigenfunctions associated with the main eigenvalue has dimension 1 and, even more, by a general result in \cite{LMMS} the eigenvalue $\lambda_f$ results isolated. Using the same reasoning, one can take advantage  of the following propositions (see \cite{FaJi}, \cite{Man} and \cite{ManeTF}) to get similar results in our setting. 

\begin{proposition}{\cite{BCV,CV,ManeTF,Lor}} \label{espectro2} 
Given $f \in \mathrm{Lip}(\Omega)$,  not necessarily normalized,   consider a small circle curve $\gamma:[0,1] \to \mathbb{C}$, $\gamma(t) = \epsilon\,( e^{2 \pi i t} + \lambda_f)$ around the eigenvalue $\lambda_f$ (by the spectral gap property no other elements of the spectrum of $\mathcal{L}_f$ belong to the interior of the curve). Then, the eigenfunction $w_g$, where the map $g \in \mathrm{Lip}(\Omega)$ is close by $f$,   can be obtained as the  line  integral
\begin{equation} \label{rrer} w_g = \frac{1}{2 \pi\, i} \Bigl(\int_\gamma  (z \,I - \mathcal{L}_g)^{-1} dz\Bigr)(1)  \,.
\end{equation}

From the former expression, it follows that the map $g \mapsto w_g$ is analytic on a neighborhood of $f$ contained into the space $\mathrm{Lip}(\Omega)$.
\end{proposition}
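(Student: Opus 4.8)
The plan is to recognize the right-hand side of \eqref{rrer} as the application, to the constant function $1$, of the Riesz spectral projection associated with the isolated eigenvalue $\lambda_g$, and then to read off both the identity $w_g = P_g(1)$ and the analyticity of $g \mapsto w_g$ from the holomorphic functional calculus. First I would set
$$P_g := \frac{1}{2\pi i}\int_\gamma (zI - \mathcal{L}_g)^{-1}\, dz,$$
as a bounded linear operator on $\mathrm{Lip}(\Omega)$, for $g$ in a neighborhood of $f$, and check it is well-defined. The contour $\gamma$ is compact and, by the spectral gap property for $f$, avoids the spectrum of $\mathcal{L}_f$; hence $z \mapsto (zI - \mathcal{L}_f)^{-1}$ is continuous and bounded on $\gamma$, say by $C$. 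For $g$ close to $f$, the continuity of the analytic map $g \mapsto \mathcal{L}_g$ gives $\|\mathcal{L}_g - \mathcal{L}_f\| < 1/C$, so a Neumann series shows that $zI - \mathcal{L}_g$ is invertible for every $z \in \gamma$ with uniformly bounded inverse; thus $\gamma$ lies in the resolvent set of $\mathcal{L}_g$ and $P_g$ makes sense. The standard resolvent-identity computation then shows $P_g^2 = P_g$, so $P_g$ is a bounded projection, and $\gamma$ continues to enclose exactly the perturbed simple eigenvalue $\lambda_g$.

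Next I would identify the range of $P_g$. For $g = f$, the fact that the eigenspace of $\lambda_f$ has dimension one (by \cite{FaJi}, \cite{Bala}, \cite{LMMS}) gives that $P_f$ has rank one with range $\mathrm{span}(w_f)$; writing $\rho_f$ for the dual eigenmeasure normalized by $\int_\Omega w_f\, d\rho_f = 1$, the functional calculus yields $P_f(u) = \bigl(\int_\Omega u\, d\rho_f\bigr) w_f$. In particular $P_f(1) = \rho_f(\Omega)\, w_f = w_f$, since $\rho_f \in \mathcal{M}_1(\Omega)$. Because $g \mapsto P_g$ is norm-continuous, $\|P_g - P_f\|$ is small, and idempotents that are close in norm have equal rank; so $P_g$ is again rank one, with range $\mathrm{span}(w_g)$. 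Therefore $P_g(1) = c_g\, w_g$ for a scalar $c_g$ with $c_f = 1$, and shrinking the neighborhood keeps $c_g$ near $1$, so $P_g(1)$ is a nonzero multiple of $w_g$; taking it as the normalization of the eigenfunction yields exactly \eqref{rrer}.

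Finally, for the analyticity: since $g \mapsto \mathcal{L}_g$ is analytic and operator inversion is analytic on the open set of invertible operators, the map $g \mapsto (zI - \mathcal{L}_g)^{-1}$ is analytic, uniformly for $z$ on the compact contour $\gamma$. Integrating an analytic family of operators over the fixed contour preserves analyticity — one differentiates under the integral sign, the interchange being justified by the uniform resolvent bound along $\gamma$ — so $g \mapsto P_g$ is analytic on a neighborhood of $f$, and composing with the bounded evaluation $A \mapsto A(1)$ shows $g \mapsto w_g = P_g(1)$ is analytic, as claimed. The main obstacle is the stability step: ensuring that, after perturbing $f$ to a nearby $g$, the contour $\gamma$ continues to isolate a single simple eigenvalue and that $P_g$ remains rank one. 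This rests entirely on the uniform invertibility of $zI - \mathcal{L}_g$ along $\gamma$ furnished by the spectral gap together with the norm-continuity of $g \mapsto \mathcal{L}_g$, and everything else is routine functional calculus.
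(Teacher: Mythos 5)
Your argument is correct and is exactly the standard Riesz--projection/holomorphic functional calculus proof that the paper itself relies on: the paper does not prove Proposition \ref{espectro2} but cites it, and its surrounding discussion of the projector $\pi_1(\mathcal{L}_g)$ in \eqref{rrer2} is precisely the construction you carry out in detail (uniform invertibility of $zI-\mathcal{L}_g$ along $\gamma$, idempotency, rank-one stability, and differentiation under the contour integral). The only point worth making explicit is the identification of the single spectral point enclosed by $\gamma$ with the RPF eigenvalue $\lambda_g$; this follows, e.g., because $P_g(1)$ is a small perturbation of $w_f>0$ and hence a strictly positive eigenfunction, which forces the corresponding eigenvalue to be the maximal one.
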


Note that the eigenfunction $w_g$ for $\mathcal{L}_g$ described by \eqref{rrer} is not necessarily normalized (in the sense that $\int_\Omega w_g d\rho_g = 1$).  In fact, the {\em projection operator} $\pi_1(\mathcal{L}_g)$ given by the expression
\begin{equation} \label{rrer2} 
\pi_1 (\mathcal{L}_g) := \frac{1}{2 \pi\, i} \Bigl(\int_\gamma  (z \,I - \mathcal{L}_g)^{-1} dz\Bigr) \;,
\end{equation}
is describing the projection of the space $\mathrm{Lip}(\Omega)$ on the one-dimensional space of eigenfunctions for the operator $ \mathcal{L}_g$ (see for instance \cite{Co}).  Furthermore, the function  $g \mapsto \pi_1 (\mathcal{L}_g)$ is complex analytic and any $w \in \mathrm{Lip}(\Omega)$ satisfies 
$$\mathcal{L}_g ( \pi_1 (\mathcal{L}_g)(w)) = \lambda_g \pi_1 (\mathcal{L}_g)(w).$$ In the next proposition we use the operator in \eqref{rrer2} to guarantee analyticity of the maps $f \mapsto \lambda_f$ and $f \mapsto P(f)$. 

\begin{proposition}\label{espectro3} 
Let $f \in \mathrm{Lip}(\Omega)$ be a not normalized potential. Then, the map $g \mapsto \lambda_g$ is analytic on the variable $g$ varying on a neighborhood of $f$. By the above, it follows that also the pressure map $g \mapsto P(g) = \log (\lambda_g)$ is analytic on the same neighborhood of $f$. 
\end{proposition}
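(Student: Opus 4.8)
The plan is to extract the scalar $\lambda_g$ from the analyticity already established for the eigenfunction map and the operator map, and then to compose with the real-analytic logarithm. I would recall two facts from the material preceding this statement: first, that $g \mapsto \mathcal{L}_g$ is analytic as a map from a neighborhood of $f$ in $\mathrm{Lip}(\Omega)$ into the Banach algebra of bounded operators on $\mathrm{Lip}(\Omega)$ (the $n=1$ case of the analyticity of $f \mapsto \mathcal{L}^n_f$); and second, from Proposition~\ref{espectro2}, that $g \mapsto w_g = \pi_1(\mathcal{L}_g)(1)$ is analytic into $\mathrm{Lip}(\Omega)$ and satisfies the eigenvalue relation $\mathcal{L}_g(w_g) = \lambda_g\, w_g$. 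The whole argument turns on exploiting this relation to isolate $\lambda_g$ as an explicit analytic function of $g$.

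First I would observe that $g \mapsto \mathcal{L}_g(w_g)$ is analytic near $f$: the evaluation map $(\mathcal{L}, w) \mapsto \mathcal{L}(w)$ is bounded and bilinear, hence analytic, and it is being composed with the pair of analytic maps $g \mapsto (\mathcal{L}_g, w_g)$. Next I would fix any point $x_0 \in \Omega$. Since $w_f \in \mathrm{Lip}(\Omega)^+$ is strictly positive we have $w_f(x_0) > 0$, and by continuity of $g \mapsto w_g$ in the supremum norm there is a (possibly smaller) neighborhood of $f$ on which $w_g(x_0) \neq 0$. Because the evaluation functional $w \mapsto w(x_0)$ is a bounded linear functional on $\mathrm{Lip}(\Omega)$, post-composing it with the two analytic maps above shows that both $g \mapsto \mathcal{L}_g(w_g)(x_0)$ and $g \mapsto w_g(x_0)$ are scalar analytic functions.

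Evaluating the eigenvalue relation at $x_0$ gives $\mathcal{L}_g(w_g)(x_0) = \lambda_g\, w_g(x_0)$, whence
\[
\lambda_g = \frac{\mathcal{L}_g(w_g)(x_0)}{w_g(x_0)}
\]
on the neighborhood where the denominator does not vanish. This exhibits $g \mapsto \lambda_g$ as a quotient of two analytic scalar functions with nonvanishing denominator, and it is therefore analytic. Finally, since $\lambda_g > 0$ for every $g$ (it is the leading eigenvalue furnished by the RPF theorem) and $\log$ is real-analytic on $(0, \infty)$, the composition $g \mapsto P(g) = \log(\lambda_g)$ is analytic on the same neighborhood of $f$.

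I expect the only points requiring care — and the nearest thing to an obstacle — to be bookkeeping rather than substance: confirming that post-composing a Banach-space-valued analytic map with a bounded linear functional preserves analyticity, and that the denominator $w_g(x_0)$ stays bounded away from zero near $f$ (a continuity argument from the strict positivity of $w_f$). The genuinely delicate input, namely the rank-one Riesz projection and the spectral gap that make $\pi_1(\mathcal{L}_g)$ well-defined and analytic, is already supplied by Proposition~\ref{espectro2}, so no new spectral analysis is needed here.
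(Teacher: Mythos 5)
Your proof is correct and follows essentially the same route as the paper: both express $\lambda_g$ as a quotient of two analytic scalar functions obtained by applying a fixed bounded linear functional to the analytically varying eigenfunction $\pi_1(\mathcal{L}_g)(\cdot)$ and to its image under $\mathcal{L}_g$, with the denominator nonvanishing near $f$. The only cosmetic difference is that you scalarize by point evaluation at $x_0$ while the paper integrates against a fixed measure $\rho$ with $\int_\Omega w_f\, d\rho \neq 0$; both are bounded linear functionals on $\mathrm{Lip}(\Omega)$, so the arguments are interchangeable.
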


\begin{proof} Note that  $\pi_1 (\mathcal{L}_g) (w_f)$ is an eigenfunction for the operator $ \mathcal{L}_g$ associated to $\lambda_g$, and varies analytically on $g$. Consider  $\rho \in \mathcal{M}_1(\Omega)$ supported on $\Omega$, such that, $\int_\Omega w_f d \rho \neq 0$. Then,
\begin{equation} \label{rre9}  
\lambda_g = \frac{ \int_\Omega \mathcal{L}_g  (\pi_1 (\mathcal{L}_g) (w_f))\, d \rho }{\int_\Omega  \pi_1 (\mathcal{L}_g) (w_f)\, d \rho }.
\end{equation}

Note that the denominator is not zero for $g$ close to $f$, which guarantees that the former expression is well-defined and a quotient of analytic functions. Therefore, the function $g \mapsto \lambda_g$ results in analytic on the variable $g$.
\end{proof}

{Consider $\theta$ acting on $\mathrm{Lip}(\Omega)$ given by $\theta(f) := \mathcal{L}_{f}$. It is well-known that the map $\theta$ is analytic on all the space $\mathrm{Lip}(\Omega)$ (see \cite{Contre,Man, SSS} for details).

More precisely, considering a fixed $w \in \mathrm{Lip}(\Omega)$, we have that the Fr\'echet derivative of $\theta$ at $f$ in direction of $g$ satisfies the expression
\begin{align*} 
\theta(f + g)\,(w) -  \theta(f)(w) 
&= \sum_{j=1}^\infty\, \frac{1}{j!}  \theta(f)(w\, g^j) \\
&= \label{ruru1}\sum_{j=1}^\infty\, \frac{1}{j!} \mathcal{L}_f(w\, g^j)
= \sum_{j=1}^\infty\, \frac{1}{j!} D^j \theta (f)\,( \underbrace{g, g,...,g}_j)\,.
 \end{align*}
 
 \medskip
 
In \cite{SSS} the authors show analytic properties of the Ruelle operator defined in the expression \eqref{ruop}, even more, from Theorem 3.5 in \cite{SSS}, we have that for $f, g, w \in \mathrm{Lip}(\Omega)$ the following analytic expression holds
 \begin{equation} \label{po1} 
 \mathcal{L}_{f + g}(w)(x)  =\sum_{j=1}^\infty\, \frac{1}{j!}\, \int_M e^{f(ax) }\, w(ax) \,g(a x)^j d \nu(a) \;.
 \end{equation}
 
 In particular, a second-order expression for Taylor's polynomial in our setting is given by
 \begin{equation} \label{po2}
 \mathcal{L}_{f + g}(w)(x)  =\, \, \int_M e^{f(ax) }\, w(ax) \,\Bigl(1 + g(ax) + \frac{g(ax)^2}{2}\Bigr) d \nu(a) + o_3(g) \;.
 \end{equation}

It is important to point out that the proof of the former expression was obtained without using the spectral gap property of the Ruelle operator (well-known in the case where $M$ is a finite set as described in \cite{ManeTF} and \cite{PP}).
 
Above it is considered the strong norm of operators and that $\mathcal{M}(\Omega) := \mathrm{C}(\Omega)^*$ is the space of continuous linear functionals with domain on $\mathrm{C}(\Omega)$ (see note on top page 190 in \cite{Lal2} for an interesting remark).

It follows that if we fix $f \in \mathrm{Lip}(\Omega)$ and consider a family of potentials $tf$, where $t \in \mathbb{R}$, then, the maps $t \mapsto w_{tf}$ and $t \mapsto \lambda _{tf}$ are analytic on the variable $t$. For the corresponding result for the map $t \mapsto \rho_{tf}$, see Note on top  page 190 in \cite{Lal2}. In the next lemma, we present an explicit expression for the Gateaux derivative for the map $f \mapsto \lambda_f$.

\begin{lemma}{\cite{BCV, GKLM}} \label{ddef1}
Consider $f, g \in \mathrm{Lip}(\Omega)$, $t \in \mathbb{R}$, and assume that $f \in \mathcal{N}(\Omega)$. Then,
\begin{equation} \label{lulu4}  
\frac{d}{dt} \lambda_{f + tg}|_{t=0} =  \frac{d}{dt} e^{ P(f + tg)}|_{t=0} = \int_\Omega g \, d\mu_f \,.
\end{equation}

If the potential $f$ is not normalized. Then,
\begin{equation} \label{lulu41}\frac{d}{dt} \lambda_{f + tg}|_{t=0} = \frac{d}{dt} e^{ P(f + tg)}|_{s=0} = \lambda_f\, \int_\Omega g \, d\mu_f \,.
\end{equation}
\end{lemma}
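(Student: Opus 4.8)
The first equality in both \eqref{lulu4} and \eqref{lulu41} is immediate from the definition $\lambda_g = e^{P(g)}$, so the entire content lies in evaluating $\frac{d}{dt}\lambda_{f+tg}|_{t=0}$. The plan is to differentiate the eigenvalue relation $\mathcal{L}_{f+tg}(w_{f+tg}) = \lambda_{f+tg}\, w_{f+tg}$ at $t = 0$ and then integrate the resulting identity against the eigenprobability $\rho_f$. The reason for pairing with $\rho_f$ is that it annihilates the (a priori unknown) derivative of the eigenfunction, leaving behind an explicit expression for $\frac{d}{dt}\lambda_{f+tg}|_{t=0}$.

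First I would record the derivative of the operator itself. Writing $f_t := f + tg$ and differentiating \eqref{po1} (with $g$ replaced by $tg$) term by term at $t=0$, only the $j=1$ term contributes, which gives $\frac{d}{dt}\mathcal{L}_{f_t}(w)|_{t=0} = \mathcal{L}_f(g\,w)$ for every fixed $w \in \mathrm{Lip}(\Omega)$. Since the maps $t \mapsto w_{f_t}$ and $t \mapsto \lambda_{f_t}$ are analytic (by Propositions \ref{espectro2} and \ref{espectro3} and the discussion preceding this lemma), I may differentiate the eigenvalue relation using the product rule. Denoting $\dot{w} := \frac{d}{dt}w_{f_t}|_{t=0}$ and $\dot{\lambda} := \frac{d}{dt}\lambda_{f_t}|_{t=0}$, this yields
$$\mathcal{L}_f(g\, w_f) + \mathcal{L}_f(\dot{w}) = \dot{\lambda}\, w_f + \lambda_f\, \dot{w}.$$

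The final step is to integrate this identity with respect to $\rho_f$ and to exploit the dual eigen-relation $\mathcal{L}_f^*(\rho_f) = \lambda_f \rho_f$, that is, $\int_\Omega \mathcal{L}_f(u)\, d\rho_f = \lambda_f \int_\Omega u\, d\rho_f$ for all $u$. Applying this to $u = g\, w_f$ and to $u = \dot{w}$ turns the left-hand side into $\lambda_f \int_\Omega g\, w_f\, d\rho_f + \lambda_f \int_\Omega \dot{w}\, d\rho_f$, while the right-hand side integrates to $\dot{\lambda} \int_\Omega w_f\, d\rho_f + \lambda_f \int_\Omega \dot{w}\, d\rho_f$. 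The terms $\lambda_f \int_\Omega \dot{w}\, d\rho_f$ cancel, and normalizing so that $\int_\Omega w_f\, d\rho_f = 1$ leaves $\dot{\lambda} = \lambda_f \int_\Omega g\, w_f\, d\rho_f$. Using the Radon--Nikodym identity $\frac{d\mu_f}{d\rho_f} = w_f$ recorded in Section \ref{pre-sec}, this reads $\dot{\lambda} = \lambda_f \int_\Omega g\, d\mu_f$, which is exactly \eqref{lulu41}; specializing to $f \in \mathcal{N}(\Omega)$, where $w_f \equiv 1$ and $\lambda_f = 1$, gives \eqref{lulu4}.

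The only delicate point is the justification of differentiating under the integral sign and of applying the product rule to $t \mapsto \mathcal{L}_{f_t}(w_{f_t})$, but this is already guaranteed by the analyticity statements established earlier in this section. Thus, once the pairing with $\rho_f$ is invoked to remove $\dot{w}$, the argument is essentially bookkeeping.
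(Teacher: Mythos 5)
Your proof is correct, and it is essentially the argument the paper relies on: the lemma itself is only cited (to \cite{BCV, GKLM}), but your computation --- differentiate $\mathcal{L}_{f+tg}(w_{f+tg})=\lambda_{f+tg}\,w_{f+tg}$, pair with $\rho_f$ so that the dual eigen-relation cancels the unknown $\frac{d}{dt}w_{f+tg}|_{t=0}$, and use $\frac{d\mu_f}{d\rho_f}=w_f$ --- is exactly the first-order version of the computation the paper performs for the second derivative in Lemma \ref{cltav1}. The analyticity facts you invoke to justify the product rule are indeed those established earlier in Section \ref{aro-sec}, so no gap remains.
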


The former lemma implies directly the following result characterizing the Gateaux derivative for the function $f \mapsto P(f)$.

\begin{lemma}{\cite{BCV, GKLM}} \label{late} 
Given $f, g \in \mathrm{Lip}(\Omega)$ and $t \in \mathbb{R}$, denote by $p(t) := P(f + tg)$ the pressure of the potential $f + tg$. Then, we have
\begin{equation} \label{lulu473} 
p^\prime(0) = \frac{d}{dt} P(f + tg)|_{t=0} = \int_\Omega g \, d\mu_f \;.
\end{equation}
\end{lemma}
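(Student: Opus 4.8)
The plan is to reduce everything to Lemma \ref{ddef1} by the chain rule. By definition $P(g) = \log \lambda_g$, so the function $p(t) = P(f+tg) = \log \lambda_{f+tg}$ is the composition of the map $t \mapsto \lambda_{f+tg}$ (whose analyticity was established in Proposition \ref{espectro3} and the discussion preceding this lemma) with the logarithm. Hence $p$ is differentiable near $t = 0$, and differentiating gives
\begin{equation*}
p'(0) = \frac{1}{\lambda_f}\,\frac{d}{dt}\lambda_{f+tg}\Big|_{t=0}\,.
\end{equation*}

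First I would handle the general (not normalized) case directly. Applying the second identity of Lemma \ref{ddef1}, namely $\frac{d}{dt}\lambda_{f+tg}|_{t=0} = \lambda_f \int_\Omega g\, d\mu_f$, and substituting it into the expression above, the factor $\lambda_f$ cancels and I obtain $p'(0) = \int_\Omega g\, d\mu_f$, as claimed. Since every Lipschitz potential can be treated in this way, this single computation already proves the statement in full generality.

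For completeness I would also verify consistency with the normalized case. If $f \in \mathcal{N}(\Omega)$, then $\mathcal{L}_f(1) = 1$, so the constant function $1$ is a positive eigenfunction; by the uniqueness part of the RPF theorem the main eigenvalue is then $\lambda_f = 1$. In that situation the first identity of Lemma \ref{ddef1} reads $\frac{d}{dt}\lambda_{f+tg}|_{t=0} = \int_\Omega g\, d\mu_f$, and since $\lambda_f = 1$ the chain-rule formula again yields $p'(0) = \int_\Omega g\, d\mu_f$. The two formulations of Lemma \ref{ddef1} are thus perfectly consistent, the normalization factor being exactly what the logarithmic derivative absorbs.

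I do not expect any serious obstacle here, since the result is an immediate corollary of Lemma \ref{ddef1}. The only point deserving a word of care is the legitimacy of applying the chain rule: this is guaranteed because $\lambda_{f+tg} > 0$ for all $t$ (the leading eigenvalue is strictly positive by RPF) and because $t \mapsto \lambda_{f+tg}$ is analytic, so the curve stays in the domain where $\log$ is smooth. Everything else is a purely algebraic substitution.
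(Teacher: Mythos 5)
Your proposal is correct and follows exactly the route the paper intends: the paper states that Lemma \ref{ddef1} ``implies directly'' the result, which is precisely your chain-rule computation $p'(0) = \lambda_f^{-1}\,\frac{d}{dt}\lambda_{f+tg}|_{t=0} = \lambda_f^{-1}\cdot\lambda_f\int_\Omega g\,d\mu_f$. You simply make explicit the cancellation and the consistency check with the normalized case that the paper leaves to the reader.
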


\begin{remark} \label{lilim2}
Note that the possible values of $\frac{d}{dt} e^{p(t)}|_{t=s}$ are on a finite sub-interval $(a, b)\subset \mathbb{R}$. Moreover, we have
\begin{equation} \label{lulu415} 
p^\prime(s) = \frac{d}{dt} p(t)|_{t=s} = \int_\Omega g \, d\mu_{f + sg} \;.
\end{equation}
Actually, it is only necessary to take $r := t-s$, $\psi := f + sg$ and later calculate $\frac{d}{dr} p(r)|_{r=0}$ such as appears in Lemma \ref{late}.
\end{remark}

The next theorem is the main purpose of \cite{GKLM} and provides a structure of the Riemann manifold for $\mathcal{N}(\Omega)$.

\begin{theorem}{\cite{GKLM}} \label{faef7} 
The set  $\mathcal{N}(\Omega)$ is an analytic infinite dimensional manifold, and the tangent space of $\mathcal{N}(\Omega)$ at $f$ satisfies $T_f (\mathcal{N}(\Omega)) = \mathrm{Ker}(\mathcal{L}_f)$, with $\mathcal{L}_f$ acting on the space $\mathrm{Lip}(\Omega)$. Furthermore, the inner product $< v, w > := \int_\Omega v w d \mu_f$, where  $v, w  \in  T_f (\mathcal{N}(\Omega))$, provides a Riemann structure on the manifold $\mathcal{N}(\Omega)$. In particular, the norm of an element $\eta \in \mathrm{Ker}(\mathcal{L}_f)$ is given by $\|\eta\|=  \sqrt{\int_\Omega \eta^2  d\mu _f}$. 
\end{theorem}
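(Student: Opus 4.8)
The plan is to realize $\mathcal{N}(\Omega)$ as the image of an analytic retraction of $\mathrm{Lip}(\Omega)$ and then read off both the manifold structure and the tangent space from the derivative of that retraction. Concretely, I would consider the normalization map $N : \mathrm{Lip}(\Omega) \to \mathrm{Lip}(\Omega)$, $N(f) = f + \log w_f - \log(w_f\circ T) - \log\lambda_f$. By Proposition \ref{espectro2} and Proposition \ref{espectro3} the maps $f\mapsto w_f$ and $f\mapsto\lambda_f$ are analytic with $w_f$ strictly positive, so $N$ is analytic; moreover $N(f)\in\mathcal{N}(\Omega)$ for every $f$, and if $f\in\mathcal{N}(\Omega)$ then $w_f\equiv 1$ and $\lambda_f=1$, whence $N(f)=f$. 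Thus $N\circ N = N$ is an analytic idempotent with image $\mathcal{N}(\Omega)$. Since a bounded idempotent $DN_f$ splits $\mathrm{Lip}(\Omega)$ as a topological direct sum $\mathrm{Image}(DN_f)\oplus\mathrm{Ker}(DN_f)$ of closed subspaces, the standard retraction theorem for Banach manifolds gives that $\mathcal{N}(\Omega)$ is an analytic submanifold with $T_f\mathcal{N}(\Omega)=\mathrm{Image}(DN_f)$.

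It then remains to identify $\mathrm{Image}(DN_f)$ with $\mathrm{Ker}(\mathcal{L}_f)$ at a normalized $f$. Differentiating $N$ at such an $f$ (using $w_f\equiv 1$, $\lambda_f=1$ and $\frac{d}{dt}\big|_{t=0}\lambda_{f+tg}=\int_\Omega g\,d\mu_f$ from Lemma \ref{ddef1}) yields
\begin{equation}
DN_f(g) = g + \dot w - \dot w\circ T - \int_\Omega g\,d\mu_f, \qquad \dot w := \tfrac{d}{dt}\big|_{t=0} w_{f+tg}.
\end{equation}
For the inclusion $\mathrm{Image}(DN_f)\subseteq\mathrm{Ker}(\mathcal{L}_f)$ I would differentiate at $t=0$ the identity $\mathcal{L}_{N(f+tg)}(1)\equiv 1$; writing $\psi_t:=N(f+tg)$ one has $\frac{d}{dt}\big|_{t=0}\mathcal{L}_{\psi_t}(1)(x)=\int_M e^{f(ax)}\dot\psi_0(ax)\,d\nu(a)=\mathcal{L}_f(DN_f(g))(x)$, so this says exactly $\mathcal{L}_f(DN_f(g))=0$. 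In particular the displayed formula exhibits any $h\in\mathrm{Ker}(DN_f)$ as $h=-\dot w_h+\dot w_h\circ T+\int_\Omega h\,d\mu_f$, i.e. a coboundary-plus-constant.

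The heart of the argument, and the step I expect to be the main obstacle, is the reverse inclusion $\mathrm{Ker}(\mathcal{L}_f)\subseteq\mathrm{Image}(DN_f)$, which reduces to a rigidity lemma: at a normalized $f$ the only function of the form $u-u\circ T + c$ lying in $\mathrm{Ker}(\mathcal{L}_f)$ is $0$. To see this I would use that for normalized $f$ one has $\mathcal{L}_f(u\circ T)=u\,\mathcal{L}_f(1)=u$ and $\mathcal{L}_f(1)=1$, so $\mathcal{L}_f(u-u\circ T+c)=\mathcal{L}_f(u)-u+c$; setting this to $0$ and integrating against the $\mathcal{L}_f^*$-invariant probability $\mu_f$ forces $c=0$, and then $\mathcal{L}_f(u)=u$ makes $u$ a leading eigenfunction, hence constant by the simplicity of $\lambda_f=1$ in the RPF theorem \eqref{rpf1}, so $u-u\circ T+c=0$. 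Granting this, for $\eta\in\mathrm{Ker}(\mathcal{L}_f)$ the vector $\eta-DN_f(\eta)$ lies in $\mathrm{Ker}(DN_f)$ (as $DN_f$ is a projection) and in $\mathrm{Ker}(\mathcal{L}_f)$ (as both $\eta$ and $DN_f(\eta)$ do), hence is a coboundary-plus-constant in the kernel and must vanish; thus $\eta=DN_f(\eta)\in\mathrm{Image}(DN_f)$. This yields $T_f\mathcal{N}(\Omega)=\mathrm{Ker}(\mathcal{L}_f)$.

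Finally, for the Riemannian structure I would check that $\langle v,w\rangle=\int_\Omega vw\,d\mu_f$ is a genuine inner product on each $T_f\mathcal{N}(\Omega)$: bilinearity and symmetry are immediate, and positivity follows since $\langle v,v\rangle=\int_\Omega v^2\,d\mu_f\ge 0$ with equality only if $v$ vanishes on $\supp\mu_f=\Omega$ --- here I use that $\mu_f$ has full support, which follows from $\nu$ having full support together with $w_f>0$ --- and a continuous $v$ vanishing on a dense set is identically $0$. Analytic dependence of the metric on the base point comes from analyticity of $f\mapsto\mu_f$. The stated norm $\|\eta\|=\sqrt{\int_\Omega\eta^2\,d\mu_f}$ is then just $\sqrt{\langle\eta,\eta\rangle}$.
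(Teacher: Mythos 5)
The paper gives no proof of this statement---it is quoted directly from \cite{GKLM}---so there is nothing internal to compare against; your argument is, however, essentially the proof of \cite{GKLM}: the normalization map $N$ as an analytic retraction of $\mathrm{Lip}(\Omega)$ whose derivative $DN_f$ is the bounded projection onto $\mathrm{Ker}(\mathcal{L}_f)$ along the coboundaries-plus-constants $\mathcal{C}$, which is exactly the content of Theorems 3.3 and 3.5 of \cite{GKLM} that the present paper itself quotes later in Section \ref{Seco}. Your reconstruction is correct, including the two points that actually need an argument: the rigidity lemma $\mathrm{Ker}(\mathcal{L}_f)\cap\mathcal{C}=\{0\}$ (via $\mathcal{L}_f(u\circ T)=u$, invariance of $\mu_f$ to kill the constant, and simplicity of the leading eigenvalue) and the full support of $\mu_f$ for positive-definiteness of the metric.
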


We point out that the results of \cite{GKLM} are also applied to the present case, where we consider the action of the shift on $\Omega$ and when the space of symbols $M$ is a compact metric space. This is so because the main ingredient was the use of Ruelle operators (in the same way that can be handled here)  plus analyticity (which is true in our setting).

\smallskip

\begin{remark}
The calculation of the Gauss curvature for $v, w  \in  T_f (\mathcal{N}(\Omega))$ appears in \cite{LR1} and the existence of geodesics in this context is the topic of \cite{LR2}.
\end{remark}

\medskip

The Riemannian metric described in Theorem \ref{faef7}  is related (but somehow different) to the so-called Pressure Riemannian  metric (see \cite{BCS} and \cite{McM}).

In Section \ref{Seco} we will be interested, among other topics, in estimating directional derivatives in the direction $\eta \in \mathrm{Ker}(\mathcal{L}_f)$ related to the {\em pressure problem} $P(\varphi)$ for a potential $\varphi \in \mathrm{Lip}(\Omega)$. In this case, it is also  natural to consider variations of the form  $f + tg$, with $g \in \mathrm{Lip}(\Omega)$ not necessarily on the Kernel of $\mathcal{L}_f$, and the corresponding directional derivative
\begin{equation} \label{ie123}  \frac{d}{dt}\, \Bigl(\,h( \mu_{f + tg}) + \int_\Omega \varphi\,\, d \mu_{f + tg} \,\Bigr)|_{t=0},\end{equation}
where $f \in \mathcal{N}(\Omega)$ and $\mu_{f + tg}$ is the Lipschitz-Gibbs probability for $f + tg$, with $t$ close to zero.

For fixed $\varphi$, we can ask about the maximal value of  \eqref{ie123} for  $\eta \in \mathrm{Ker}(\mathcal{L}_f)$, when $\eta$ satisfies $\int_\Omega \eta^2 =1$.

\section{Central Limit Theorem}
\label{clt-sec}

Throughout this section, we prove the central limit theorem for our setting. We will show that one can adapt some ideas of the classical case to the $XY$ model. Given the probability space $(\Omega, \mathcal{B}_\Omega, P)$, i.e., $P \in \mathcal{M}_1(\Omega)$, we say that the measurable function $X:\Omega \to \mathbb{R}$ has a {\em Gaussian distribution} with mean $\alpha$ and variance $\sigma^2>0$, when for any interval
 $(a,b)\subset \mathbb{R}$
$$ P(\{x \in \Omega | X(x) \in (a,b)\})= \frac{1}{ \sqrt{2 \pi\, \sigma^2} \,\,}\,\int_a^b \, e^{ - \frac{1 }{2}\, \frac{(t-\alpha)^2 }{\sigma^2}} dt =: \int_a^b\, d \phi_{\alpha,\sigma^2} (t) \;. $$

The former expression is equivalent to saying that
$$ \int_\Omega I_{(a,b)}( X) d P= \int_\Omega I_{X^{-1} (a,b)} d P = \frac{1}{ \sqrt{2 \pi\, \sigma^2} \,\,}\, \int_\mathbb{R} I_{(a,b)}(t) \, e^{ - \frac{1 }{2}\, \frac{(t-\alpha)^2 }{\sigma^2}} dt \;,$$
which implies (via approximation by simple functions) that for any $\varphi \in \mathrm{C}(\mathbb{R})$ the following expression holds
$$ \int_\Omega \varphi(X) d P = \frac{1}{ \sqrt{2 \pi\, \sigma^2} \,\,}\, \int_\mathbb{R} \varphi(t)  \, e^{ - \frac{1 }{2}\, \frac{(t-\alpha)^2 }{\sigma^2}} dt = \int_\mathbb{R} \varphi(t) d \phi_{\alpha,\sigma^2} (t) \;.$$

Then, in this case, the {\em mean} is given by
$$ \alpha := \int_\Omega X \, d P = \frac{1}{ \sqrt{2 \pi\, \sigma^2} \,\,}\, \int_\mathbb{R} t\, \, e^{ - \frac{1 }{2}\, \frac{(t-\alpha)^2 }{\sigma^2}} dt = \int_\mathbb{R} t d \phi_{\alpha,\sigma^2} (t) \;.$$
and  the {\em variance} satisfies the expression
\begin{align*}
\sigma^2 
&:=  \int_\Omega \,(X  - \int_\Omega X\, d P)^2 \, d P \\
&= \frac{1}{ \sqrt{2 \pi\, \sigma^2} \,\,}\, \int_\mathbb{R} (t-\alpha  )^2\, \, e^{ - \frac{1 }{2}\, \frac{(t-\alpha)^2 }{\sigma^2}} dt = \int_\mathbb{R} (t - \alpha)^2 d \phi_{\alpha,\sigma^2} (t) \;.
\end{align*}

\begin{theorem}\label{clt}
Denote $\beta (t) := P(f + tg)- P(f)$, where $f$ and $g$ belong to $\mathrm{Lip}(\Omega)$ and $t \in \mathbb{R}$.  Given $c\in \mathbb{R}, $ the {\em Central Limit Theorem} claims that
\begin{equation} \label{cltla} \lim_{n \to \infty}  \mu_{f} \Bigl(\,\Bigl\{ x \in \Omega \,|\,  \frac{1}{\sqrt{n}} \,
\sum_{j=0}^{n-1} (g(\sigma^j(x)) - \beta^\prime (0)) < c\Bigr\}\,\Bigr) = \int_{-\infty}^c d \phi_{0,\beta^{\prime \prime}(0)} (t) \,.
\end{equation}
The  expression on the first line describes the Gaussian distribution with mean $\alpha=0$ and variance $\sigma^2=\beta^{\prime \prime}\,(0).$
\end{theorem}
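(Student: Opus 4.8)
The plan is to prove the Central Limit Theorem via the classical method of characteristic functions combined with the analyticity of the pressure function established in Section \ref{aro-sec}. The central object is the \emph{moment generating function} of the normalized Birkhoff sums. For fixed $f \in \mathcal{N}(\Omega)$ and $g \in \mathrm{Lip}(\Omega)$, write $S_n g(x) := \sum_{j=0}^{n-1} g(\sigma^j(x))$. By L\'evy's continuity theorem, \eqref{cltla} will follow once I show that for every real $s$,
\begin{equation} \label{plan-mgf}
\lim_{n\to\infty} \int_\Omega \exp\Bigl( \tfrac{s}{\sqrt{n}} \bigl( S_n g(x) - n\,\beta^\prime(0) \bigr) \Bigr)\, d\mu_f(x) = \exp\Bigl( \tfrac{1}{2}\, s^2\, \beta^{\prime\prime}(0) \Bigr),
\end{equation}
i.e. that the rescaled sums converge in distribution to a Gaussian with mean $0$ and variance $\beta^{\prime\prime}(0)$. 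The key identity linking this integral to the Ruelle operator is that, since $f$ is normalized and $\mu_f$ is its Lipschitz-Gibbs probability with $\mathcal{L}_f^*(\mu_f) = \mu_f$, one has the exact formula $\int_\Omega e^{u\, S_n g}\, d\mu_f = \int_\Omega \mathcal{L}_{f + u g}^n(1)\, d\mu_f$ for any $u \in \mathbb{R}$.

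First I would establish that clean transfer-operator identity by induction on $n$, using the defining relation \eqref{ruop} and the invariance $\mathcal{L}_f^*(\mu_f)=\mu_f$; this reduces the moment generating function to iterates of the perturbed operator $\mathcal{L}_{f+ug}$. Next I would invoke the RPF theorem \eqref{rpf1}--\eqref{rpf2} for the potential $f + ug$: writing $\lambda_{f+ug}$, $w_{f+ug}$, $\rho_{f+ug}$ for its eigendata, the spectral gap gives $\mathcal{L}_{f+ug}^n(1) / \lambda_{f+ug}^n \to w_{f+ug}\,\int_\Omega 1\, d\rho_{f+ug}$ uniformly, so that
\begin{equation} \label{plan-asymp}
\int_\Omega e^{u\, S_n g}\, d\mu_f = \lambda_{f+ug}^n\,\bigl( C(u) + o(1) \bigr) = e^{n\,P(f+ug)}\,\bigl( C(u) + o(1) \bigr),
\end{equation}
where $C(u) = \bigl(\int_\Omega w_{f+ug}\, d\mu_f\bigr)\bigl(\int_\Omega d\rho_{f+ug}\bigr)$ is a bounded analytic factor with $C(0)=1$. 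Setting $u = s/\sqrt{n}$ and recalling $P(f)=0$ since $f$ is normalized, the exponent becomes $n\, P(f + \tfrac{s}{\sqrt n} g) = n\,\beta(\tfrac{s}{\sqrt n})$, while the prefactor from recentering contributes $-\tfrac{s\sqrt n}{1}\,\beta^\prime(0)$ through the $n\beta^\prime(0)$ subtraction in \eqref{plan-mgf}.

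The heart of the argument is then a second-order Taylor expansion of the pressure. By Proposition \ref{espectro3} the map $u \mapsto P(f+ug) = \beta(u)$ is analytic near $u=0$, so $\beta(u) = \beta(0) + \beta^\prime(0)\,u + \tfrac12\beta^{\prime\prime}(0)\,u^2 + o(u^2)$ with $\beta(0)=0$; here $\beta^\prime(0) = \int_\Omega g\, d\mu_f$ by Lemma \ref{late}, and $\beta^{\prime\prime}(0)$ is precisely the asymptotic variance whose \emph{existence} is guaranteed by the analyticity results of Section \ref{aro-sec} — this is the second-order derivative flagged in the abstract and introduction as the crucial ingredient. Substituting $u=s/\sqrt n$ into \eqref{plan-asymp}, the centered exponent is
\begin{equation} \label{plan-taylor}
n\,\beta\bigl(\tfrac{s}{\sqrt n}\bigr) - s\sqrt n\,\beta^\prime(0) = s\sqrt n\,\beta^\prime(0) + \tfrac12 s^2\,\beta^{\prime\prime}(0) + n\cdot o(n^{-1}) - s\sqrt n\,\beta^\prime(0) \;\longrightarrow\; \tfrac12 s^2\,\beta^{\prime\prime}(0),
\end{equation}
while $C(s/\sqrt n) \to C(0) = 1$ by continuity of the analytic factor. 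This yields \eqref{plan-mgf}, and \eqref{cltla} follows from L\'evy's theorem. The main obstacle I expect is making the asymptotics in \eqref{plan-asymp} uniform as $u = s/\sqrt n$ varies \emph{simultaneously} with $n\to\infty$: the RPF convergence \eqref{rpf2} is stated for fixed potential, so I must control the rate as the potential $f+ug$ drifts toward $f$. This requires the analytic (hence locally uniform in $u$) dependence of the spectral data $\lambda_{f+ug}, w_{f+ug}, \rho_{f+ug}$ together with a spectral-gap estimate uniform on a neighborhood of $f$ — exactly the content provided by Proposition \ref{espectro2}, Proposition \ref{espectro3}, and the uniform spectral gap from item (4) of Theorem 1 in \cite{SV}. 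Establishing that the error term $o(1)$ in \eqref{plan-asymp} is uniform over the relevant shrinking range of $u$ is the only genuinely delicate step; everything else is the standard Nagaev--Guivarc'h perturbation scheme transported to the $XY$-model setting.
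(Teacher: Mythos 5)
Your proposal follows essentially the same route as the paper's proof of Theorem \ref{kler}: reduce the moment generating function of the centered Birkhoff sums to iterates of the perturbed operator $\mathcal{L}_{f+\frac{s}{\sqrt{n}}g}$ (your identity with $\mathcal{L}_{f+ug}^n(1)$ against $\mu_f$ is the normalized-potential version of the paper's computation with $\mathcal{L}_{f+ug}^n(w_f)$ against $\rho_f$), apply the RPF convergence \eqref{rpf2} to the drifting potential, Taylor-expand the pressure to second order, and conclude by the continuity theorem for moment generating functions. The uniformity issue you flag for $u=s/\sqrt{n}\to 0$ simultaneously with $n\to\infty$ is indeed the delicate point, and the paper treats it in the same spirit (via analyticity of the spectral data and the uniform spectral gap from \cite{SV}), so no genuinely different argument is involved.
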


Below, we will present a general formulation of the above claim, for the specific proof of the CLT see Theorem \ref{kler}. Before that, we will present some technical results that allow us to prove one of the main theorems of this paper:

\begin{lemma}\label{cltav1}
Denote by $\beta (t) := P(f + tg)- P(f)  = p(t) - p(0)$, where $f, g \in \mathrm{Lip}(\Omega)$, $\int_\Omega g d\mu_f = 0$, and consider $s \in \mathbb{R}$. Then,
\begin{equation} \label{clt1a}
\begin{split}
 \frac{d^2}{ dt^2} p(t)|_{t=s} 
 &= \beta^{\prime \prime}(s) \\
 &= \lim_{n \to \infty} \frac{1}{n}\,\int_\Omega \Bigl(\,\sum_{j=0}^{n-1} \, (\,g \circ \sigma^j - \beta^{\prime}(s) \,) \,\Bigr)^2\, d \mu_{f+ s g} =: \sigma^2_{f + sg} (g).
\end{split}
\end{equation}

The last expression is known as the {\em asymptotic variance} $\sigma^2_{f + sg}(g)$ for the Lipschitz-Gibbs probability $\mu_{f + sg}$.
\end{lemma}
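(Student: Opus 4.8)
The plan is to identify $\beta''(s)$ with the limit of the second derivatives of the finite-horizon cumulant generating functions
$$\Lambda_n(t) := \frac{1}{n}\log\int_\Omega e^{t\, S_n g}\, d\mu_f, \qquad S_n g := \sum_{j=0}^{n-1} g\circ\sigma^j,$$
and then to justify the interchange of $\lim_n$ and $\tfrac{d^2}{dt^2}$ by upgrading the pointwise convergence $\Lambda_n\to\beta$ to \emph{uniform} convergence on a complex disc, where convergence of derivatives is automatic. First I would reduce to a convenient normalization. By Remark~\ref{lilim2} it suffices to treat $s=0$ after replacing $f$ by $\psi:=f+sg$ and $p$ by $r\mapsto P(\psi+rg)$; the Lipschitz--Gibbs probability of $\psi$ is exactly $\mu_{f+sg}$. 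Replacing $\psi$ by its normalization $N(\psi)$ changes the potential by a coboundary plus a constant, hence leaves both $\mu_{f+sg}$ and every derivative $\beta^{(k)}(s)$ unchanged, so I may assume the base potential is normalized, i.e. $\mathcal{L}_f(1)=1$, $\lambda_f=1$, $w_f\equiv 1$ and $\mu_f=\rho_f$. Finally, since adding a constant $c$ to $g$ shifts $S_n g$ by $nc$ and $\beta'$ by $c$ while leaving the centred sum $S_n g - n\beta'$, the asymptotic variance, and $\beta''$ untouched, I may subtract the mean and assume $\int_\Omega g\, d\mu_f=0$; this is precisely the subtraction of $\beta'(s)$ appearing in \eqref{clt1a}.

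The key step is the twisted-operator identity. Since $S_n(f+tg)=S_n f + t\,S_n g$ along every orbit, iterating \eqref{ruop} gives $\mathcal{L}_{f+tg}^n(1)=\mathcal{L}_f^n\!\bigl(e^{t\,S_n g}\bigr)$, and, using $\mathcal{L}_f^*\mu_f=\mu_f$ a total of $n$ times,
$$\int_\Omega e^{t\,S_n g}\, d\mu_f=\int_\Omega \mathcal{L}_{f+tg}^n(1)\, d\mu_f .$$
Differentiating $\Lambda_n$ at $t=0$ and using $\int_\Omega S_n g\, d\mu_f=0$ (valid by $T$-invariance of $\mu_f$ after centring) yields $\Lambda_n'(0)=0$ and $\Lambda_n''(0)=\tfrac1n\int_\Omega (S_n g)^2\, d\mu_f$, which is exactly the asymptotic-variance expression in \eqref{clt1a}. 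It therefore remains to prove $\Lambda_n''(0)\to\beta''(0)$.

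For this I would pass to complex $z$ near $0$, where $\Lambda_n$ extends analytically through $e^{z\,S_n g}$. The map $z\mapsto\mathcal{L}_{f+zg}$ is complex analytic and, by the spectral gap recorded after \eqref{rpf2} together with the resolvent/projection formulas \eqref{rrer}--\eqref{rrer2}, for $z$ in a small disc the operator $\mathcal{L}_{f+zg}$ has a simple leading eigenvalue $\lambda_{f+zg}$, analytic in $z$, isolated from the rest of the spectrum, with analytic spectral projection $\pi_1(\mathcal{L}_{f+zg})$; hence
$$\mathcal{L}_{f+zg}^n=\lambda_{f+zg}^n\,\pi_1(\mathcal{L}_{f+zg})+R(z)^n,\qquad \limsup_n\bigl\|\lambda_{f+zg}^{-n}R(z)^n\bigr\|^{1/n}\le 1-\delta ,$$
uniformly on the disc. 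Writing $a(z):=\int_\Omega\pi_1(\mathcal{L}_{f+zg})(1)\, d\mu_f$, which is analytic with $a(0)=\int_\Omega 1\, d\mu_f=1\neq0$, I obtain
$$\Lambda_n(z)=\log\lambda_{f+zg}+\frac1n\log\Bigl(a(z)+\lambda_{f+zg}^{-n}\!\int_\Omega R(z)^n(1)\, d\mu_f\Bigr)\longrightarrow \log\lambda_{f+zg}=p(z)$$
uniformly on a neighbourhood of $0$. Since the $\Lambda_n$ and $p$ are analytic and the convergence is uniform on a complex disc, Cauchy's integral formula for the derivatives (Vitali/Weierstrass) forces $\Lambda_n^{(k)}(0)\to p^{(k)}(0)$ for every $k$; taking $k=2$ gives $\beta''(0)=p''(0)=\lim_n\Lambda_n''(0)=\sigma^2_{f}(g)$, which after undoing the reductions is the claimed identity.

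The main obstacle is the last step: establishing the \emph{uniform} spectral decomposition and gap for the complex perturbations $\mathcal{L}_{f+zg}$ on a full disc around $0$ (not merely the real statements quoted for real potentials), since only uniform convergence of the analytic functions $\Lambda_n$ legitimises passing the limit through two derivatives. This is exactly where the analytic perturbation theory behind Propositions~\ref{espectro2} and~\ref{espectro3}, together with the isolation of $\lambda_f$ coming from the spectral gap, are essential; by comparison the reductions, the twisting identity, and the elementary computation of $\Lambda_n''(0)$ are routine.
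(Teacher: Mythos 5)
Your argument is correct, but it follows a genuinely different route from the paper's. You compute the finite-horizon cumulant generating functions $\Lambda_n(t)=\tfrac1n\log\int_\Omega e^{tS_ng}\,d\mu_f$, identify $\Lambda_n''(0)$ with $\tfrac1n\int_\Omega (S_ng)^2\,d\mu_f$ via the twisting identity $\int_\Omega e^{tS_ng}\,d\mu_f=\int_\Omega\mathcal{L}_{f+tg}^n(1)\,d\mu_f$, and then pass the limit through two derivatives by proving \emph{uniform} convergence $\Lambda_n\to p$ on a complex disc from the spectral decomposition $\mathcal{L}_{f+zg}^n=\lambda_{f+zg}^n\pi_1(\mathcal{L}_{f+zg})+R(z)^n$ and Weierstrass/Vitali. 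This is the Nagaev--Guivarc'h scheme; its cost is exactly what you flag, namely that Propositions~\ref{espectro2}--\ref{espectro3} are stated for real Lipschitz perturbations, so you must extend the isolated-simple-eigenvalue decomposition, with a uniform bound on $\|R(z)^n\|$, to complex $z$ (standard via Kato perturbation theory and a resolvent integral over a fixed circle, but it is an extra layer the paper does not invoke here). The paper instead differentiates the eigenvalue identity $\mathcal{L}_{f+tg}^n(w_{f+tg})=e^{np(t)}w_{f+tg}$ twice at $t=0$, integrates against $\mu_f$, and obtains
$np''(0)=\int_\Omega (S_ng)^2\,d\mu_f+\tfrac{2}{1}\int_\Omega (S_ng)\cdot\bigl(\tfrac{d}{dt}w_{f+tg}\bigr)|_{t=0}\,d\mu_f$, killing the cross term after dividing by $n$ via the Birkhoff ergodic theorem and dominated convergence. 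That route stays entirely with real Gateaux derivatives and, as a by-product, yields the closed formula $\sigma_f^2(g)=\int_\Omega g^2\,d\mu_f+2\int_\Omega g\cdot(\tfrac{d}{dt}w_{f+tg})|_{t=0}\,d\mu_f$ of Corollary~\ref{cltav2}, which your argument does not produce directly; conversely, your uniform complex convergence is essentially the engine later reused in Theorem~\ref{kler}, so your approach front-loads work that pays off in the CLT itself. Your preliminary reductions (normalization, centring, and the shift $r=t-s$ to handle general $s$) coincide with the paper's.
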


\begin{proof}

In order to simplify the proof we consider the case where $f \in \mathcal{N}(\Omega)$, i.e., when $\mathcal{L}_f(1)=1$. The above implies $w_f \equiv 1$, $P(f) = 0$ and so $\beta (t) = p(t)$. Observe that $\beta^\prime(0) = p^\prime(0) = \int_\Omega g d\mu_f = 0$; we point out that the hypothesis $\int_\Omega g d\mu_f=0$ is   satisfied by tangent vectors to the infinite-dimensional manifold of Lipschitz-Gibbs probabilities, at the point $\mu_f$ (see Theorem \ref{faef7})).

In this case, we have to show that
\begin{equation} \label{clt1r} \beta^{\prime \prime}  (0) = \lim_{n \to \infty} \frac{1}{n}\,\int_\Omega \Bigl(\,\sum_{j=0}^{n-1}  g \,\circ \sigma^j\,\Bigr)^2\, d \mu_{f} \;.
\end{equation}

Indeed, in the same way, such as appears in the last result, we assume
\begin{equation} \label{lulu767}
\mathcal{L}_{f + tg} (w_{f + tg}) = e^{ p(t)} \, w_{f + tg},
\end{equation}
and  $\beta^\prime(0) = p^\prime(0) = \int g d\mu_f = 0$. 

First, note that the derivative at the left side in \eqref{lulu767} satisfies
\begin{align*} \frac{d}{dt} \mathcal{L}_{f + tg} (w_{f + tg})(x)|_{t=0} 
=& \int_M e^{ f (ax)} \,g(ax) d\nu(a) \\
&+ \int_M  e^{ f(ax)} \Bigl(\frac{d}{dt} w_{f + tg} (ax)\Bigr)|_{t=0} d\nu(a) \;.
\end{align*}

In order to estimate $\beta^{\prime \prime}  (0)$, for a fixed $x \in \Omega$, we have to take the derivative
\begin{equation} \label{lulu9}  
\begin{split}
\frac{d^2}{dt^2} \mathcal{L}_{f + tg} (w_{f + tg})(x)|_{t=0} 
=& \frac{d}{dt} \Bigl(\, \int_M  e^{(f + tg) (ax)} \,g(ax) \,w_{f + tg}(ax)d\nu(a) \\
&+ \int_M e^{(f + tg)(ax)}  \frac{d}{dt}  w_{f + tg}(ax)d\nu(a)\Bigr)|_{t=0} \;.
\end{split}
\end{equation}

We consider initially   the first term of \eqref{lulu9}, for a fixed $x \in \Omega$
\begin{align*}
&\frac{d}{dt} \, \Bigl(\int_M  e^{(f + tg) (ax)} \,g(ax) \,w_{f + tg}(ax)d\nu(a)\,\Bigr)|_{t=0} \\
=&\, \int_M  e^{(f + tg) (ax)} \,g(ax)^2 \,w_{f + tg}(ax)d\nu(a)|_{t=0} \\
&+ \, \int_M e^{(f + tg) (ax)} \,g(ax) \,\frac{d}{dt}\,w_{f + tg} (ax)d\nu(a) |_{t=0} \\
=& \, \int_M  e^{f(ax)} \,g(ax)^2 \,d\nu(a) + \int_M e^{f (ax)} \,g(ax) \,\Bigl(\frac{d}{dt}\,w_{f + tg} (ax)\Bigr) |_{t=0} d\nu(a) \\
=& \; \mathcal{L}_ f ( g^2) (x) + \mathcal{L}_ f (g \,  \frac{d}{dt}\,w_{f+tg}|_{t=0})(x) \;.
\end{align*}

Then, integrating w.r.t. $\mu_f$, we have
\begin{equation} \label{lulu181}
\begin{split}
&\int_\Omega \frac{d}{dt} \, \Bigl(\int_M  e^{(f + tg) (ax)} \,g(ax) \,w_{f + tg}(ax)d\nu(a)\,\Bigr)|_{t=0}\, d\mu_f(x) \\
=& \int_\Omega \mathcal{L}_ f ( g^2) d\mu_f\,+ \int_\Omega \mathcal{L}_ f (g \,  \frac{d}{dt}\,w_{f+tg}|_{t=0}) \,d\mu_f \\
=& \int_\Omega g^2 d\mu_f + \int_\Omega g \,.\,\Bigl( \frac{d}{dt}\,w_{f + tg}\Bigr)|_{t=0} \, d\mu_f \;.
\end{split}
\end{equation}

Now we estimate the second term of \eqref{lulu9}, also for a fixed $x \in \Omega$
\begin{align*}
&\frac{d}{dt} \, \Bigl(\int_M e^{(f + tg)(ax)}  \frac{d}{dt}  w_{f + tg}(ax)d\nu(a)\Bigr)|_{t=0} \\
=& \, \int_M  e^{(f + tg) (ax)} \,g(ax) \,\frac{d}{dt}  w_{f + tg} (ax) d\nu(a)|_{t=0} \\
&+ \int_M e^{(f + tg) (ax)}  \, \frac{d^2}{dt^2} w_{f + tg} (ax) d\nu(a)|_{t=0} \\
=& \; \mathcal{L}_{ f}\, (\,g \, \frac{d}{dt}  w_{f + tg} |_{t=0}\,)(x) + \mathcal{L}_{ f}\, (\frac{d^2}{dt^2}  w_{f + tg}|_{t=0}) (x) \;.
\end{align*}

Therefore, integrating w.r.t. the measure $\mu_f$, it follows that
\begin{equation} \label{lulu190} 
\begin{split}
& \int_\Omega \,  \frac{d}{dt} \, \Bigl(\int_M e^{(f + tg)(ax)}  \frac{d}{dt}  w_{f + tg}(ax)d\nu(a)\Bigr)|_{t=0} \, d\mu_f (x) \\
=& \int_\Omega \mathcal{L}_{ f}\, (\,g \, \frac{d}{dt}  w_{f + tg} |_{t=0}\,) \, d\mu_f + \int_\Omega \mathcal{L}_{ f}\, (\frac{d^2}{dt^2}  w_{f + tg}|_{t=0}) \,d\mu_f \\
=& \int_\Omega \,g \,.\, \Bigl(\frac{d}{dt}  w_{f + tg}\Bigr) |_{t=0}\, d\mu_f + \int_\Omega \frac{d^2}{dt^2}  w_{f + tg}|_{t=0} \,d\mu_f \;.
\end{split}
\end{equation}

Therefore, from \eqref{lulu181} and \eqref{lulu190} we get
\begin{equation} \label{lulu943}
\begin{split}
\int_\Omega \frac{d^2}{dt^2} \mathcal{L}_{f + tg} (w_{f + tg})|_{t=0} d\mu_f 
=& \int_\Omega g^2 d\mu_f + 2\, \int_\Omega \,g \,.\, \Bigl(\frac{d}{dt}  w_{f + tg}\Bigr) |_{t=0}\, d\mu_f \\
&+ \int_\Omega \frac{d^2}{dt^2}  w_{f + tg}|_{t=0} \,d\mu_f \;.
\end{split}
\end{equation}

On the other hand, since we are assuming $\beta^\prime(0) = p^\prime(0) = \int g d\mu_f = 0$, for each $x$ the following holds
\begin{align*}
& \frac{d^2}{dt^2} (e^{ p(t)} \, w_{f +tg} (x)) |_{t=0} =  \frac{d}{dt} \,\Bigl( \frac{d}{dt} (e^{ p(t)}) \, w_{f + tg} (x)+ e^{ p(t)}\frac{d}{dt}  w_{f + tg}(x)\Bigr)|_{t=0} \\
=& \,\frac{d^2}{dt^2} e^{ p(t)}|_{t=0} w_f(x) + 2\frac{d}{dt} e^{ p(t)}|_{t=0} \frac{d}{dt} w_{f + tg} (x) |_{t=0} + e^{ p(0)}\,\frac{d^2}{dt^2}  w_{f + tg}(x)|_{t=0} \\
=& \,\frac{d^2}{dt^2} e^{ p(t)}|_{t=0} + 2\int_\Omega g d\mu_f \frac{d}{dt} w_{f + tg} (x) |_{t=0} + \frac{d^2}{dt^2}  w_{f + tg}(x)|_{t=0} \\ 
=&  \,\frac{d^2}{dt^2} e^{ p(t)}|_{t=0} + \frac{d^2}{dt^2}  w_{f + tg}(x)|_{t=0} \;.
\end{align*}

Then,
\begin{equation} \label{lulu400}
\begin{split}
&\int_\Omega \frac{d^2}{dt^2} (e^{ p(t)} \, w_{f +tg}) |_{t=0} \, d\mu_f \\
=& \, \int_\Omega   \,\frac{d^2}{dt^2} e^{ p(t)}|_{t=0}\, d\mu_f   +\,\int_\Omega \,\frac{d^2}{dt^2}  w_{f + tg}|_{t=0} \, d\mu_f \\
=& \,\frac{d^2}{dt^2} e^{ p(t)}|_{t=0}    + \,\int_\Omega \,\frac{d^2}{dt^2}  w_{f + tg}|_{t=0} \, d\mu_f \;.
\end{split}
\end{equation}

Besides that, since $f \in \mathcal{N}(\Omega)$, we have
$$\frac{d^2}{dt^2}e^{p(t)}|_{t=0} = p^{\prime \prime}(0) + (p^{\prime}(0))^2 =  p^{\prime \prime}(0) \;.$$

Therefore, from  \eqref{lulu943}, \eqref{lulu400}, and the assumption $\beta^\prime(0) = p^\prime(0) = \int_\Omega g d\mu_f = 0$, we get that
\begin{equation} \label{lulu401}  p^{\prime\prime}(0) = \,\frac{d^2}{dt^2} e^{ p(t)}|_{t=0} =  \int_\Omega g^2 d\mu_f + 2\, \int_\Omega \,g \,.\, \Bigl(\frac{d}{dt}  w_{f + tg}\Bigr) |_{t=0}\, d\mu_f \;.
\end{equation}

Furthermore, given $n \in \mathbb{N}$, following the same reasoning as above, but now for the expression
$$ \mathcal{L}_{ f + tg}^n (w_{f + tg}) = e^{ n\, p(t)} \, w_{t + fg} \;,$$
we get that
\begin{equation} \label{lulu402} 
\begin{split}
&n\,p^{\prime \prime}(0) = \,n\,\frac{d^2}{dt^2} e^{ p(t)}|_{t=0} \\
=& \int_\Omega  \Bigl(\sum_{j=0}^{n-1}\,g \circ  \sigma^j\Bigr)^2 d\mu_f + 2\, \int_\Omega  \Bigl(\,\sum_{j=0}^{n-1} g \circ  \sigma^j\Bigr) \,.\, \Bigl( \frac{d}{dt}\,w_{f + tg}\Bigr)|_{t=0} \, d\mu_f \;.
\end{split}
\end{equation}

From \eqref{lulu402} it follows that for all $n \in \mathbb{N}$ the following holds
\begin{equation} \label{lulu403}
\begin{split}
&p^{\prime \prime}(0) = \,\frac{d^2}{dt^2} e^{ p(t)}|_{t=0} \\ 
=& \frac{1}{n}\int_\Omega \Bigl(\sum_{j=0}^{n-1}\,g \circ  \sigma^j\Bigr)^2 d\mu_f + \frac{2}{n}\, \int_\Omega \Bigl(\,\sum_{j=0}^{n-1}g \circ  \sigma^j\Bigr) \,.\, \Bigl( \frac{d}{dt}\,w_{f + tg}\Bigr)|_{t=0} \, d\mu_f \;.
\end{split}
\end{equation}

As $\int_\Omega g d\mu_f =0$ and $\frac{d}{dt}\,w_{f + tg}(\cdot)|_{t=0}$ is a bounded function on $\Omega$, it follows from the Ergodic Theorem (and Dominated Convergence Theorem) that
\begin{equation} \label{lulu404} \beta^{\prime \prime} (0) = p^{\prime \prime} (0) = \frac{d^2}{dt^2} e^{ p(t)}|_{t=0} =
\lim_{n \to \infty} \frac{1}{n}  \int_\Omega  \Bigl(\sum_{j=0}^{n-1}\,g \circ  \sigma^j\Bigr)^2 d\mu_f \;.
\end{equation}

Now, taking $r := t-s$, $\psi := f + sg$ and calculating $\frac{d^2}{dr^2} e^{ p(r)}|_{r=0}$, we obtain that
$$
\beta^{\prime \prime}(s) = p^{\prime \prime} (s) = \lim_{n \to \infty} \frac{1}{n}\,\int_\Omega \Bigl(\,\sum_{j=0}^{n-1} \, (\,g \circ \sigma^j - \beta^{\prime}(s)\,) \,\Bigr)^2\, d \mu_{f+ s g}
$$
 
\end{proof}

The former Theorem implies the following:

\begin{corollary}\label{cltav2} Consider $\beta (t) := P(f + tg)- P(f)$, $t \in \mathbb{R}$, where $f, g \in \mathrm{Lip}(\Omega)$ and $\int_\Omega g d\mu_f = 0$. Then,
\begin{equation} \label{lulu4015}
\begin{split}
\sigma^2_f (g) &:=  \beta^{\prime \prime} (0)   = \lim_{n \to \infty} \frac{1}{n}\,\int_\Omega \,\Bigl(\sum_{j=0}^{n-1}  g \,\circ \sigma^j \Bigr)^2\, d \mu_{f} \\ 
=& \int_\Omega g^2 d\mu_f + 2\, \int_\Omega g \,.\, \Bigl( \frac{d}{dt}\,w_{f + tg}\Bigr)|_{t=0} \, d\mu_f \;.
\end{split}
\end{equation}
\end{corollary}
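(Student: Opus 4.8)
The plan is to obtain this statement as an immediate consequence of Lemma \ref{cltav1}, specialized to $s = 0$, by reading off the intermediate identities already established in its proof. Concretely, the three quantities appearing in \eqref{lulu4015} are linked as follows: the first equality $\sigma^2_f(g) := \beta^{\prime\prime}(0)$ is simply the definition of the asymptotic variance; the second equality, that $\beta^{\prime\prime}(0)$ equals the Ces\`aro limit $\lim_{n\to\infty}\frac1n\int_\Omega(\sum_{j=0}^{n-1} g\circ\sigma^j)^2\,d\mu_f$, is exactly Lemma \ref{cltav1} evaluated at $s=0$ (equation \eqref{lulu404}); and the third equality, the closed form $\int_\Omega g^2\,d\mu_f + 2\int_\Omega g\cdot(\frac{d}{dt}w_{f+tg})|_{t=0}\,d\mu_f$, is precisely the identity \eqref{lulu401} derived en route to proving that Lemma. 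Thus the Corollary is in essence a repackaging of \eqref{lulu401} and \eqref{lulu404} as a single chain of identities.

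First I would justify that the centering term drops out at $s=0$. By Lemma \ref{late}, the hypothesis $\int_\Omega g\,d\mu_f = 0$ is equivalent to $\beta^\prime(0) = p^\prime(0) = 0$, so the centered sum $\sum_{j=0}^{n-1}(g\circ\sigma^j - \beta^\prime(0))$ appearing in \eqref{clt1a} collapses to $\sum_{j=0}^{n-1} g\circ\sigma^j$. This is what legitimizes the uncentered limit in \eqref{lulu4015}, and it also identifies the hypothesis with the tangency condition $g \in \mathrm{Ker}(\mathcal{L}_f)$ from Theorem \ref{faef7}. With the centering gone, \eqref{lulu404} supplies $\beta^{\prime\prime}(0)$ as the middle expression, while the substantive analytic input for the closed form, namely the double differentiation of the eigenvalue equation $\mathcal{L}_{f+tg}(w_{f+tg}) = e^{p(t)}w_{f+tg}$ followed by integration against $\mu_f$, is already carried out in the proof of Lemma \ref{cltav1} and yields \eqref{lulu401} directly.

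The one point requiring care is that Lemma \ref{cltav1} was proved under the simplifying normalization $f \in \mathcal{N}(\Omega)$, whereas here $f \in \mathrm{Lip}(\Omega)$ is arbitrary. To bridge this I would observe that $\beta(t) = P(f+tg) - P(f)$ and the Lipschitz-Gibbs probability $\mu_f$ are both invariant under replacing $f$ by its normalization $N(f) = f + \log(w_f) - \log(w_f\circ\sigma) - \log(\lambda_f)$: indeed $N(f)+tg$ differs from $f+tg$ by the coboundary $\log(w_f) - \log(w_f\circ\sigma)$ together with the constant $-\log(\lambda_f) = -P(f)$, and since pressure is unchanged under coboundaries one gets $P(N(f)+tg) = P(f+tg) - P(f) = \beta(t)$ and $\mu_{N(f)} = \mu_f$. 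Consequently $\beta^{\prime\prime}(0)$ and every integral against $\mu_f$ in \eqref{lulu4015} are unaffected; passing to the normalized representative we have $w_f \equiv 1$, the identity $\int_\Omega \mathcal{L}_f(\phi)\,d\mu_f = \int_\Omega \phi\,d\mu_f$ used in deriving \eqref{lulu401} becomes available, and the normalized computation of Lemma \ref{cltav1} applies verbatim.

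I expect this normalization bookkeeping to be the main (and essentially only) obstacle, since all the genuine work, the two successive differentiations and the passage to the limit via the Ergodic and Dominated Convergence Theorems, is already contained in Lemma \ref{cltav1}. The remaining task is purely organizational: to assemble \eqref{lulu401} and \eqref{lulu404}, confirm under the hypothesis $\int_\Omega g\,d\mu_f = 0$ that the centering vanishes, and record the resulting identities as the stated formula \eqref{lulu4015}.
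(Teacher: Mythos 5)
Your proposal matches the paper's treatment: the paper offers no separate proof, simply asserting that the Corollary follows from Lemma \ref{cltav1}, i.e., from the identities \eqref{lulu401} and \eqref{lulu404} specialized at $s=0$, which is exactly how you assemble it. Your normalization bookkeeping is a useful addition the paper omits, though note that after replacing $f$ by $N(f)$ the relevant eigenfunction derivative becomes $\frac{d}{dt}\bigl(w_{f+tg}/w_f\bigr)|_{t=0}$ rather than $\frac{d}{dt}w_{f+tg}|_{t=0}$, so the closed-form third equality in \eqref{lulu4015} should strictly be read with respect to the normalized family (a point the paper itself glosses over by stating the Corollary for general $f\in\mathrm{Lip}(\Omega)$).
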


The value $ \beta^{\prime \prime}  (s)$ stated at Lemma \ref{cltav1} is called the {\em asymptotic variance} for the Lipschitz-Gibbs probability $\mu_{f + sg}$. This value is the one to be used as a variance of the Gaussian distribution obtained  in the central limit theorem for the Lipschitz-Gibbs probability $\mu_{f}$ (see \cite{PP}, \cite{CoPa} or \cite{Chu}). Our proof adapts  the reasoning used in  \cite{Lal1} for our setting.

\begin{remark}\label{falfa} Note that in the case when $g$ is co-boundary to a constant  $c \in \mathbb{R}$, i.e., there exists $v \in \mathrm{C}(\Omega)$ such that $g = v - v \circ \sigma + c$. Then,  for any $s \in \mathbb{R}$, we have $p^\prime(s) = c$ and, therefore,  for any $s \in \mathbb{R}$, it follows that $p^{\prime \prime}(s) = 0$. In Proposition 4.2 in \cite{PP} it was shown the converse, i.e., when, $p^{\prime \prime}(s) = 0$ for all $s \in \mathbb{R}$, then, $g$ is co-boundary to a constant.
\end{remark}

We will assume from now on that $g \in \mathrm{Lip}(\Omega)$ and not co-boundary to a constant. In this case, it follows from Remark \ref{falfa} that the map $t \mapsto P( f + t g)$ is strictly convex. 

Before presenting the main theorem of this paper, we will introduce the following well-known technical result which allows us to prove properties of convergence of a family of probability distributions using the point-wise convergence of their corresponding moment generation functions (for more details see Theorem 2 in section XIII in \cite{Feller}).

\begin{theorem} {(Continuity Thm \cite{Feller,Miller})} \label{cthm}
Consider a sequence of Borel probability densities $(\rho_n)_{n = 1}^\infty$ on $\mathbb{R}$ and its corresponding {\em moment generation functions}  given by $\varphi_n (z) := \int_{\mathbb{R}} e^{\,z \,t} d \rho_n (t)$. If for each value $z \in \mathbb{R}$  we have 
$$\lim_{n \to \infty} \varphi_n(z) = \varphi(z) :=\, \int_{\mathbb{R}} e^{\,z \,t} d \phi_{\alpha,\sigma^2} (t) = e^{z\alpha + z^2\frac{ \sigma^2}{2}}\;.$$ 
That is, $\varphi$ is the moment generation function of the Gaussian density $\phi_{\alpha,\sigma^2}$ on $\mathbb{R}$, then, the sequence of probability densities $(\rho_n)_{n = 1}^\infty$ converges weakly to the Gaussian probability $\phi_{\alpha,\sigma^2}$ when $n\to \infty$.
\end{theorem}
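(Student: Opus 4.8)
The plan is to prove this as the Laplace-transform analogue of L\'evy's continuity theorem: I will first establish that the family $(\rho_n)_{n=1}^\infty$ is tight, then show that every weakly convergent subsequence has the Gaussian $\phi_{\alpha,\sigma^2}$ as its limit, and finally invoke the fact that tightness together with a unique subsequential limit forces weak convergence of the whole sequence. The pointwise hypothesis $\varphi_n(z) \to \varphi(z)$ for \emph{every} $z \in \mathbb{R}$ is quite strong, and the proof exploits it at several auxiliary values of $z$, not merely in the stated limit.

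For tightness I would use the convergence at $z = 1$ and $z = -1$. Since $\varphi_n(1) \to \varphi(1)$ and $\varphi_n(-1) \to \varphi(-1)$, both sequences $\int_\mathbb{R} e^{t}\, d\rho_n(t)$ and $\int_\mathbb{R} e^{-t}\, d\rho_n(t)$ are bounded, say by a constant $C$. A Chernoff--Markov estimate then gives, for every $K > 0$,
\[\rho_n\bigl(\{t : t \ge K\}\bigr) \le e^{-K}\!\int_\mathbb{R} e^{t}\, d\rho_n(t) \le C e^{-K},\]
and symmetrically for the left tail using $z = -1$. Hence $\sup_n \rho_n(\{|t| \ge K\}) \to 0$ as $K \to \infty$, which is exactly tightness.

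By Prokhorov's theorem, tightness implies that every subsequence of $(\rho_n)$ admits a further subsequence $\rho_{n_k}$ converging weakly to some Borel probability measure $\mu$ on $\mathbb{R}$. To identify $\mu$, fix $z \in \mathbb{R}$ and observe that $\int_\mathbb{R} e^{2zt}\, d\rho_{n_k}(t) = \varphi_{n_k}(2z)$ is bounded, since it converges to $\varphi(2z)$; this bound yields uniform integrability of the family $\{e^{zt}\}$ against $\rho_{n_k}$, so weak convergence upgrades to
\[\int_\mathbb{R} e^{zt}\, d\mu(t) = \lim_{k \to \infty} \int_\mathbb{R} e^{zt}\, d\rho_{n_k}(t) = \varphi(z) = e^{z\alpha + z^2\frac{\sigma^2}{2}}.\]
Thus the moment generation function of $\mu$ coincides with that of $\phi_{\alpha,\sigma^2}$ on all of $\mathbb{R}$. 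Since a moment generation function that is finite on a neighbourhood of the origin determines the distribution uniquely, we conclude $\mu = \phi_{\alpha,\sigma^2}$. As every subsequential weak limit equals this same Gaussian, tightness forces $\rho_n \to \phi_{\alpha,\sigma^2}$ weakly.

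The main obstacle is the upgrade in the previous step from weak convergence (which only tests against bounded continuous functions) to convergence of the \emph{unbounded} exponential moments $\int_\mathbb{R} e^{zt}\, d\rho_{n_k}(t)$; this is precisely where the hypothesis of convergence at the doubled point $2z$ enters, via uniform integrability, and it is what makes the mere pointwise limit of $\varphi_n$ usable. An alternative route would bypass tightness by analytically continuing $z \mapsto \varphi_n(z)$ off the real axis to obtain convergence of the characteristic functions $s \mapsto \varphi_n(is)$ and then apply L\'evy's continuity theorem directly; I find the tightness argument cleaner, since the uniqueness theorem for Laplace transforms can be quoted as a black box.
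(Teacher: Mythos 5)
Your proof is correct. Note, however, that the paper does not prove this statement at all: it is imported verbatim as a known result, with the reader referred to Theorem 2 of Section XIII in Feller's book (and to Miller's notes), so there is no internal argument to compare yours against. What you have written is a complete and standard proof of that cited result (essentially Curtiss' theorem), and each step holds up: the Chernoff--Markov bound at $z=\pm 1$ does give $\sup_n \rho_n(\{|t|\ge K\})\le 2Ce^{-K}$ and hence tightness; Prokhorov then yields subsequential weak limits; the truncation argument using boundedness of $\varphi_{n_k}(2z)$ correctly upgrades weak convergence to convergence of the unbounded exponential moments (with $\int e^{zt}d\mu<\infty$ obtained by monotone convergence on the truncations); and the uniqueness of a distribution with moment generating function finite near the origin is a legitimate black box. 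You correctly identify the one genuinely delicate point, namely that weak convergence alone does not control $\int e^{zt}\,d\rho_{n_k}$, and you resolve it by exploiting the hypothesis at the doubled argument $2z$ --- which is exactly why pointwise convergence on all of $\mathbb{R}$, rather than on an interval around $0$ only, makes the argument painless. The only cosmetic remark is that the $\rho_n$ are measures rather than densities, but you have merely inherited the paper's terminology.
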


\begin{remark}\label{rclt}
The former theorem implies that for, either a simple function of the form $u := \sum_{j=1}^k a_j 1_{A_j}$ or a function $u \in \mathrm{C}(\mathbb{R})$, we have
$$\lim_{n \to \infty} \int_\Omega u(t) d\rho_n(t) = \int_\Omega u(t) d\phi_{\alpha,\sigma^2}(t) \;.$$
\end{remark}

Now we are able to state the main result of this section, which is a very generalized version of the CLT.

\begin{theorem} \label{kler}
Denote by $\phi_{\alpha,\sigma^2}$ the Gaussian density on $\mathbb{R}$ with mean $\alpha$ and variance $\sigma^2$, consider $\beta (t) := P(f + tg)- P(f)$, so, $\beta^\prime(0)$ and $\sigma^2 = \beta^{\prime \prime}(0) $. Let $u$ be, either a simple function $u := \sum_{j=1}^k a_j 1_{A_j}$  or a function $u \in \mathrm{C}(\mathbb{R})$. Then, we have
 \begin{equation} \label{clt2}
 \lim_{n \to \infty} \int_\Omega u\Bigl(\,\, \frac{1}{\sqrt{n}}\,\sum_{j=0}^{n-1}  (\,g \circ \sigma^j - \beta^{\prime}(0)\,) \,\Bigr)\,\, d \mu_f = \int_{\mathbb{R}} u(t)\,\, d \phi_{0,\beta^{\prime \prime}(0)} (t) \;.
 \end{equation}
\end{theorem}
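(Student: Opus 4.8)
The plan is to prove weak convergence of the laws of the normalized Birkhoff sums by verifying convergence of their moment generating functions, and then to invoke the Continuity Theorem (Theorem \ref{cthm}) together with Remark \ref{rclt}. Concretely, for each $n$ let $\rho_n$ denote the push-forward of $\mu_f$ under the map $S_n := \frac{1}{\sqrt n}\sum_{j=0}^{n-1}(g\circ\sigma^j - \beta'(0))$, so that the left-hand side of \eqref{clt2} is exactly $\int_{\mathbb R} u\, d\rho_n$. Since $\mu_f$ and the function $\beta$ are unchanged when $f$ is replaced by its normalization $N(f)$, I would assume from the start that $f\in\mathcal N(\Omega)$, hence $w_f\equiv 1$, $P(f)=0$ and $\mathcal L_f^*\mu_f=\mu_f$. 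Writing $\bar g := g-\beta'(0)$, Lemma \ref{late} gives $\int_\Omega\bar g\,d\mu_f=0$, and $S_n=\frac1{\sqrt n}\sum_{j=0}^{n-1}\bar g\circ\sigma^j$. It then suffices to show, for every fixed $z\in\mathbb R$, that the moment generating function $\varphi_n(z):=\int_\Omega e^{z S_n}\,d\mu_f$ converges to $e^{z^2\beta''(0)/2}$, i.e.\ the Gaussian MGF with mean $0$ and variance $\sigma^2=\beta''(0)$.

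The bridge between $\varphi_n$ and the Ruelle operator is the elementary identity $\mathcal L_{f+t\bar g}(w)=\mathcal L_f(e^{t\bar g}w)$, which upon iteration yields
$$\mathcal L_{f+t\bar g}^{\,n}(1)(x)=\mathcal L_f^{\,n}\Bigl(e^{\,t\sum_{j=0}^{n-1}\bar g\circ\sigma^j}\Bigr)(x),$$
since the factor $\bar g$ picked up at the $k$-th step becomes $\bar g\circ\sigma^{\,k}$ along the corresponding preimage branch. Integrating against $\mu_f$ and using $\mathcal L_f^*\mu_f=\mu_f$ exactly $n$ times collapses the outer transfer operator, giving
$$\varphi_n(z)=\int_\Omega e^{\,t_n\sum_{j=0}^{n-1}\bar g\circ\sigma^j}\,d\mu_f=\int_\Omega \mathcal L_{f+t_n\bar g}^{\,n}(1)\,d\mu_f,\qquad t_n:=\frac{z}{\sqrt n}.$$
Thus the whole problem is reduced to the asymptotics of the iterated twisted operator $\mathcal L_{f+t_n\bar g}^{\,n}$ applied to the constant function $1$, at a perturbation parameter $t_n$ shrinking like $n^{-1/2}$.

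Next I would insert the spectral decomposition coming from the RPF theorem \eqref{rpf1}--\eqref{rpf2} and the analytic projection $\pi_1(\mathcal L_{f+t\bar g})$ of Proposition \ref{espectro2}. For $t$ in a neighborhood of $0$ one has
$$\lambda_{f+t\bar g}^{-n}\,\mathcal L_{f+t\bar g}^{\,n}(1)=w_{f+t\bar g}\Bigl(\int_\Omega 1\,d\rho_{f+t\bar g}\Bigr)+E_n(t),$$
with $\|E_n(t)\|_\infty\le C\theta^{\,n}$ for some $\theta\in(0,1)$ coming from the spectral gap. Writing $p(t):=P(f+t\bar g)$, so that $\lambda_{f+t\bar g}=e^{p(t)}$, the leading factor is $e^{n\,p(t_n)}$. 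The pressure $t\mapsto p(t)$ is analytic (Proposition \ref{espectro3}) with $p(0)=0$, while $p'(0)=\int_\Omega\bar g\,d\mu_f=0$ by Lemma \ref{late} and $p''(0)=\beta''(0)=\sigma^2$ by Lemma \ref{cltav1} and Corollary \ref{cltav2}. A second-order Taylor expansion then gives
$$n\,p(t_n)=n\Bigl(\tfrac12\sigma^2 t_n^2+o(t_n^2)\Bigr)=\tfrac12\sigma^2 z^2+o(1)\;\longrightarrow\;\tfrac12\sigma^2 z^2,$$
so $e^{n\,p(t_n)}\to e^{\sigma^2 z^2/2}$. Combined with $w_{f+t_n\bar g}\to w_f\equiv1$ and $\rho_{f+t_n\bar g}\to\rho_f$ (continuity of the eigendata), so that $\int_\Omega w_{f+t_n\bar g}\,d\mu_f\to1$ and $\int_\Omega E_n(t_n)\,d\mu_f\to0$, this yields $\varphi_n(z)\to e^{\sigma^2 z^2/2}$; Theorem \ref{cthm} and Remark \ref{rclt} then deliver \eqref{clt2} for both classes of test functions $u$.

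The step I expect to be the main obstacle is justifying the spectral decomposition \emph{uniformly} in the shrinking parameter $t_n$: the gap estimate $\|E_n(t)\|_\infty\le C\theta^n$ and the constants $C,\theta$ must be controlled on a fixed neighborhood of $t=0$, so that they remain valid along the moving family $t=t_n\to0$. This uniformity is precisely what the analyticity of $t\mapsto\mathcal L_{f+t\bar g}$ and of the projection $\pi_1(\mathcal L_{f+t\bar g})$ (Propositions \ref{espectro2} and \ref{espectro3}), together with the persistence of the spectral gap under small perturbations, are meant to supply; I would also need the $o(t^2)$ remainder in the Taylor expansion of $p$ to be uniform, so that $n\cdot o(t_n^2)\to0$. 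Once these uniform bounds are secured, every limit above is routine.
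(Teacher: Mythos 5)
Your proposal follows essentially the same route as the paper's proof: compute the moment generating function of the normalized Birkhoff sums, rewrite it via the twisted-operator identity as $\lambda_{f+zg/\sqrt n}^{-n}\,\mathcal{L}^n_{f+zg/\sqrt n}$ applied to the eigenfunction and integrated against $\rho_f$, use the spectral-gap convergence \eqref{rpf2} together with the second-order Taylor expansion of the pressure to obtain the Gaussian moment generating function $e^{z^2\beta''(0)/2}$, and conclude with Theorem \ref{cthm} and Remark \ref{rclt}. The uniformity issue you flag --- applying the spectral decomposition along the shrinking family $t_n=z/\sqrt n$ rather than at a fixed potential --- is genuine and is in fact passed over silently in the paper's equation \eqref{super134}, so your explicit attention to it is a point in your favor rather than a defect.
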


\begin{proof}
It is well-known that the moment generation function for the Gaussian distribution $\phi_{0,\beta^{\prime \prime}(0)}$ with mean $0$ and variance $\beta^{\prime \prime}(0)$ is given by the expression
$$\varphi(z) :=\, \int_{\mathbb{R}} e^{\,z \,t} d \phi_{0,\beta^{\prime \prime}(0)} (t) = e^{z^2\frac{\beta^{\prime \prime}(0)}{2}} \;.$$

For each $n \in \mathbb{N}$, denote by $\rho_n$ the Borel probability measure on $\mathbb{R}$, such that, the interval $(a,b)$ has $\rho_n$-measure equal to
$$ \rho_n (a, b) := \mu_f \Bigl( \Bigl\{\, x \in \Omega\,|\, \frac{1}{\sqrt{n}}\,\sum_{j=0}^{n-1}  (\,g(\sigma^j(x)) - \beta^\prime (0)\,) \,\in (a,b)\, \Bigr\}\,\Bigr) \,.$$

Let $\varphi_n$ be the corresponding moment generation function associated with the probability distribution $\rho_n$, i.e., the one satisfying for each $z \in \mathbb{R}$
$$\varphi_n(z) = \int_{\mathbb{R}} e^{\,z \,t} d \rho_n(t) \;.$$

We will show that for $z$ fixed, we get that 
$$\lim_{n \to \infty} \varphi_n(z) = \varphi(z) := \, e^{z^2 \, \frac{\beta^{\prime \prime}(0)}{2}}\;,$$ that is, the map $\varphi$ is the moment generation function of $\phi_{0,\beta^{\prime \prime}(0)}$.

At the first, consider the Taylor's polynomial of order $2$ at zero for the map $\beta(t)$, which is given by
 \begin{equation} \label{Tay2} \beta(t) := \beta^{\prime}(0) \, t +  \frac{1}{2} \beta^{\prime \prime}(0) \, t^2 + \, o(t^3)\;,
 \end{equation}
 where $\beta^\prime (0)=\int_\Omega g d \mu_f $   and $\beta^{\prime \prime}(0) $ is given by \eqref{lulu4015}. 
 
 Besides that, assume that $\int_\Omega w_{f + tg} d \rho_{f +tg}=1$, for all $t \in \mathbb{R}$, where $w_{f + tg}$  and $\rho_{f + tg}$ satisfy the expressions appearing in \eqref{rpf1}. 
 
Observe that $\rho_n = (\gamma_n)_* \mu_f$, with $\gamma_n := \frac{1}{\sqrt{n}}\,\sum_{j=0}^{n-1}  (\,g \circ \sigma^j - \beta^\prime (0)\,)$. Then, the moment generating function for the Borel probability measure $\rho_n$ is given by the expression

\begin{equation} \label{super1}
\begin{split}
\varphi_n (z) 
&= \int_\mathbb{R} e^{\,z t} d \rho_n (t) \\
&= \int_\Omega e^{\,z\, \frac{1}{\sqrt{n}}\,\sum_{j=0}^{n-1}  (\,g \circ \sigma^j - \beta^\prime (0)\,)} d \mu_f \\ 
%&=  e^{-\,z\, \frac{n}{\sqrt{n}}\beta^\prime (0)}\, \int_\Omega \,\,e^{z\,\frac{1}{\sqrt{n}}\sum_{j=0}^{n-1} \,g \circ \sigma^j}\, w_f \,d \rho_f \\ 
&= e^{-\,z\,\frac{n}{\sqrt{n}}\beta^\prime (0)}\,\,\int_\Omega \,e^{z\,\frac{1}{\sqrt{n}}\sum_{j=0}^{n-1} \,g \circ \sigma^j}\, w_f \, \frac{1}{ \lambda_f^n} d(\,(\mathcal{L}_f^*)^n  (\rho_f)) \\ 
%&= e^{-\,z\,\frac{n}{\sqrt{n}}\beta^\prime (0)}\,\frac{1}{ \lambda_f^n}\int_\Omega \mathcal{L}_f^n\,(\,e^{z\,\frac{1}{\sqrt{n}}\,\sum_{j=0}^{n-1} \,g \circ \sigma^j})\, (w_f)\, d\, \rho_f  \\
&= e^{-\,z\,\frac{n}{\sqrt{n}}\beta^\prime (0)}\,\frac{1}{ \lambda_f^n}\int_\Omega \mathcal{L}_{f+ z\,\frac{1}{\sqrt{n}}\, g}^n\,(\, w_f)\, d\,   \rho_f \\
&= e^{-\,z\,\frac{n}{\sqrt{n}}\,\beta^\prime (0)}\frac{1}{ \lambda_f^n}\,\frac{\lambda^n_{f+ z\,\frac{1}{\sqrt{n}}\, g}}{ \lambda^n_{f+ z\,\frac{1}{\sqrt{n}}\, g}} \int_\Omega \mathcal{L}_{f+ z\,\frac{1}{\sqrt{n}}\, g}^n\,(\, w_f) \, d\,  \rho_f \\
&= e^{-\,z\,\frac{n}{\sqrt{n}}\,\beta^\prime (0)\,+\,n\, \beta(  z\,\frac{1}{\sqrt{n}} )  }\,\frac{1}{ \lambda^n_{f+ z\,\frac{1}{\sqrt{n}}\,g}} \int_\Omega \mathcal{L}_{f+ z\,\frac{1}{\sqrt{n}}\,g}^n\,(\, w_f)\, d\,   \rho_f  \\
&= e^{n (\, -\,z\,\beta^\prime (0)\frac{1}{\sqrt{n}}\,+\, \beta(  z\,\frac{1}{\sqrt{n}} )  \,)}\int_\Omega \frac{\mathcal{L}^n_{f+ z\,\frac{1}{\sqrt{n}}\,g}\,(\, w_f) }{  \lambda^n_{f+ z\,\frac{1}{\sqrt{n}}\,g}}\, d\,   \rho_f \;. 
\end{split}
\end{equation}

On the other hand, by the continuity of the maps $f \mapsto w_f$ and $f \mapsto \rho_f$, it follows immediately that
 \begin{equation} \label{super133} \lim_{n \to \infty}w_{f+ z\,\frac{1}{\sqrt{n}}\,g} = w_f \,\,\text{ and}\,\,  \lim_{n \to \infty}\rho_{f+ z\,\frac{1}{\sqrt{n}}\,g} = \rho_f \,.
 \end{equation}

 In order to estimate \eqref{super1}, observe that taking the potential $f + z\,\frac{1}{\sqrt{n}}\,g$ instead of $f$ in \eqref{rpf2} and choosing $w_f$ instead of $w$ in \eqref{rpf2} as well, by the assumption $\int_\Omega w_f \,d\, \rho_{f} =1$, it follows that any $x \in \Omega$ satisfies the limit
\begin{equation} \label{super134}
\begin{split}
\lim_{n \to \infty}   \frac{\mathcal{L}^n_{f+ z\,\frac{1}{\sqrt{n}}\,g}\,(\, w_f)  \,(x)}{  \lambda^n_{f+ z\,\frac{1}{\sqrt{n}}\,g}}
&= \lim_{n \to \infty}  w_{f+ z\,\frac{1}{\sqrt{n}}\,g} (x) \, \int_\Omega w_f \,d\, \rho_{f+ z\,\frac{1}{\sqrt{n}}\,g} \\
&= w_f(x) \,  \int_\Omega w_f \,d\, \rho_{f}= w_{f} \,(x)\, .
\end{split}
\end{equation}

Besides that, since $\|\mathcal{L}^n_{f+ z\,\frac{1}{\sqrt{n}}\,g}\,(\, w_f)\|_\infty < \infty$ for any $n \in \mathbb{N}$, we can take into account the reasoning in \eqref{super134} for estimating the limit in  \eqref{super1}. 

Indeed, first note that $\lim_{n\to \infty} \beta(  z\,\frac{1}{\sqrt{n}}) = \beta(0)= 0$. Therefore, up to a multiplicative constant, from \eqref{Tay2}, \eqref{super1}, \eqref{super133}, \eqref{super134} and the assumption $\int w_f \,d\, \rho_{f} =1$, it will follow that
 \begin{equation} \label{super2}
 \begin{split}
 \lim_{n \to \infty} \varphi_n (z) 
 &= \lim_{n\to\infty} \int_\Omega e^{-\,z t} d \rho_n (t) \\
 &= \lim_{n\to\infty} e^{n (\,-\,z\,\beta^\prime (0)\frac{1}{\sqrt{n}}\,+\, \beta(  z\,\frac{1}{\sqrt{n}} )  \,)}\int_\Omega \frac{\mathcal{L}^n_{f+ z\,\frac{1}{\sqrt{n}}\,g}\,(\, w_f)\,}{  \lambda^n_{f+ z\,\frac{1}{\sqrt{n}}\,g}}\, d\,\rho_f \, \\
 &= e^{ z^2\, \frac{1}{2}\,\beta^{\prime \prime} (0)} \, \lim_{n\to\infty}  \int_\Omega \frac{\mathcal{L}^n_{f+ z\,\frac{1}{\sqrt{n}}\,g}\,(\, w_f)\,}{  \lambda^n_{f+ z\,\frac{1}{\sqrt{n}}\,g}}\, d\,   \rho_f\, \\
 &= e^{ z^2\, \frac{1}{2}\,\beta^{\prime \prime} (0)} \, \,\int_\Omega w_f d\,\rho_f = e^{ z^2\, \frac{1}{2}\,\beta^{\prime \prime} (0)} \,,
  \end{split}
 \end{equation}
 where $e^{ z^2\, \frac{1}{2}\,\beta^{\prime \prime} (0)}$ is the moment generation function for the Gaussian probability distribution  $\phi_{0, \beta^{\prime \prime} (0)}.$

 Finally, from the above, we get that the following limit holds
 $$ \lim_{n \to \infty}\varphi_n (z) = e^{ z^2\, \frac{1}{2}\,\beta^{\prime \prime} (0)} = \, \int_{\mathbb{R}} e^{\,z \,t} d \phi_{0,\beta^{\prime \prime}(0)} (t)\;.$$
 
f So, our claim holds by Theorem \ref{cthm}.
\end{proof}
 
 \begin{remark}
Theorem \ref{kler} implies the following interesting cases: If we take $u(x) := 1_{(-\infty, c)}$ we get that \eqref{clt2} is the same as the expression in \eqref{cltla}; when we take $u(x) := x$ we obtain that the limit in \eqref{clt2} is equal to $\beta^\prime(0) = \int_\Omega g d \mu_f$; moreover, choosing $u(x)=x^2$, it follows that the limit in  \eqref{clt2} is equal to the value $\beta^{\prime \prime}(0) = \lim_{n \to \infty} \frac{1}{n}  \int_\Omega  (\sum_{j=0}^{n-1}\,g \circ  \sigma^j)^2 d\mu_f$.
\end{remark}

\section{First and Second Derivatives} 
\label{Seco}

Given an {\em a priori measure} $\nu \in \mathcal{M}_1(M)$ and  a  potential $f \in \mathrm{Lip}(\Omega)$, consider as in \eqref{ruop}  the Ruelle operator $\mathcal{L}_f$ given by

\begin{equation*}
\mathcal{L}_f (w)(x) := \int_M e^{f(ax)}w(ax) d\nu(a).
\end{equation*}

Assume that $f \in \mathcal{N}(\Omega)$ and denote by $\mu_f$ the probability such that $\mathcal{L}_f^*( \mu_f)= \mu_f$.  Remember that the set of all $\mu_f$'s, obtained when $f \in \mathcal{N}(\Omega)$,  is also denoted by  $\mathcal{N}(\Omega)$ and any element in this set is called a Lipschitz-Gibbs probability.

The entropy of $\mu_f \in \mathcal{N}(\Omega)$, where $f = \log J$ (see \cite{LMMS}), is given by the expression
\begin{equation}
\label{ruop7777} h(\mu_f) = - \int_\Omega \log J d \mu_f=  - \int_\Omega f d \mu_f \;.
\end{equation}

 Denote by $N : \mathrm{Lip}(\Omega) \to \mathcal{N}(\Omega)$ the function sending a $f \in \mathrm{Lip}(\Omega)$  into its associated normalized potential $N(f) =: \log J_f \in \mathcal{N}(\Omega)$, it is well known that the function $f \mapsto N(f)$ is analytic (see Theorem 3.5 in \cite{GKLM}). Denote by $\mathcal{C}$ the set of Lipschitz continuous functions $\psi : \Omega \to \mathbb{R}$ of the form $\psi = w - (w \circ \sigma) + c$, where $c \in \mathbb{R}$ and $w \in \mathrm{Lip}(\Omega)$.

Theorem 3.3 in \cite{GKLM} claims that for $f \in \mathcal{N}(\Omega)$, the vector spaces $\mathrm{Ker}(\mathcal{L}_f)$ and $\mathcal{C}$ intersect trivially and, even more
$$\mathrm{Lip}(\Omega) = \mathrm{Ker}(\mathcal{L}_f) \oplus \mathcal{C} \;.$$

Therefore, any  potential $\varphi \in \mathrm{Lip}(\Omega)$, can be written in the form
\begin{equation} \label{kluy}
\varphi = \xi + \Bigl(w - (w \circ \sigma) + c \Bigr),
\end{equation}
with $\xi \in \mathrm{Ker}(\mathcal{L}_f)$ and $w := (I - \mathcal{L}_f)^{-1}(c - \mathcal{L}_f(\varphi)) \in \mathrm{Lip}(\Omega)$.

Moreover, if  $\varphi \in \mathcal{N}(\Omega)$ is co-boundary to $\psi \in \mathcal{N}(\Omega)$, then, the associated $\xi$ appearing in \eqref{kluy} is the same for both $\varphi$ and $\psi$.

For a function $\eta \in \mathrm{Ker}(\mathcal{L}_f)$ which satisfies $\int_\Omega \eta d \mu_f = 0$, a natural normalization is to assume that $\int_\Omega \eta^2 d \mu_f = 1$.

\medskip

\begin{remark} \label{ptre}
 Given $f, g \in \mathrm{Lip}(\Omega)$ and  a function  $\varphi \in \mathrm{Lip}(\Omega)$ in the form  \eqref{kluy} note that
$$ \int_\Omega \varphi \, d \mu_{f + g} - \int_\Omega \varphi \, d \mu_f = \int_\Omega \xi \, d \mu_{f + g} - \int_\Omega \xi \, d \mu_f \;.$$

Therefore, for computations like the calculus of the derivatives
$$ \frac{d}{dt} \int_\Omega \varphi \, d \mu_{f + tg} $$
we can assume that $\varphi \in \mathrm{Ker}(\mathcal{L}_f)$, i.e., to take $\varphi = \xi$.
\end{remark}

\medskip

\begin{remark} \label{ptre}  Note that when $\rho \in \mathrm{Ker}(\mathcal{L}_f)$, we get that
\begin{equation} \label{vaga5} (I - \mathcal{L}_f)^{-1} \rho = \rho.
\end{equation}
\end{remark}

\begin{theorem}{[Thm 3.5 in \cite{GKLM}]} The derivative of the map $N$ at the point $f$, denoted by $DN_f$, is the linear projection of the space $\mathrm{Lip}(\Omega)$ on $\mathrm{Ker}(\mathcal{L}_{N(f)})$  in the direction of $\mathcal{C}$.

\end{theorem}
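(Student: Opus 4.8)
The plan is to exhibit $DN_f$ by evaluating it separately on the two summands of the decomposition $\mathrm{Lip}(\Omega) = \mathrm{Ker}(\mathcal{L}_{N(f)}) \oplus \mathcal{C}$ and checking that it acts as the identity on the first summand and as zero on the second. The conceptual engine is the observation that $N$ is constant along cohomology classes. Concretely, I would first prove the lemma: for every $\psi = u - u\circ\sigma + c \in \mathcal{C}$ one has $N(f + \psi) = N(f)$. This reduces to the transformation rule for the eigendata. Writing $f' := f + \psi$ and using $\sigma(ax) = x$, a direct computation gives $\mathcal{L}_{f'}(h) = e^{c}\,e^{-u}\,\mathcal{L}_f(e^{u} h)$; plugging in $h = e^{-u} w_f$ shows $w_{f'} = e^{-u} w_f$ and $\lambda_{f'} = e^{c}\lambda_f$ (the scalar ambiguity in the eigenfunction is harmless, since it cancels in the difference $\log w_{f'} - \log(w_{f'}\circ\sigma)$). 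Substituting these into the definition of $N$, the terms involving $u$ and $c$ cancel in pairs and leave $N(f') = N(f)$.

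Granting the lemma, two consequences follow. First, taking $\psi \in \mathcal{C}$ and applying the lemma along the ray $t \mapsto f + t\psi$ (note that $f + t\psi = f + (tu) - (tu)\circ\sigma + tc \in f + \mathcal{C}$) yields $N(f + t\psi) \equiv N(f)$, hence $DN_f(\psi) = 0$; thus $\mathcal{C} \subseteq \ker DN_f$. Second, the lemma says $N = N \circ \Phi_\psi$ for the translation $\Phi_\psi(h) := h + \psi$ with $\psi$ fixed; since $D\Phi_\psi = \mathrm{Id}$, the chain rule gives $DN_h = DN_{h + \psi}$ for all $h$. Choosing $\psi := N(f) - f$, which lies in $\mathcal{C}$ by the very definition of the normalization, I obtain $DN_f = DN_{\bar f}$ with $\bar f := N(f)$. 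This is the key reduction: it lets me replace the arbitrary base point $f$ by the normalized potential $\bar f$.

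It then remains to identify $DN_{\bar f}$. Since $\bar f \in \mathcal{N}(\Omega)$ is a fixed point of $N$ (normalized potentials have $w \equiv 1$, $\lambda = 1$, whence $N(\bar f) = \bar f$), the map $N$ restricts to the identity on the manifold $\mathcal{N}(\Omega)$. By Theorem \ref{faef7} the tangent space is $T_{\bar f}\mathcal{N}(\Omega) = \mathrm{Ker}(\mathcal{L}_{\bar f})$, so for each $\eta \in \mathrm{Ker}(\mathcal{L}_{\bar f})$ I choose a curve $\gamma$ in $\mathcal{N}(\Omega)$ with $\gamma(0) = \bar f$ and $\gamma'(0) = \eta$; differentiating $N(\gamma(t)) = \gamma(t)$ at $t = 0$ gives $DN_{\bar f}(\eta) = \eta$. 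Combining this with $DN_{\bar f}|_{\mathcal{C}} = 0$ (the first consequence above, applied at $\bar f$) and the splitting $\mathrm{Lip}(\Omega) = \mathrm{Ker}(\mathcal{L}_{\bar f}) \oplus \mathcal{C}$, I conclude that for $g = \eta + \psi$ one has $DN_{\bar f}(g) = \eta$, i.e. $DN_{\bar f}$ is exactly the linear projection onto $\mathrm{Ker}(\mathcal{L}_{\bar f})$ in the direction of $\mathcal{C}$. Since $DN_f = DN_{\bar f}$, the same description holds for $DN_f$, which is the assertion.

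I expect the main obstacle to be the careful bookkeeping in the cohomology-invariance lemma—in particular, ensuring that the scalar normalization of the eigenfunction $w_{f'}$ does not interfere—together with the passage $DN_f = DN_{N(f)}$, which is what converts the statement (phrased for an arbitrary, possibly non-normalized $f$) into the transparent computation at the normalized point $\bar f$. Everything else is supplied either by the given direct-sum decomposition or by the manifold structure of $\mathcal{N}(\Omega)$ recorded in Theorem \ref{faef7}.
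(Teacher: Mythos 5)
Your proposal is correct, but note that the paper itself offers no proof of this statement: it is imported verbatim from [GKLM, Thm.\ 3.5], so there is no internal argument to compare against. Your derivation is a legitimate self-contained reconstruction. The key lemma --- that $N(f+\psi)=N(f)$ for $\psi = u - u\circ\sigma + c \in \mathcal{C}$ --- is verified correctly: the identity $\mathcal{L}_{f+\psi}(h) = e^{c}e^{-u}\mathcal{L}_f(e^{u}h)$ follows from $\sigma(ax)=x$, the function $e^{-u}w_f$ is strictly positive and Lipschitz, so by uniqueness in the RPF theorem it is the Perron eigenfunction of $\mathcal{L}_{f+\psi}$ with eigenvalue $e^{c}\lambda_f$, and the scalar ambiguity indeed cancels in $\log w_{f+\psi} - \log(w_{f+\psi}\circ\sigma)$. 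The two consequences (vanishing of $DN_f$ on $\mathcal{C}$, and $DN_f = DN_{N(f)}$ via the translation by $N(f)-f = \log w_f - \log(w_f\circ\sigma) - \log\lambda_f \in \mathcal{C}$) follow cleanly, and the identification $DN_{N(f)}|_{\mathrm{Ker}(\mathcal{L}_{N(f)})} = \mathrm{Id}$ is sound provided you invoke, as you do, the manifold structure of Theorem \ref{faef7} to guarantee curves in $\mathcal{N}(\Omega)$ with arbitrary prescribed tangent $\eta \in \mathrm{Ker}(\mathcal{L}_{N(f)})$; differentiating $N(\gamma(t))=\gamma(t)$ then closes the argument, and linearity of the Fr\'echet derivative combines the two summands of $\mathrm{Lip}(\Omega) = \mathrm{Ker}(\mathcal{L}_{N(f)}) \oplus \mathcal{C}$. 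What your route buys is transparency: everything reduces to the cohomology-invariance of $N$ plus the fixed-point property $N|_{\mathcal{N}(\Omega)} = \mathrm{Id}$, at the modest cost of presupposing the differentiability of $N$ and the direct-sum decomposition, both of which the paper already quotes from [GKLM].
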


\begin{theorem}{\cite{GKLM}} \label{faef1}
Consider the map $\eta \mapsto \mu_{f + \eta}$, where $f, \eta \in \mathrm{Lip}(\Omega)$. Assume that $f \in \mathcal{N}(\Omega)$, $\varphi \in \mathrm{Lip}(\Omega)$ satisfies $\int_\Omega \varphi d \mu_f = 0$, and  finally that $\eta \in \mathrm{Ker}(\mathcal{L}_f)$. Then,
\begin{equation} \label{lind1}
\int_\Omega \varphi \,d \mu_{f + \eta}= \int_\Omega (I - \mathcal{L}_f)^{-1} (\varphi) \eta d \mu_f + O(||\varphi|| \, ||\eta||^2) \,.
\end{equation}

Now, assuming that $f, g \in \mathrm{Lip}(\Omega)$ are general (not necessarily in the kernel of $\mathcal{L}_f$),
we get from Theorem C (or expression (6)) in \cite{GKLM}

\begin{equation} \label{lind2}
\frac{d}{d t} \int_\Omega \varphi \, d \mu_{f + tg}|_{t=0}= \int_\Omega (I - \mathcal{L}_f)^{ -1} (\varphi_f) \,. \, DN_f (g) d \mu_f
\end{equation}
where $\varphi_f =\varphi - \int_\Omega \varphi d \mu_f.$

Finally, if we consider $f \in \mathcal{N}(\Omega)$, $\eta \in \mathrm{Ker}(\mathcal{L}_f)$, and do not assume that $\int_\Omega \varphi d\mu_f = 0$, we get the more general result
\begin{equation} \label{lind4}
\int_\Omega \varphi \, d \mu_{f + \eta} = \sum_{j=0}^\infty  \int_\Omega \varphi \,\, (\eta \circ \sigma^j) \, d \mu_f + O(||\varphi|| \,||\eta||^2) \,.
\end{equation}
\end{theorem}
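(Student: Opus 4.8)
The plan is to derive all three displays from a single first-order perturbation computation for the Ruelle fixed-point measure, using the analyticity of $f \mapsto \mu_f$ to control the remainders and the self-duality of the transfer operator to rewrite the leading term. First I would record the algebra available when $f \in \mathcal{N}(\Omega)$: because $\sigma(ax) = x$, the operator in \eqref{ruop} obeys the transfer identity $\mathcal{L}_f\big((b \circ \sigma)\,a\big) = b\,\mathcal{L}_f(a)$, and since $\mathcal{L}_f^*(\mu_f) = \mu_f$ one has $\int_\Omega \mathcal{L}_f(w)\,d\mu_f = \int_\Omega w\,d\mu_f$. Combining the two gives the duality
\[
\int_\Omega \mathcal{L}_f^{\,k}(a)\,b\,d\mu_f = \int_\Omega a\,(b \circ \sigma^k)\,d\mu_f, \qquad k \ge 0 .
\]
I would also note two facts about the kernel: if $\eta \in \mathrm{Ker}(\mathcal{L}_f)$ then $\int_\Omega \eta\,d\mu_f = \int_\Omega \mathcal{L}_f(\eta)\,d\mu_f = 0$, and by \eqref{po2} the linearization $\mathcal{L}_{f+s\eta}(w) = \mathcal{L}_f(w) + s\,\mathcal{L}_f(\eta w) + O(s^2)$ gives $\mathcal{L}_{f+s\eta}(1) = 1 + O(s^2)$, so the curve $s \mapsto f + s\eta$ is normalized to first order, as it must be since $T_f(\mathcal{N}(\Omega)) = \mathrm{Ker}(\mathcal{L}_f)$ by Theorem \ref{faef7}.

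The core step is to expand the fixed point. Writing $\mu_{f+s\eta} = \mu_f + s\,\dot\mu + O(s^2)$ with $\dot\mu(\Omega) = 0$ (each $\mu_{f+s\eta}$ being a probability), I would differentiate $\mathcal{L}_{N(f+s\eta)}^*(\mu_{f+s\eta}) = \mu_{f+s\eta}$ at $s = 0$; since $N(f+s\eta) = f + s\eta + O(s^2)$ (because $DN_f(\eta) = \eta$ for $\eta$ in the kernel), the linearized operator is $w \mapsto \mathcal{L}_f(\eta w)$, and testing the derivative against $\varphi$ collapses to the scalar identity
\[
\int_\Omega (\mathcal{L}_f\varphi - \varphi)\,d\dot\mu = -\int_\Omega \eta\,\varphi\,d\mu_f \qquad \text{for every } \varphi .
\]
I would then solve for $\int_\Omega \varphi\,d\dot\mu$ by substituting the particular test function $\psi := (I - \mathcal{L}_f)^{-1}(\varphi_f)$, where $\varphi_f = \varphi - \int_\Omega \varphi\,d\mu_f$ lies in the mean-zero subspace on which $(I - \mathcal{L}_f)^{-1}$ is defined; then $\mathcal{L}_f\psi - \psi = -\varphi_f$, and since $\dot\mu$ annihilates constants this yields
\[
\frac{d}{ds}\int_\Omega \varphi\,d\mu_{f+s\eta}\Big|_{s=0} = \int_\Omega \varphi\,d\dot\mu = \int_\Omega (I - \mathcal{L}_f)^{-1}(\varphi_f)\,\eta\,d\mu_f .
\]

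The three statements then follow by specialization. For \eqref{lind1} I add the hypothesis $\int_\Omega \varphi\,d\mu_f = 0$, so $\varphi_f = \varphi$, and promote the derivative to the finite increment using analyticity of $\eta \mapsto \mu_{f+\eta}$: the first-order term is $\int_\Omega (I - \mathcal{L}_f)^{-1}(\varphi)\,\eta\,d\mu_f$ and the error is $O(\|\varphi\|\,\|\eta\|^2)$, the factor $\|\varphi\|$ coming from linearity in $\varphi$. For \eqref{lind4} I keep $\varphi$ arbitrary and identify the same leading term with a series, rewriting via the Neumann expansion $(I - \mathcal{L}_f)^{-1} = \sum_{k \ge 0}\mathcal{L}_f^{\,k}$ (convergent on the mean-zero subspace by the spectral gap) followed by the duality of the first paragraph,
\[
\int_\Omega (I - \mathcal{L}_f)^{-1}(\varphi_f)\,\eta\,d\mu_f = \sum_{k=0}^\infty \int_\Omega \mathcal{L}_f^{\,k}(\varphi_f)\,\eta\,d\mu_f = \sum_{k=0}^\infty \int_\Omega \varphi\,(\eta \circ \sigma^k)\,d\mu_f ,
\]
where the last equality uses $\int_\Omega \eta\,d\mu_f = 0$ to replace $\varphi_f$ by $\varphi$ termwise. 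Finally, \eqref{lind2} is the chain rule: $\mu_{f+tg}$ depends on $f + tg$ only through its normalization, $\frac{d}{dt}N(f+tg)|_{t=0} = DN_f(g) \in \mathrm{Ker}(\mathcal{L}_f)$, so the derivative obtained above with $\eta$ replaced by $DN_f(g)$ gives exactly $\int_\Omega (I - \mathcal{L}_f)^{-1}(\varphi_f)\,DN_f(g)\,d\mu_f$.

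I expect the main obstacle to be the rigorous justification of the expansion $\mu_{f+s\eta} = \mu_f + s\,\dot\mu + O(s^2)$ with a genuinely quadratic, uniform remainder, together with the boundedness of $(I - \mathcal{L}_f)^{-1}$ on the mean-zero subspace and the geometric convergence of the Neumann series. Both rest on the spectral gap established after \eqref{rpf2} and on the analyticity of $f \mapsto \mu_f$; once these quantitative estimates are in hand, the remaining manipulations — differentiating the fixed-point equation, interchanging sum and integral, and tracking the $\|\varphi\|\,\|\eta\|^2$ bound — are routine.
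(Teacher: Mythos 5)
Your derivation is correct, but note that the paper itself does not prove this theorem: its ``proof'' is a one-line citation, attributing \eqref{lind1} and \eqref{lind4} to Theorem 4.2 of \cite{GKLM} and \eqref{lind2} to Theorem 4.1 there. What you have written is therefore not an alternative to an in-paper argument but a self-contained reconstruction of the linear-response computation that underlies the cited results. The reconstruction is sound: the duality $\int_\Omega \mathcal{L}_f^{\,k}(a)\,b\,d\mu_f = \int_\Omega a\,(b\circ\sigma^k)\,d\mu_f$ is the standard consequence of $\mathcal{L}_f((b\circ\sigma)a)=b\,\mathcal{L}_f(a)$ and $\mathcal{L}_f^*\mu_f=\mu_f$; differentiating the fixed-point equation and testing against $\psi=(I-\mathcal{L}_f)^{-1}(\varphi_f)$ is exactly the right way to isolate $\int_\Omega\varphi\,d\dot\mu$; the identification $DN_f(\eta)=\eta$ for $\eta\in\mathrm{Ker}(\mathcal{L}_f)$ is supplied by the theorem on $DN_f$ quoted just before the statement; and the passage from \eqref{lind1} to \eqref{lind4} via the Neumann series plus the duality, using $\sigma$-invariance of $\mu_f$ and $\int_\Omega\eta\,d\mu_f=0$ to replace $\varphi_f$ by $\varphi$ termwise, is correct. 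Your version has the advantage of making visible precisely which ingredients are needed in the compact-alphabet $XY$ setting (spectral gap on the mean-zero subspace of $\mathrm{Lip}(\Omega)$, analyticity of $s\mapsto\mu_{f+s\eta}$ as a functional-valued map, and $DN_f$ being the projection onto the kernel along coboundaries), which the paper only asserts transfers from \cite{GKLM}.

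Two points deserve the extra care you already flag. First, the Neumann series and the termwise integration require geometric decay of $\|\mathcal{L}_f^{\,k}(\varphi_f)\|$ in a norm for which the series of integrals converges absolutely; the sup-norm convergence in \eqref{rpf2} alone gives no rate, so you must invoke the spectral gap in the Lipschitz norm (as stated after \eqref{rpf2} and in \cite{SV}) to get invertibility of $I-\mathcal{L}_f$ on the $\mu_f$-mean-zero subspace. Second, the expansion $\mu_{f+s\eta}=\mu_f+s\dot\mu+O(s^2)$ with a remainder that is uniform and linear in the test function $\varphi$ is what produces the $O(\|\varphi\|\,\|\eta\|^2)$ bound; this needs the analyticity of the measure-valued map in the dual norm, which the paper attributes to the references in Section \ref{aro-sec} rather than proving. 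With those two inputs granted, your argument is complete.
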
 

\begin{proof} The first claim  \eqref{lind1} and the last claim \eqref{lind4} follow from Theorem 4.2 in \cite{GKLM}. The claim in \eqref{lind2} follows from Theorem 4.1 in \cite{GKLM}.
\end{proof}

A useful result follows from the above:

\begin{theorem}{\cite{GKLM}} \label{faef2}  Assume that $f, \eta, \varphi \in \mathrm{Lip}(\Omega)$, $f \in \mathcal{N}(\Omega)$, and consider the law $t \mapsto \mu_{f + t\eta}$, where $\eta \in \mathrm{Ker}(\mathcal{L}_f)$. Then,
\begin{equation} \label{lind5}
\frac{d}{dt} \int_\Omega \varphi \, d \mu_{f + t\eta}|_{t=0} = \int_\Omega \varphi \,\, \eta\, d \mu_f \,.
\end{equation}
\end{theorem}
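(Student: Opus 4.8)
The plan is to derive the first-order identity \eqref{lind5} directly from Theorem \ref{faef1}, after first replacing $\varphi$ by its component in $\mathrm{Ker}(\mathcal{L}_f)$. The reason for this preliminary reduction is that the directional derivative $\frac{d}{dt}\int_\Omega\varphi\,d\mu_{f+t\eta}|_{t=0}$ only feels the part of $\varphi$ lying in the kernel: using the splitting $\mathrm{Lip}(\Omega)=\mathrm{Ker}(\mathcal{L}_f)\oplus\mathcal{C}$ I would write $\varphi=\xi+(w-w\circ\sigma+c)$ as in \eqref{kluy}, with $\xi\in\mathrm{Ker}(\mathcal{L}_f)$, and invoke the observation preceding the statement (the coboundary-plus-constant part does not change increments of the form $\int_\Omega\varphi\,d\mu_{f+t\eta}-\int_\Omega\varphi\,d\mu_f$). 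Hence I may assume $\varphi\in\mathrm{Ker}(\mathcal{L}_f)$; note that then $\int_\Omega\varphi\,d\mu_f=\int_\Omega\mathcal{L}_f(\varphi)\,d\mu_f=0$, because $\mathcal{L}_f^*\mu_f=\mu_f$.

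With this normalization in place I would feed the direction $g=t\eta$ into Theorem \ref{faef1}. Since $\mathrm{Ker}(\mathcal{L}_f)$ is a linear subspace we have $t\eta\in\mathrm{Ker}(\mathcal{L}_f)$, and since $\int_\Omega\varphi\,d\mu_f=0$ the hypotheses of \eqref{lind1} are met, giving
\[ \int_\Omega\varphi\,d\mu_{f+t\eta}=\int_\Omega (I-\mathcal{L}_f)^{-1}(\varphi)\,(t\eta)\,d\mu_f+O\!\left(\|\varphi\|\,\|t\eta\|^{2}\right). \]
Because $\varphi\in\mathrm{Ker}(\mathcal{L}_f)$, the identity \eqref{vaga5} collapses $(I-\mathcal{L}_f)^{-1}(\varphi)=\varphi$, and $\|t\eta\|^{2}=t^{2}\|\eta\|^{2}$, so the right-hand side becomes $t\int_\Omega\varphi\,\eta\,d\mu_f+O(t^{2})$. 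Differentiating at $t=0$ produces exactly \eqref{lind5}.

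As an alternative route that exposes why only a single term survives, I would instead apply \eqref{lind4}: replacing $\eta$ by $t\eta$ and using the homogeneity $(t\eta)\circ\sigma^{j}=t\,(\eta\circ\sigma^{j})$, the derivative at $t=0$ equals $\sum_{j=0}^{\infty}\int_\Omega\varphi\,(\eta\circ\sigma^{j})\,d\mu_f$. I would then kill every term with $j\ge 1$ by the transfer-operator duality: from $\mathcal{L}_f\big(u\cdot(V\circ\sigma)\big)=V\,\mathcal{L}_f(u)$ together with $\mathcal{L}_f^*\mu_f=\mu_f$ one obtains
\[ \int_\Omega\varphi\,(V\circ\sigma)\,d\mu_f=\int_\Omega (\mathcal{L}_f\varphi)\,V\,d\mu_f, \]
and applying this with $V=\eta\circ\sigma^{j-1}$ gives $0$ since $\mathcal{L}_f\varphi=0$. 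Only the $j=0$ term $\int_\Omega\varphi\,\eta\,d\mu_f$ is left, recovering \eqref{lind5}.

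The step I expect to be the main obstacle is not the algebra but the two justifications hidden in it: first, that the reduction to the kernel component is legitimate, i.e.\ that the $\mathcal{C}$-summand of $\varphi$ contributes nothing to the first-order term (equivalently, that $\int_\Omega\varphi\,\eta\,d\mu_f$ should be read through the kernel part $\xi$, using $\int_\Omega(V\circ\sigma)\,\eta\,d\mu_f=\int_\Omega V\,\mathcal{L}_f(\eta)\,d\mu_f=0$); and second, the rigorous control of the remainder, namely that the $O(\|\eta\|^{2})$ in Theorem \ref{faef1} is genuinely quadratic in the scaling parameter so that it drops out of the derivative at $t=0$. Once these are secured, the conclusion follows from the two elementary facts $(I-\mathcal{L}_f)^{-1}\big|_{\mathrm{Ker}(\mathcal{L}_f)}=\mathrm{Id}$ and the duality $\mathcal{L}_f^*\mu_f=\mu_f$.
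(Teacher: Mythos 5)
Your two routes both reduce, correctly, to the case $\varphi\in\mathrm{Ker}(\mathcal{L}_f)$, and there they are sound: for $\varphi=\xi$ in the kernel one has $\int_\Omega\xi\,d\mu_f=\int_\Omega\mathcal{L}_f(\xi)\,d\mu_f=0$, \eqref{lind1} applies to the direction $t\eta$ with remainder $O(t^2)$, and $(I-\mathcal{L}_f)^{-1}\xi=\xi$ by \eqref{vaga5}; likewise in your second route $\mathcal{L}_f\xi=0$ kills every term $j\ge 1$ of \eqref{lind4}. Since the paper gives no argument for this theorem (it only cites Theorem 5.1(2) of \cite{GKLM}), deriving it from Theorem \ref{faef1} is a legitimate, more self-contained route, and it is essentially the computation the paper itself performs later in Corollary \ref{vaga2}.

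The gap sits exactly at the step you flagged as the ``main obstacle'': the identity $\int_\Omega\varphi\,\eta\,d\mu_f=\int_\Omega\xi\,\eta\,d\mu_f$ does \emph{not} follow from $\int_\Omega(V\circ\sigma)\,\eta\,d\mu_f=\int_\Omega V\,\mathcal{L}_f(\eta)\,d\mu_f=0$. Writing $\varphi=\xi+(w-w\circ\sigma)+c$ as in \eqref{kluy}, that identity disposes of the $w\circ\sigma$ term and of the constant, but it leaves
\begin{equation*}
\int_\Omega (w-w\circ\sigma+c)\,\eta\,d\mu_f=\int_\Omega w\,\eta\,d\mu_f\,,
\end{equation*}
which is not zero in general (take $w=\eta$). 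So what your argument actually proves is $\frac{d}{dt}\int_\Omega\varphi\,d\mu_{f+t\eta}|_{t=0}=\int_\Omega\xi\,\eta\,d\mu_f$, i.e.\ Corollary \ref{vaga2} with $g=\eta$; the final passage to \eqref{lind5} with $\varphi$ itself on the right is unjustified, and in fact cannot be repaired for arbitrary $\varphi\in\mathrm{Lip}(\Omega)$. Indeed, for $\varphi=\eta\circ\sigma$ the left-hand side of \eqref{lind5} equals $\frac{d}{dt}\int_\Omega\eta\,d\mu_{f+t\eta}|_{t=0}=\sigma_f^2(\eta)=\int_\Omega\eta^2\,d\mu_f>0$ (by shift-invariance of $\mu_{f+t\eta}$ together with Lemma \ref{cltav1}, Corollary \ref{cltav2} and Theorem \ref{saKL}), while the right-hand side is $\int_\Omega(\eta\circ\sigma)\,\eta\,d\mu_f=\int_\Omega\eta\,\mathcal{L}_f(\eta)\,d\mu_f=0$. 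The statement is therefore only correct under an additional hypothesis, e.g.\ $\varphi\in\mathrm{Ker}(\mathcal{L}_f)$, or more generally $\int_\Omega w\,\eta\,d\mu_f=0$ for the transfer function $w$ of the $\mathcal{C}$-component of $\varphi$; under such a hypothesis either of your two routes closes the proof.
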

\begin{proof} For a proof see (2) in Theorem 5.1 in \cite{GKLM}.

\end{proof}

\begin{corollary} \label{vaga2}  Assume $f, g, \varphi \in \mathrm{Lip}(\Omega)$, with $f \in \mathcal{N}(\Omega)$. Consider the law $t \mapsto \mu_{f + tg}$, where $g$ does not necessarily belong to the kernel of $\mathcal{L}_f$.  Assume that there are $v, w \in \mathrm{C}(\Omega)$ such that $g = \eta + (v - v \circ \sigma)$ and $\varphi= \xi + (w - w \circ \sigma) +c$, where $\eta, \xi \in \mathrm{Ker}(\mathcal{L}_f)$, and $c \in \mathbb{R}$. Then , we have,
\begin{equation} \label{lind5}
\frac{d}{dt} \int_\Omega \varphi \, d \mu_{f + tg}|_{t=0}= \int_\Omega \,\, \xi\, \eta\, d \mu_f \;.
\end{equation}
\end{corollary}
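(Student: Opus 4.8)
The plan is to strip the coboundary-plus-constant parts off both the observable $\varphi$ and the perturbation direction $g$, so that only the two kernel components $\xi$ and $\eta$ remain and produce the pairing $\int_\Omega \xi\,\eta\, d\mu_f$. Concretely, I would first reduce $\varphi$ to $\xi$, then reduce the direction $g$ to $\eta$, and finally invoke Theorem \ref{faef2} applied to the kernel observable $\xi$ together with the kernel direction $\eta$.

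For the first reduction, write $\varphi - \xi = (w - w\circ\sigma) + c \in \mathcal{C}$ as in \eqref{kluy}. Since each Lipschitz-Gibbs probability $\mu_{f+tg}$ is $T$-invariant (Section \ref{pre-sec}), for every $t$ one has $\int_\Omega (w - w\circ\sigma)\, d\mu_{f+tg} = 0$, whence $\int_\Omega (\varphi - \xi)\, d\mu_{f+tg} = c$ is constant in $t$. Differentiating at $t=0$ therefore gives $\frac{d}{dt}\int_\Omega \varphi\, d\mu_{f+tg}|_{t=0} = \frac{d}{dt}\int_\Omega \xi\, d\mu_{f+tg}|_{t=0}$; this is exactly the reduction recorded in Remark \ref{ptre}.

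For the second reduction, I would observe that $f + tg = (f + t\eta) + t\,(v - v\circ\sigma)$, so the potentials $f+tg$ and $f+t\eta$ differ by the continuous coboundary $t\,(v - v\circ\sigma)$. Because $\int_\Omega (v - v\circ\sigma)\, d\mu = 0$ for every invariant $\mu \in \mathcal{M}_T(\Omega)$, the variational functionals $\mu \mapsto h(\mu) + \int_\Omega (f+tg)\, d\mu$ and $\mu \mapsto h(\mu) + \int_\Omega (f+t\eta)\, d\mu$ coincide on $\mathcal{M}_T(\Omega)$; by uniqueness of the equilibrium state (the RPF theorem and the variational principle of \cite{LMMS}) this forces $\mu_{f+tg} = \mu_{f+t\eta}$ for all $t$. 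Hence $\frac{d}{dt}\int_\Omega \xi\, d\mu_{f+tg}|_{t=0} = \frac{d}{dt}\int_\Omega \xi\, d\mu_{f+t\eta}|_{t=0}$, and since $\eta \in \mathrm{Ker}(\mathcal{L}_f)$, Theorem \ref{faef2} applied to the observable $\xi \in \mathrm{Ker}(\mathcal{L}_f)$ yields $\frac{d}{dt}\int_\Omega \xi\, d\mu_{f+t\eta}|_{t=0} = \int_\Omega \xi\,\eta\, d\mu_f$, which is the claim.

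The step I expect to carry the most weight is making sure the coboundary part of $\varphi$ is discarded \emph{before} any pairing formula is invoked. If one applied Theorem \ref{faef2} to the full observable $\varphi$ (even after $\mu_{f+tg}=\mu_{f+t\eta}$), the coboundary term of $\varphi$ would spuriously contribute $\int_\Omega w\,\eta\, d\mu_f$: note that $\int_\Omega (w\circ\sigma)\,\eta\, d\mu_f = \int_\Omega w\,(\mathcal{L}_f \eta)\, d\mu_f = 0$, but $\int_\Omega w\,\eta\, d\mu_f$ need not vanish. It is precisely the invariance reduction of $\varphi$ to $\xi$ that removes this term, so the two reductions must be carried out together. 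An alternative route that sidesteps the cohomology argument is to apply \eqref{lind2} directly: there $DN_f(g) = \eta$ because $DN_f$ projects $\mathrm{Lip}(\Omega)$ onto $\mathrm{Ker}(\mathcal{L}_f)$ along $\mathcal{C}$, while $(I-\mathcal{L}_f)^{-1}(\varphi_f)$, computed via $(I-\mathcal{L}_f)^{-1}\xi = \xi$ from \eqref{vaga5} together with $\mathcal{L}_f(w\circ\sigma)=w$, pairs with $\eta$ to leave exactly $\int_\Omega \xi\,\eta\, d\mu_f$ once one again uses $\int_\Omega (w\circ\sigma)\,\eta\, d\mu_f = 0$.
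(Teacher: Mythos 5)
Your argument is correct, and it reaches the conclusion by a route that differs from the paper's in one of its two steps. The reduction of the observable $\varphi$ to its kernel component $\xi$ via $T$-invariance of $\mu_{f+tg}$ is exactly what the paper does (Remark \ref{ptre}). Where you diverge is in handling the direction $g$: the paper stays with the curve $t \mapsto \mu_{f+tg}$ and applies formula \eqref{lind2} directly, using $(I-\mathcal{L}_f)^{-1}\xi = \xi$ from \eqref{vaga5} and the fact that $DN_f(g)=\eta$ is the projection onto $\mathrm{Ker}(\mathcal{L}_f)$ along $\mathcal{C}$; you instead identify the curves of measures outright, $\mu_{f+tg}=\mu_{f+t\eta}$, via the variational principle and uniqueness of equilibrium states for cohomologous Lipschitz potentials, and then invoke Theorem \ref{faef2} for the kernel direction $\eta$ applied to the kernel observable $\xi$. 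Both are valid; your route trades the linear-algebraic projection $DN_f$ for a dynamical identification of equilibrium states, and your ``alternative route'' in the last paragraph is essentially the paper's own proof. One further point in your favor: your warning against applying Theorem \ref{faef2} to the full observable $\varphi$ is substantive, not merely pedantic. As you note, $\int_\Omega (w\circ\sigma)\,\eta\, d\mu_f = \int_\Omega w\,\mathcal{L}_f(\eta)\, d\mu_f = 0$ while $\int_\Omega w\,\eta\, d\mu_f$ need not vanish, so the literal right-hand side $\int_\Omega \varphi\,\eta\, d\mu_f$ of \eqref{lind5} in Theorem \ref{faef2} disagrees with \eqref{lind2} whenever $\varphi$ has a nontrivial coboundary component (apply it to $\varphi = w - w\circ\sigma$, whose integral against every $\mu_{f+t\eta}$ is identically zero, yet $\int_\Omega \varphi\,\eta\,d\mu_f = \int_\Omega w\,\eta\,d\mu_f$ can be nonzero). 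Restricting that theorem to kernel observables, as you do, is the safe and correct usage, and it is consistent with the paper's own computation via \eqref{lind2}.
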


\begin{proof} From \eqref{kluy} and Remark \ref{ptre} we can take 
$$ \frac{d}{dt} \int_\Omega \varphi \, d \mu_{f + tg}|_{t=0} = \frac{d}{dt} \int_\Omega \xi \, d \mu_{f + tg}|_{t=0}.$$

From \eqref{lind2}
\begin{equation} \label{lind231}
\begin{split}
\frac{d}{d t} \int_\Omega \xi \, d \mu_{f + tg}|_{t=0} 
&= \int_\Omega (I - \mathcal{L}_f)^{-1} (\xi) \,.\, DN_f (g) d \mu_f \\
&= \int_\Omega \xi\,.\, \eta\, d \mu_f \;.
\end{split}
\end{equation}

\end{proof}

A different proof of \eqref{ddent1} was presented in \cite{GKLM}.

\index{derivative of entropy}

In the following proposition, we study the behavior of the entropy  $h(\mu_f)$ for a Lipschitz-Gibbs probability $\mu_f \in \mathcal{N}(\Omega)$.

\begin{proposition} \label{dent} Consider the path $t \mapsto \mu_{f + t\eta}$, where $f$ and $\eta$ belong to $\mathrm{Lip}(\Omega)$. Assume that $f =\log J \in \mathcal{N}(\Omega)$ and $\eta \in \mathrm{Ker}(\mathcal{L}_f)$. Denoting $h(t) :=h (\mu_{f + t\eta})$ the entropy of the Lipschitz-Gibbs probability $\mu_{f + t\eta}$, we get
\begin{equation} \label{ddent1}  \frac{d}{d t} h(t)|_{t=0} \,=\, - \int_\Omega f \, \eta \,d \mu_f \;,
\end{equation}
and
\begin{equation} \label{ddent2} \frac{d^2}{dt^2} h(t)|_{t=0}  \,=  - \Bigl(\, \int_\Omega \, \eta^2\, d \mu_f + \int_\Omega f \, \eta^2\, d \mu_f\Bigr) =   - \, \int_\Omega (\log J + 1) \, \eta^2\, d \mu_f \;.
\end{equation}
\end{proposition}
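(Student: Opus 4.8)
The plan is to differentiate the entropy expression \eqref{ruop7777} along the curve of \emph{normalized} potentials rather than along $f+t\eta$ directly. Set $g(t):=N(f+t\eta)$, so that $g(t)\in\mathcal{N}(\Omega)$ for every $t$, $g(0)=f$, and $\mu_{f+t\eta}=\mu_{g(t)}$. Since $DN_f$ is the projection of $\mathrm{Lip}(\Omega)$ onto $\mathrm{Ker}(\mathcal{L}_f)$ along $\mathcal{C}$ and $\eta\in\mathrm{Ker}(\mathcal{L}_f)$, we have $g'(0)=DN_f(\eta)=\eta$. With this, $h(t)=-\int_\Omega g(t)\,d\mu_{g(t)}$, and I would run all the bookkeeping through the two–variable function $\Psi(t,s):=\int_\Omega g(t)\,d\mu_{g(s)}$, so that $h(t)=-\Psi(t,t)$, giving $h'(0)=-(\Psi_t+\Psi_s)(0,0)$ and $h''(0)=-(\Psi_{tt}+2\Psi_{ts}+\Psi_{ss})(0,0)$. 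Each separate partial is then a first–order object handled by the tools already in hand.

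For the first derivative, $\Psi_t(0,0)=\int_\Omega g'(0)\,d\mu_f=\int_\Omega\eta\,d\mu_f$ vanishes, because $\mathcal{L}_f^*\mu_f=\mu_f$ and $\mathcal{L}_f(\eta)=0$ force $\int_\Omega\eta\,d\mu_f=\int_\Omega\mathcal{L}_f(\eta)\,d\mu_f=0$; and $\Psi_s(0,0)=\frac{d}{ds}\int_\Omega f\,d\mu_{f+s\eta}|_{s=0}=\int_\Omega f\,\eta\,d\mu_f$ is exactly Theorem \ref{faef2} with test function $f$. Together these give \eqref{ddent1}.

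For the second derivative I would treat the three terms in turn. The pure–$t$ term $\Psi_{tt}(0,0)=\int_\Omega g''(0)\,d\mu_f$ I would compute from the normalization constraint $\mathcal{L}_{g(t)}(1)=1$: differentiating twice at $t=0$ and using the expansion \eqref{po1} yields $\mathcal{L}_f(\eta^2)+\mathcal{L}_f(g''(0))=0$, and integrating against $\mu_f$ (again via $\mathcal{L}_f^*\mu_f=\mu_f$) gives $\int_\Omega g''(0)\,d\mu_f=-\int_\Omega\eta^2\,d\mu_f$. The mixed term is $\Psi_{ts}(0,0)=\frac{d}{ds}\int_\Omega g'(0)\,d\mu_{g(s)}|_{0}=\frac{d}{ds}\int_\Omega\eta\,d\mu_{f+s\eta}|_{0}=\int_\Omega\eta^2\,d\mu_f$, once more by Theorem \ref{faef2}, now with test function $\eta$. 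The remaining pure–$s$ term is $\Psi_{ss}(0,0)=\frac{d^2}{ds^2}\int_\Omega f\,d\mu_{f+s\eta}|_{0}$, which I would obtain by applying the first–order formula a second time, expecting the value $\int_\Omega f\,\eta^2\,d\mu_f$. Assembling the three pieces gives $h''(0)=-\big(-\int_\Omega\eta^2\,d\mu_f+2\int_\Omega\eta^2\,d\mu_f+\int_\Omega f\eta^2\,d\mu_f\big)=-\int_\Omega\eta^2\,d\mu_f-\int_\Omega f\eta^2\,d\mu_f=-\int_\Omega(\log J+1)\eta^2\,d\mu_f$, which is \eqref{ddent2}.

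The step I expect to be the main obstacle is precisely the pure–$s$ term $\frac{d^2}{ds^2}\int_\Omega f\,d\mu_{f+s\eta}$. Theorem \ref{faef2} is a \emph{first–order} statement, valid at a normalized base point and in a direction lying in the kernel of the corresponding Ruelle operator; to differentiate a second time one is forced to re-differentiate at the moving base point $f+s\eta$, which is no longer normalized and for which $\eta\notin\mathrm{Ker}(\mathcal{L}_{N(f+s\eta)})$ when $s\neq 0$. The naive iteration of the first–order formula therefore needs genuine justification, either through the general expression \eqref{lind2} with the varying projection $DN_{f+s\eta}(\eta)$, or through a direct second–order expansion of the Ruelle operator as in \eqref{po2} together with control of the eigendata $w_{f+s\eta}$ and $\lambda_{f+s\eta}$. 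Verifying that no further second–order contribution survives in this term is the delicate heart of the argument, and is where I would concentrate the technical effort.
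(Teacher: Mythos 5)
Your route is genuinely different from the paper's, and most of it is sound: the reduction $h(t)=-\int_\Omega g(t)\,d\mu_{g(t)}$ with $g(t)=N(f+t\eta)$, the two--variable bookkeeping through $\Psi$, the computation of $\Psi_{tt}(0,0)=\int_\Omega g''(0)\,d\mu_f=-\int_\Omega\eta^2\,d\mu_f$ by differentiating the constraint $\mathcal{L}_{g(t)}(1)=1$ twice and using $\mathcal{L}_f^*\mu_f=\mu_f$, and the evaluation of $\Psi_t$, $\Psi_s$, $\Psi_{ts}$ via Theorem \ref{faef2} are all correct; in particular \eqref{ddent1} is fully established. The gap is exactly where you place it: $\Psi_{ss}(0,0)=\frac{d^2}{ds^2}\int_\Omega f\,d\mu_{f+s\eta}|_{s=0}$ is a genuine second derivative of the measure against a fixed test function, and nothing you invoke produces it. ``Applying the first-order formula a second time'' does not work as stated: for $s\neq 0$ formula \eqref{lind2} gives $\frac{d}{ds}\int_\Omega f\,d\mu_{f+s\eta}=\int_\Omega (I-\mathcal{L}_{B_s})^{-1}\bigl(f-\int_\Omega f\,d\mu_{f+s\eta}\bigr)\, DN_{f+s\eta}(\eta)\,d\mu_{f+s\eta}$ with $B_s=N(f+s\eta)$, and differentiating this in $s$ brings in derivatives of the resolvent, of the projection $DN_{f+s\eta}$ and of the measure itself, none of which visibly assemble into $\int_\Omega f\eta^2\,d\mu_f$. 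The value you ``expect'' is the right one, but as written it is asserted rather than proved, and it is the only point where the second-order structure of $s\mapsto\mu_{f+s\eta}$ actually enters your argument.

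The paper never meets this object. It starts from the variational identity $P(f+t\eta)=h(t)+\int_\Omega(f+t\eta)\,d\mu_{f+t\eta}$ together with $p'(t)=\int_\Omega\eta\,d\mu_{f+t\eta}$ (Remark \ref{lilim2}); in \eqref{ddent3} the term $\int_\Omega\eta\,d\mu_{f+t\eta}$ cancels from both sides, leaving the closed first-derivative identity $h'(t)=-\int_\Omega(f+t\eta)\,\eta\,d\mu_{f+t\eta}$. Differentiating this once more at $t=0$ requires only the explicit $t$ in the integrand plus one application of the first-order formula of Theorem \ref{faef2} to the test function $f\eta$ --- never a second derivative of the measure. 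If you wish to keep your decomposition, the realistic repair is to compute $\frac{d}{ds}\int_\Omega f\,d\mu_{f+s\eta}$ in closed form for all small $s$ by this pressure-identity cancellation and then differentiate once at $s=0$; a direct second-order expansion of $\eta\mapsto\mu_{f+\eta}$ is substantially more machinery than the proposition requires.
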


\begin{proof} First note that it follows from the chain rule and  Theorem \ref{faef2}:
\begin{equation} \label{ddent3}
\begin{split}
 \int_\Omega \eta\, d \mu_{f + t\eta} 
&= \frac{d}{dt} P(f + t\eta) =\frac{d}{dt} \Bigl( h(t) + \int_\Omega (f + t\eta)\, d \mu_{f + t\eta} \Bigr) \\
=& \frac{d}{dt}  h(t) \,+\, \int_\Omega \eta\,  d \mu_{f + t\eta}\, +\, \int_\Omega (f + t\eta)\,\eta\,  d \mu_{f + t\eta} \;.
\end{split}
\end{equation}

Since $\eta \in \mathrm{Ker}(\mathcal{L}_f)$, we get $\int_\Omega \eta d \mu_f = 0$. This implies that
$$\frac{d}{dt}  h(t) |_{t=0}\,=\,  -\, \int_\Omega f \,\eta d \mu_f \;.$$

Furthermore, from \eqref{ddent3}, it follows that
\begin{align*}
\frac{d^2}{dt^2} h(t)|_{t=0} 
=&\, - \, \frac{d}{dt} \, \Bigl( \, \int_\Omega (f + t\eta)\,\eta\,  d \mu_{f + t\eta}\, \Bigr)|_{t=0} \\
=&\, - \,\Bigl(\,\int_\Omega f \, \eta^2\, d \mu_f + \int_\Omega \, \eta^2\, d \mu_f \,\Bigr) \\
=&\, -\int_\Omega (\log J + 1) \, \eta^2\, d \mu_f \\
=& - \,\,\int_\Omega (\log J + \log (e)) \, \eta^2\, d \mu_f 
=\, - \int_\Omega \log (J \,e)  \, \eta^2\, d \mu_f \;.
\end{align*}

 It is natural to assume a normalization condition $\int_\Omega \eta^2\, d \mu_f = 1.$ In this case
$$-\int_\Omega \log (J \,e)  \  \, \eta^2\, d \mu_f \geq - \int_\Omega \log \,(e)  \  \, \eta^2\, d \mu_f = - 1.$$

The function $\eta$  may enhance points $x$ where the value $(J(x) \,e) $ is close to zero (producing large values for $\frac{d^2}{dt^2} h(t)|_{t=0})$) or points  $x$ where the value $(J(x) \,e) $ is close to $e$ (producing  values $\frac{d^2}{dt^2} h(t)|_{t=0})$ close to $-1$).
\end{proof}

\bigskip

\begin{proposition}  \label{vaga1}  Assume, $f, g \in \mathrm{Lip}(\Omega)$, with $g$ not necessarily on $\mathrm{Ker}(\mathcal{L}_f)$. Then,
\begin{equation} \label{lent1}  \frac{d}{dt}h(\mu_{f + tg})=-\,   \int_\Omega \zeta \,\eta\,  d \mu_f \;, \end{equation}
where $f = \zeta + (u - u \circ \sigma) + c$ and $g = \eta + (v - v \circ \sigma)$, with $\zeta, \eta \in \mathrm{Ker}(\mathcal{L}_f)$ and $c \in \mathbb{R}$.
\end{proposition}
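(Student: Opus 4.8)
The plan is to read the left-hand side of \eqref{lent1} as a derivative at $t=0$ and to obtain the formula exactly as in the derivation of Proposition \ref{dent}, via the variational characterization of the pressure, but feeding the computation with Corollary \ref{vaga2} (rather than with Theorem \ref{faef2}) so that a general direction $g$, not lying in $\mathrm{Ker}(\mathcal{L}_f)$, is allowed. As the decomposition $f=\zeta+(u-u\circ\sigma)+c$ already presupposes, I work under the standing hypothesis $f\in\mathcal{N}(\Omega)$, so that the splitting $\mathrm{Lip}(\Omega)=\mathrm{Ker}(\mathcal{L}_f)\oplus\mathcal{C}$ from \eqref{kluy} is available and $\mu_f$ is the Lipschitz-Gibbs probability with $\mathcal{L}_f^*(\mu_f)=\mu_f$.

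First I would invoke the variational identity $P(f+tg)=h(\mu_{f+tg})+\int_\Omega (f+tg)\,d\mu_{f+tg}$, valid for all $t$ since $\mu_{f+tg}$ is the equilibrium state of $f+tg$. Setting $h(t):=h(\mu_{f+tg})$ and differentiating at $t=0$, Lemma \ref{late} turns the left-hand side into $\int_\Omega g\,d\mu_f$. On the right-hand side the product rule splits $\tfrac{d}{dt}\int_\Omega(f+tg)\,d\mu_{f+tg}|_{t=0}$ into the integrand contribution $\int_\Omega g\,d\mu_f$ and the measure contribution $\tfrac{d}{dt}\int_\Omega f\,d\mu_{f+tg}|_{t=0}$. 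The two copies of $\int_\Omega g\,d\mu_f$ cancel, leaving the clean reduction $h'(0)=-\tfrac{d}{dt}\int_\Omega f\,d\mu_{f+tg}|_{t=0}$.

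It then remains only to identify the measure contribution, and this is precisely the content of Corollary \ref{vaga2} applied with $\varphi=f$. Indeed, writing $f=\zeta+(u-u\circ\sigma)+c$ with $\zeta\in\mathrm{Ker}(\mathcal{L}_f)$ and $g=\eta+(v-v\circ\sigma)$ with $\eta\in\mathrm{Ker}(\mathcal{L}_f)$, the corollary gives $\tfrac{d}{dt}\int_\Omega f\,d\mu_{f+tg}|_{t=0}=\int_\Omega \zeta\,\eta\,d\mu_f$. Substituting into the reduction yields $h'(0)=-\int_\Omega \zeta\,\eta\,d\mu_f$, which is exactly \eqref{lent1}.

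The step that demands the most care is the legitimacy of the product rule for $t\mapsto\int_\Omega(f+tg)\,d\mu_{f+tg}$, namely that $t\mapsto\mu_{f+tg}$ is differentiable and that its derivative can be separated from the derivative of the integrand $f+tg$. This is underwritten by the analyticity of the maps $f\mapsto w_f$, $f\mapsto\lambda_f$ and $f\mapsto\rho_f$ recorded in Section \ref{aro-sec}, which supply both the differentiability of $t\mapsto\mu_{f+tg}$ and the uniform bounds needed to differentiate under the integral sign; the quantitative identification of the measure contribution is then delegated to Corollary \ref{vaga2}, whose own proof already encodes the $t$-dependence of $\mu_{f+tg}$ through $DN_f$ and $(I-\mathcal{L}_f)^{-1}$.
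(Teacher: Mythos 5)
Your proposal is correct and follows essentially the same route as the paper: both differentiate the variational identity $P(f+tg)=h(\mu_{f+tg})+\int_\Omega(f+tg)\,d\mu_{f+tg}$ at $t=0$, cancel the two copies of $\int_\Omega g\,d\mu_f$, and identify $\tfrac{d}{dt}\int_\Omega f\,d\mu_{f+tg}|_{t=0}=\int_\Omega\zeta\,\eta\,d\mu_f$ via the kernel decomposition. The only cosmetic difference is that you delegate the last step to Corollary \ref{vaga2} while the paper applies \eqref{lind2} with $\varphi=\zeta$ directly, which is the same computation.
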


\begin{proof}
 If $g$ is not necessarily on the kernel of $\mathcal{L}_f$, we get from \eqref{lind2} that
\begin{equation} \label{iddent3} 
\begin{split}
\int_\Omega g\, d \mu_f 
=& \frac{d}{dt} P(f + tg)|_{t=0} \\
=& \frac{d}{dt} \Bigl( h(\mu_{f + tg}) + \int_\Omega (f + tg)\, d \mu_{f + tg} \Bigr)|_{t=0} \\
=& \frac{d}{dt}  h(\mu_{f + tg})|_{t=0} \,+\, \int_\Omega g d\mu_f \\
&+ \int_\Omega (I - \mathcal{L}_f)^{-1}\Bigl(f - \int_\Omega f d\mu_f\Bigr)\,.\,DN_f(g) d\mu_f \;.
\end{split}
\end{equation}
where we used the derivative of the product in the last line.

Using expression \eqref{kluy} for $f = \zeta + (u - u \circ \sigma) + c$, with $\zeta \in \mathrm{Ker}(\mathcal{L}_f)$, we get for the derivative the equivalent expression
$$\frac{d}{dt}\int_\Omega f\, d \mu_{f + tg}|_{t=0}=\frac{d}{dt}\int_\Omega \zeta\, d \mu_{f + tg} |_{t=0} \;.$$

Now in \eqref{lind2}, taking $\varphi = \zeta$, from remark \ref{ptre}, we get
\begin{equation} \label{iddent4} 
\begin{split}
\frac{d}{dt}  h(\mu_{f + tg})|_{t=0} \,
&= -\,  \int_\Omega (I - \mathcal{L}_f)^{ -1} (\zeta) \,DN_f(g)\,  d\mu_f \\
&= -\,   \int_\Omega \zeta \,DN_f(g)\,  d \mu_f \;.
 \end{split}
 \end{equation}

We assumed that $g = \eta + (v - v \circ \sigma)$, where $\eta \in \mathrm{Ker}(\mathcal{L}_f)$, then,
\begin{equation} \label{ient423}  \frac{d}{dt}h( \mu_{f + tg}) =-\,   \int_\Omega \zeta \, DN_f(g)\,  d \mu_f = -\,   \int_\Omega \zeta \,\eta\,  d \mu_f \,. \end{equation}

\end{proof}

\medskip

\begin{corollary}  \label{cvaga1}  Given $f, g \in \mathrm{Lip}(\Omega)$, assume that  $f= \zeta + (u - u \circ \sigma) +c$ and $g = \eta + (v - v \circ \sigma)$, with $\zeta, \eta \in \mathrm{Ker}(\mathcal{L}_f)$ and   $\int_\Omega \eta^2 d \mu_f=1$. Then, the maximal value of 
\begin{equation} \label{lent1}  \frac{d}{dt}h( \mu_{f + tg})= -\,   \int_\Omega \zeta \,\eta\,  d \mu_f, \end{equation}
among all the $\eta$'s on the kernel of the operator $\mathcal{L}_f$,  indicates the direction of the larger increasing of entropy for $h( \mu_{f + tg})$, when $t$ is close to zero. 

\end{corollary}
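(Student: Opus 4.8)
The plan is to recognize the quantity to be maximized as an inner product on $\mathrm{Ker}(\mathcal{L}_f)$ and then reduce the problem to the Cauchy--Schwarz inequality. By Proposition \ref{vaga1}, the value $\frac{d}{dt}h(\mu_{f+tg})|_{t=0}$ equals $-\int_\Omega \zeta\,\eta\, d\mu_f$, where $\zeta \in \mathrm{Ker}(\mathcal{L}_f)$ is the kernel component of $f$ fixed by the decomposition \eqref{kluy}, and $\eta \in \mathrm{Ker}(\mathcal{L}_f)$ ranges over the admissible directions. The key observation is that, by Theorem \ref{faef7}, the bilinear form $\langle v, w\rangle := \int_\Omega v\,w\, d\mu_f$ is exactly the inner product giving the Riemann structure on the tangent space $T_f(\mathcal{N}(\Omega)) = \mathrm{Ker}(\mathcal{L}_f)$, and the normalization $\int_\Omega \eta^2\, d\mu_f = 1$ is precisely the condition $\|\eta\| = 1$.

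First I would recast the optimization as maximizing $-\langle \zeta, \eta\rangle$ over the unit sphere $\{\eta \in \mathrm{Ker}(\mathcal{L}_f) : \|\eta\| = 1\}$. Applying Cauchy--Schwarz in the inner-product space $(\mathrm{Ker}(\mathcal{L}_f), \langle\cdot,\cdot\rangle)$ gives
\begin{equation*}
-\int_\Omega \zeta\,\eta\, d\mu_f = -\langle \zeta,\eta\rangle \leq |\langle \zeta,\eta\rangle| \leq \|\zeta\|\,\|\eta\| = \|\zeta\| = \sqrt{\int_\Omega \zeta^2\, d\mu_f}\,.
\end{equation*}
Equality in Cauchy--Schwarz forces $\eta$ to be parallel to $\zeta$; since we are maximizing $-\langle \zeta,\eta\rangle$ and not its absolute value, the sign must be chosen so that $\langle \zeta,\eta\rangle$ is as negative as possible, which yields the optimal direction $\eta^\ast = -\zeta/\|\zeta\|$. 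This $\eta^\ast$ lies in $\mathrm{Ker}(\mathcal{L}_f)$ because the kernel is a vector subspace of $\mathrm{Lip}(\Omega)$, and it satisfies $\int_\Omega (\eta^\ast)^2\, d\mu_f = 1$, so it is admissible. Substituting back gives the maximal value $\|\zeta\| = \sqrt{\int_\Omega \zeta^2\, d\mu_f}$, attained exactly along $\eta^\ast$, so this is the direction of the largest first-order increase of entropy.

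I do not expect a genuine obstacle, since the argument is a direct application of Cauchy--Schwarz once the inner-product interpretation from Theorem \ref{faef7} is in place. The only point deserving a remark is the degenerate case $\zeta = 0$: then the derivative vanishes for every admissible $\eta$, so entropy has zero first-order variation in all kernel directions and no preferred direction exists; this case should be excluded when speaking of \emph{the} direction of largest increase. It is also worth emphasizing that $\zeta$ is determined solely by $f$ through the splitting $\mathrm{Lip}(\Omega) = \mathrm{Ker}(\mathcal{L}_f)\oplus\mathcal{C}$, so both the optimal direction $-\zeta/\|\zeta\|$ and the maximal rate $\|\zeta\|$ are intrinsic quantities attached to the base probability $\mu_f$ and are expressed entirely in terms of elements of the kernel of $\mathcal{L}_f$, as claimed.
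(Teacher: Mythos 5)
Your proposal is correct and follows essentially the same route as the paper: the identity $\frac{d}{dt}h(\mu_{f+tg})|_{t=0}=-\int_\Omega \zeta\,\eta\,d\mu_f$ comes from Proposition \ref{vaga1}, and the maximization over the unit sphere of $\mathrm{Ker}(\mathcal{L}_f)$ in the inner product of Theorem \ref{faef7} is exactly what the corollary asserts. The paper leaves the optimization step implicit, whereas you make it explicit via Cauchy--Schwarz, correctly identifying the optimal direction $\eta^\ast=-\zeta/\|\zeta\|$, the maximal value $\sqrt{\int_\Omega\zeta^2\,d\mu_f}$, and the degenerate case $\zeta=0$ --- a useful, fully consistent completion.
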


Given $\varphi \in \mathrm{Lip}(\Omega)$, the {\em Pressure problem} about find the value $P(\varphi)$ (see the expression in \eqref{e1234}) for $T$-invariant probabilities $\mu$ is one of the main issues in Termodynamic Formalism (see \cite{PP} and \cite{LMMS}). For analyzing this problem a concept of entropy is required and this topic  was addressed via \eqref{ruop7777}. It is known the existence of a unique maximizing solution $\mu_B \in \mathcal{N}(\Omega)$, for $P(\varphi)$, where  $B \in \mathcal{N}(\Omega)$, and $\mu_B$ is called the equilibrium probability for $\varphi$ (and also for $B$). Even more, the functions $B$ and $\varphi$ are co-boundary.

 In the next theorem, we are interested in the directional derivatives associated with such a problem.
\smallskip

\begin{theorem} \label{Prepre}  Given $\varphi \in \mathrm{Lip}(\Omega)$, we are interested in estimating directional derivatives of the  pressure of $\varphi$ given by
\begin{equation} \label{e1234} P(\varphi) = \sup_{\mu \in \mathcal{M}_T(\Omega)} \Bigl\{ h(\mu) + \int_\Omega \varphi\, d \mu \Bigr\} \,.
\end{equation}
Denote by $\mu_B \in \mathcal{N}(\Omega)$, where $B \in \mathcal{N}(\Omega)$ the  unique maximizing solution for $P(\varphi)$ (see \cite{LMMS} for details). Consider another normalized potential $f$ and the associated equilibrium probability $\mu_f \in \mathcal{N}(\Omega)$, where $\mu_f \neq \mu_B$.

  Assume that $f= \zeta + (u - u \circ \sigma) +c$, $g = \eta + (v - v \circ \sigma)$ and $\varphi = \xi + (w - w \circ \sigma) +k$, where $\zeta, \eta, \xi \in \mathrm{Ker}(\mathcal{L}_f)$ and $c, k \in \mathbb{R}$. In this case, we do not assume that $g$ is in the Kernel of $\mathcal{L}_f$, but we assume for simplification  that it has integral $\int g d \mu_f=0$.

It is natural to consider variations of the law  $t \mapsto f + tg$ and the directional derivative
\begin{equation} \label{ie1237} \frac{d}{dt}\, \Bigl(\,h( \mu_{f + tg}) + \int_\Omega \varphi\,\, d \mu_{f + tg} \,\Bigr)|_{t=0}.\end{equation}
For fixed $\varphi$, we can ask about the maximal value of  \eqref{ie123} for  $\eta \in \mathcal{L}_f$, when $\eta$ satisfies $\int_\Omega \eta^2 = 1$.

  We show that  an  equivalent condition to  get \eqref{ie123} for a given  $g$ (and therefore for the associated $\eta \in \mathrm{Ker}(\mathcal{L}_f)$) is:
  \begin{equation} \label{vaga3} \int_\Omega \,\, (\xi\, - \zeta)\,\, \eta\, d \mu_f \;. \end{equation}
  
Then,  in the case \eqref{ie1237} is critical for all variations $\eta$ on the kernel (that is, $\eqref{ie1237}=0$), it will also be critical for all  variations $g$ not in the kernel (but with  $\mu_f$ mean zero).

\end{theorem}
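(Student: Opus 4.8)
The plan is to split the functional $\Phi(t) := h(\mu_{f + tg}) + \int_\Omega \varphi\, d\mu_{f + tg}$ into its two summands and differentiate each separately, since both pieces have already been computed in the results above. First I would treat the entropy term by invoking Proposition \ref{vaga1}: with the decompositions $f = \zeta + (u - u\circ\sigma) + c$ and $g = \eta + (v - v\circ\sigma)$, where $\zeta, \eta \in \mathrm{Ker}(\mathcal{L}_f)$, it gives
\[
\frac{d}{dt} h(\mu_{f + tg})\big|_{t=0} = -\int_\Omega \zeta\,\eta\, d\mu_f \,.
\]
Next I would treat the observable term via Corollary \ref{vaga2}: writing $\varphi = \xi + (w - w\circ\sigma) + k$ with $\xi \in \mathrm{Ker}(\mathcal{L}_f)$, the same $\eta$ (the kernel part of $g$) produces $\frac{d}{dt}\int_\Omega \varphi\, d\mu_{f + tg}|_{t=0} = \int_\Omega \xi\,\eta\, d\mu_f$. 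Adding the two contributions yields
\[
\frac{d}{dt}\Phi(t)\big|_{t=0} = \int_\Omega (\xi - \zeta)\,\eta\, d\mu_f \,,
\]
which is precisely the expression \eqref{vaga3}, establishing the equivalent condition for \eqref{ie1237}.

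For the final criticality claim I would exploit the direct sum decomposition $\mathrm{Lip}(\Omega) = \mathrm{Ker}(\mathcal{L}_f) \oplus \mathcal{C}$ together with Remark \ref{ptre}. The key observation is that both derivatives above depend on $g$ only through its kernel projection $\eta$: the coboundary part $v - v\circ\sigma$ contributes nothing, and since $\int g\, d\mu_f = 0$ is automatic once $g$ is written with no constant term, the value of \eqref{ie1237} is entirely governed by $\eta \in \mathrm{Ker}(\mathcal{L}_f)$. Consequently, if $\int_\Omega (\xi - \zeta)\,\eta\, d\mu_f = 0$ for every $\eta$ in the kernel, then in particular it vanishes for the $\eta$ attached to an arbitrary $g$ (with $\mu_f$-mean zero), so \eqref{ie1237} is critical along every such direction $g$, not merely along kernel directions.

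I do not expect a genuine obstacle here, as the analytic heavy lifting is carried by Proposition \ref{vaga1} and Corollary \ref{vaga2}; the theorem is essentially their additive combination. The only point requiring care is the bookkeeping of the three separate Hodge-type decompositions (of $f$, $g$, and $\varphi$) and verifying that the derivative is insensitive to the coboundary components, which is exactly the content of Remark \ref{ptre} applied to each integral. A secondary subtlety worth flagging is the sign: the entropy contribution enters with a minus sign while the potential contribution enters with a plus sign, and it is this mismatch that produces the meaningful difference $\xi - \zeta$ rather than a sum, reflecting that the quantity measures how the equilibrium value $h + \int\varphi\, d\mu$ moves as the reference potential $f$ is perturbed away from $\varphi$.
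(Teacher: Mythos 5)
Your proposal is correct and follows essentially the same route as the paper: the paper's proof is precisely the one-line combination of Proposition \ref{vaga1} (entropy term) and Corollary \ref{vaga2} (observable term) yielding $\int_\Omega (\xi - \zeta)\,\eta\, d\mu_f$. Your additional paragraph spelling out why the derivative depends on $g$ only through its kernel component $\eta$ is a welcome elaboration of the final criticality claim, which the paper leaves implicit.
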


\begin{proof} 
  From Corollary  \ref{vaga2} and  Proposition \ref{vaga1}, we have
  
\begin{equation} \label{lent17} 
\begin{split}
\frac{d}{dt}h(\mu_{f + tg})|_{t=0} + \frac{d}{dt} \int_\Omega \varphi \, d \mu_{f + tg}|_{t=0}
&= -\,   \int_\Omega \zeta \,\eta\,  d \mu_f + \int_\Omega \,\, \xi\, \eta\, d \mu_f \\ 
&= \int_\Omega \,\, (\xi\, - \zeta)\, \eta\, d \mu_f \;.   
 \end{split}
 \end{equation}

\end{proof}

Note that when $\mu_f$, which is Lipschitz-Gibbs probability for the normalized potential $f$, maximizes  \eqref{e1234}, the functions $f$ and $\varphi$ are co-boundaries. In this case $\xi\, - \zeta = 0$ and \eqref{ie123} is equal to zero in any direction $\eta$. Therefore, the probability $\mu_B$ is a critical point for the directional derivatives of the law $t \mapsto h( \mu_{f + tg}) + \int_\Omega \varphi\,\, d \mu_{f + tg}$.

\smallskip

In  Subsection \ref{subi} we  show  for some particular examples of $\mu_f$ a
more explicit  expression for \eqref{vaga}).

\medskip

Another expression for the second derivative of pressure can be obtained in the following way.

\begin{theorem} \label{ddef14}
Given $f, g \in \mathrm{Lip}(\Omega)$ and $t \in \mathbb{R}$, assume $f \in \mathcal{N}(\Omega)$ and denote by $p(t) := P(f + tg)$. Then,

\begin{equation} \label{lulu46}
 p^{\prime \prime}(0) = \frac{d^2}{dt^2} P(f + tg)|_{t=0} =\int_\Omega\, \Bigl(\, g - \int_\Omega g d \mu_f \,+ \,\frac{d}{dt}(\varphi_t - \varphi_t  \circ \sigma)|_{t=0} \,\Bigr)^2 \,d \mu_f \;,
\end{equation}
where $w_{f + tg} = e^{\varphi_t}$ is the eigenfunction for $f + tg$.
\end{theorem}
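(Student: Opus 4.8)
The plan is to first reduce to the mean-zero case and then convert the variance formula already recorded in Corollary \ref{cltav2} into the displayed squared-coboundary form. Set $\bar g := \int_\Omega g\, d\mu_f$ and $\tilde g := g - \bar g$. Since adding a constant to a potential shifts its pressure by that same constant and leaves the eigenfunction unchanged, we have $P(f + t\tilde g) = P(f+tg) - t\bar g$ and $w_{f+t\tilde g} = w_{f+tg}$; hence $\frac{d^2}{dt^2}P(f+t\tilde g)|_{t=0} = p''(0)$, the functions $\varphi_t$ and therefore the coboundary term $\frac{d}{dt}(\varphi_t - \varphi_t\circ\sigma)|_{t=0}$ are the same for $g$ and $\tilde g$, and $\int_\Omega \tilde g\, d\mu_f = 0$. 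Because $g - \bar g = \tilde g$, it suffices to prove the identity under the extra assumption $\int_\Omega g\, d\mu_f = 0$.

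Under that assumption, write $\dot w := \frac{d}{dt}w_{f+tg}|_{t=0}$ and $\dot\varphi := \frac{d}{dt}\varphi_t|_{t=0}$; since $f \in \mathcal{N}(\Omega)$ forces $w_f \equiv 1$ and $\varphi_0 \equiv 0$, we get $\dot w = \dot\varphi$. Corollary \ref{cltav2} then reads $p''(0) = \int_\Omega g^2\, d\mu_f + 2\int_\Omega g\,\dot\varphi\, d\mu_f$. Expanding the target right-hand side with $D := \dot\varphi - \dot\varphi\circ\sigma$ and using the $\sigma$-invariance of $\mu_f$ (so that $\int_\Omega (\dot\varphi\circ\sigma)^2\, d\mu_f = \int_\Omega \dot\varphi^2\, d\mu_f$), the desired equality $\int_\Omega (g+D)^2\, d\mu_f = \int_\Omega g^2\, d\mu_f + 2\int_\Omega g\,\dot\varphi\, d\mu_f$ collapses to the single scalar identity $\int_\Omega \dot\varphi^2\, d\mu_f = \int_\Omega (g+\dot\varphi)\,(\dot\varphi\circ\sigma)\, d\mu_f$.

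To establish this last identity I would differentiate the eigenvalue relation $\mathcal{L}_{f+tg}(w_{f+tg}) = \lambda_{f+tg}\, w_{f+tg}$ at $t=0$, the interchange of derivative and integral being licensed by the analyticity of $t \mapsto w_{f+tg}$ recorded in Section \ref{aro-sec}. Using $w_f \equiv 1$, $\lambda_f = 1$, and $\frac{d}{dt}\lambda_{f+tg}|_{t=0} = \int_\Omega g\, d\mu_f = 0$ from Lemma \ref{ddef1}, this yields the linear relation $\mathcal{L}_f(g + \dot\varphi) = \dot\varphi$. Substituting it into $\int_\Omega \dot\varphi^2\, d\mu_f = \int_\Omega \dot\varphi\, \mathcal{L}_f(g+\dot\varphi)\, d\mu_f$ and invoking the transfer-operator adjoint identity $\int_\Omega v\,\mathcal{L}_f(w)\, d\mu_f = \int_\Omega (v\circ\sigma)\, w\, d\mu_f$ (which follows from the pull-out rule $\mathcal{L}_f((v\circ\sigma)w) = v\,\mathcal{L}_f(w)$ together with $\mathcal{L}_f^*\mu_f = \mu_f$) gives precisely $\int_\Omega (\dot\varphi\circ\sigma)(g+\dot\varphi)\, d\mu_f$, which closes the argument.

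The crux I expect is the derivation of the relation $\mathcal{L}_f(g+\dot\varphi) = \dot\varphi$ and its pairing with the adjoint identity; the rest is bookkeeping. The only genuine technical points are justifying differentiation under the integral sign and the existence of $\dot\varphi$ as a Lipschitz function, both guaranteed by the analyticity established earlier.
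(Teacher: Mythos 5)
Your argument is correct, but it reaches \eqref{lulu46} by a genuinely different route than the paper. The paper's proof differentiates the normalization identity $\int_M J_t(ax)\,d\nu(a)=1$ for $J_t:=e^{f+tg+\varphi_t-\varphi_t\circ\sigma-p(t)}$ twice in $t$, integrates against $\mu_f$, and reads off the squared expression directly, using only the first-order facts $p'(0)=\int_\Omega g\,d\mu_f$ and $\frac{d}{dt}\int_\Omega(\varphi_t-\varphi_t\circ\sigma)\,d\mu_f|_{t=0}=0$; it never invokes the asymptotic-variance formula. You instead take Corollary \ref{cltav2} as the starting point, reduce to $\int_\Omega g\,d\mu_f=0$ by the (correct) observation that adding a constant to the potential shifts the pressure affinely and leaves the eigenfunction unchanged, and then show that the discrepancy between $\int_\Omega g^2\,d\mu_f+2\int_\Omega g\,\dot\varphi\,d\mu_f$ and $\int_\Omega(g+\dot\varphi-\dot\varphi\circ\sigma)^2\,d\mu_f$ vanishes because of the differentiated eigenvalue equation $\mathcal{L}_f(g+\dot\varphi)=\dot\varphi$ (which is exactly \eqref{tripe1} specialized to mean zero) combined with the adjoint identity $\int_\Omega v\,\mathcal{L}_f(w)\,d\mu_f=\int_\Omega(v\circ\sigma)\,w\,d\mu_f$. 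Each step checks out, including the collapse to the scalar identity $\int_\Omega\dot\varphi^2\,d\mu_f=\int_\Omega(g+\dot\varphi)(\dot\varphi\circ\sigma)\,d\mu_f$. What your approach buys is an explicit reconciliation of the two formulas for $p''(0)$ appearing in the paper (Corollary \ref{cltav2} versus Theorem \ref{ddef14}), at the cost of depending on the earlier CLT machinery; the paper's computation is self-contained and yields the perfect-square form in one stroke, which is why it does not need the mean-zero reduction.
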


\begin{proof}
We assume that for all $t$ (small and close to $0$) and any $x$

$$\int_M J_t(ax) d\nu(a) = \int_M e^{ ( f(ax) + tg(ax) ) + [\varphi_t(ax) - \varphi_t(x)] - p(t)}d\nu(a) =1 \;.$$

where $J_t := e^{f + tg + \varphi_t - \varphi_t \circ \sigma - p(t)}$. We know that
\begin{equation}  \label{clo78} p'(0) = \frac{d}{dt} P(f+tg)|_{t=0} = \int_\Omega g d \mu_f, \end{equation}

\begin{equation}  \label{robep} \frac{d}{dt} \int_\Omega (\varphi_t - \varphi_t \circ \sigma) d\mu_f |_{t=0} = 0 \;,
\end{equation}
and
$$ \frac{d^2}{ dt^2}  \int_\Omega (\varphi_t - \varphi_t \circ \sigma) d\mu_f |_{t=0} = 0 \;.$$

Moreover,
\begin{align*}
0 =& \; \frac{d^2}{dt^2}  \int_\Omega \int_M e^{ ( f(ax) + tg(ax) ) + [\varphi_t(a x) - \varphi_t(x)] - p(t)} d\nu(a)  d\mu_f(x) \\
=& \; \frac{d}{dt} \int_\Omega \int_M J_t(ax)\,  \Bigl( g(ax) + \frac{d}{dt}(\varphi_t(a x) - \varphi_t (x)) - \frac{d}{dt} p(t) \Bigr)d\nu(a) d \mu_f(x) \\
=& \int_\Omega \int_M J_t(ax) \Bigl( g(ax) + \frac{d}{dt}(\varphi_t(ax) - \varphi_t(x)) - \frac{d}{dt} p(t)\Bigr)^2 \\
&+ J_t(ax) \Bigl( \frac{d^2}{ dt^2}(\varphi_t(ax) - \varphi_t(x)) - \frac{d^2}{dt^2} p(t) \Bigr)d\nu(a) d\mu_f(x) \;.
\end{align*}

 Then, from the above   and \eqref{robep}, it follows that
\begin{align*}
0 &= \frac{d^2}{dt^2}  \int_\Omega \int_M e^{ ( f(ax) + tg(ax) ) + [\varphi_t(a x) - \varphi_t(x)] - p(t)} d\nu(a)  d\mu_f(x)|_{t=0} \\
=& \int_\Omega \int_M \Bigl( g(ax) + \frac{d}{dt}(\varphi_t(ax) - \varphi_t(x))|_{t=0} - \frac{d}{dt} p(t)|_{t=0}\Bigr)^2 \\
&+ \Bigl( \frac{d^2}{ dt^2}(\varphi_t(ax) - \varphi_t(x))|_{t=0} - \frac{d^2}{dt^2} p(t)|_{t=0} \Bigr)d\nu(a) d\mu_f(x) \\ 
=& \int_\Omega \int_M \Bigl( g(ax) + \frac{d}{dt}(\varphi_t(ax) - \varphi_t(x))|_{t=0} - \frac{d}{dt} p(t)|_{t=0}\Bigr)^2 - \frac{d^2}{dt^2} p(t)|_{t=0} d\nu(a) d\mu_f(x) \;.
\end{align*}

   Finally, since we have $\mu_f \in \mathcal{M}_T(\Omega)$, it follows that

  $$ \frac{d^2}{dt^2} p(t)|_{t=0} = \int_\Omega \Bigl( g - \int_\Omega g d\mu_f + \frac{d}{dt}(\varphi_t - \varphi_t \circ \sigma)|_{t=0}\Bigr)^2 d\mu_f \;.$$
\end{proof}

\begin{theorem} \label{rafrug}
Suppose  $f \in \mathcal{N}(\Omega)$, we are going to take derivative on the direction $g \in \mathrm{Lip}(\Omega)$. Denote by $w(t,x) :=w_{f + tg}(x)$ the eigenfunction for $\mathcal{L}_{f + tg}$ associated to the eigenvalue $\lambda_{f + tg}$. Then, the function $x \mapsto \frac{\partial}{\partial t} w(t, x)|_{t=0}$ satisfies the following expression for all $x \in \Omega$
\begin{equation} \label{tripe1} (\mathcal{L}_f  - I) (\frac{\partial}{\partial t} w(t, .))|_{t=0} = \int_\Omega g d \mu_f -  \mathcal{L}_f (g) \;.
\end{equation}

Or, equivalently,
\begin{equation} \label{tripe112} \frac{\partial}{\partial t} w(t, .)|_{t=0} = (\mathcal{L}_f  - I)^{-1}  (\,\int_\Omega g d \mu_f -  \mathcal{L}_f (g) \,)\;.
\end{equation}

\end{theorem}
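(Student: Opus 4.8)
The plan is to differentiate the defining eigenvalue relation $\mathcal{L}_{f+tg}(w(t,\cdot)) = \lambda_{f+tg}\, w(t,\cdot)$ at $t=0$ and read off \eqref{tripe1}. First I would record the $t=0$ data: since $f \in \mathcal{N}(\Omega)$ we have $\mathcal{L}_f(1)=1$, so the main eigenvalue is $\lambda_f = 1$ and, under the normalization $\int_\Omega w_f\, d\rho_f = 1$ with $\rho_f = \mu_f$, the eigenfunction is the constant $w(0,\cdot) \equiv 1$. The analyticity of the maps $t \mapsto w(t,\cdot)$ and $t \mapsto \lambda_{f+tg}$ (Proposition \ref{espectro2} and the discussion in Section \ref{aro-sec}) guarantees that $\partial_t w(t,\cdot)|_{t=0}$ exists in $\mathrm{Lip}(\Omega)$ and, together with the compactness of $M$ and the bounded dependence on $t$ of the integrand in \eqref{ruop}, justifies differentiating under the integral $\int_M d\nu(a)$.

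Next I would carry out the differentiation. Writing the left-hand side as $\int_M e^{(f+tg)(ax)} w(t,ax)\, d\nu(a)$ and applying the product rule, the $t$-derivative at $t=0$ splits, using $w(0,\cdot)\equiv 1$, into $\int_M e^{f(ax)} g(ax)\, d\nu(a) + \int_M e^{f(ax)} \partial_t w(t,ax)|_{t=0}\, d\nu(a) = \mathcal{L}_f(g) + \mathcal{L}_f(\partial_t w|_{t=0})$. On the right-hand side, the product rule together with $\lambda_f = 1$, $w(0,\cdot)\equiv 1$, and the first-derivative formula $\frac{d}{dt}\lambda_{f+tg}|_{t=0} = \int_\Omega g\, d\mu_f$ from Lemma \ref{ddef1} yields $\int_\Omega g\, d\mu_f + \partial_t w|_{t=0}$. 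Equating the two sides and transposing the $\partial_t w|_{t=0}$ term gives exactly $(\mathcal{L}_f - I)(\partial_t w|_{t=0}) = \int_\Omega g\, d\mu_f - \mathcal{L}_f(g)$, which is \eqref{tripe1}.

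For the equivalent form \eqref{tripe112} I would address the invertibility of $\mathcal{L}_f - I$. This operator is not invertible on all of $\mathrm{Lip}(\Omega)$, since $\mathcal{L}_f(1)=1$ places the constants in its kernel, so one must work modulo constants, i.e.\ on the subspace $\{h \in \mathrm{Lip}(\Omega) : \int_\Omega h\, d\mu_f = 0\}$. I would first check that the right-hand side lies there: because $\mathcal{L}_f^*(\mu_f) = \mu_f$ we have $\int_\Omega \mathcal{L}_f(g)\, d\mu_f = \int_\Omega g\, d\mu_f$, hence $\int_\Omega \bigl(\int_\Omega g\, d\mu_f - \mathcal{L}_f(g)\bigr)\, d\mu_f = 0$. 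On this mean-zero subspace the spectral gap property following from \eqref{rpf2} (with $\lambda_f = 1$, $w_f \equiv 1$, $\rho_f = \mu_f$) gives $\mathcal{L}_f^n(h) \to 0$ geometrically, so $(I - \mathcal{L}_f)^{-1} = \sum_{n\ge 0}\mathcal{L}_f^n$ converges and $(\mathcal{L}_f - I)^{-1} = -(I-\mathcal{L}_f)^{-1}$ is a bounded bijection of the mean-zero subspace onto itself. Applying this inverse to \eqref{tripe1} yields \eqref{tripe112}, where $\partial_t w|_{t=0}$ is the (unique) mean-zero representative, the additive-constant ambiguity having been fixed by the normalization of the eigenfunction consistent with $w(0,\cdot)\equiv 1$.

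The main obstacle I anticipate is the bookkeeping in this last step: equation \eqref{tripe1} only determines $\partial_t w|_{t=0}$ up to an additive constant, and making sense of $(\mathcal{L}_f - I)^{-1}$ requires both the mean-zero reduction and the spectral-gap estimate to conclude convergence of the Neumann series on the correct subspace. By contrast, the differentiation-under-the-integral and product-rule computations of the first two steps are routine once the analyticity of $t \mapsto w(t,\cdot)$ and $t \mapsto \lambda_{f+tg}$ from Section \ref{aro-sec} is invoked.
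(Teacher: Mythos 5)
Your proposal is correct and follows essentially the same route as the paper: differentiate the eigenvalue relation $\mathcal{L}_{f+tg}(w(t,\cdot)) = \lambda_{f+tg}\,w(t,\cdot)$ at $t=0$, use $w(0,\cdot)\equiv 1$, $\lambda_f=1$ and Lemma \ref{ddef1}, and rearrange to obtain \eqref{tripe1}. Your additional care with \eqref{tripe112} --- noting that $\mathcal{L}_f - I$ kills constants, that the right-hand side is $\mu_f$-mean zero, and that the inverse is realized by a Neumann series on the mean-zero subspace via the spectral gap --- is a correct justification of a step the paper states without proof.
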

\begin{proof}

We will estimate for each $t \in \mathbb{R}$ the partial derivative of the eigenfunction $ \frac{\partial}{\partial t}  w(t, x)|_{t=0}$. Indeed, taking derivative on $t$, we obtain
$$ \frac{\partial}{\partial t}  \mathcal{L}_{f + tg} (w(t,.) ) (x) = \mathcal{L}_{f + tg} ( g\,w(t,.))(x) + \mathcal{L}_{f + tg} (\frac{\partial}{\partial t}   w(t,.)) (x). $$

On the other hand, for all $x \in \Omega$ we have
$$ \frac{\partial}{\partial t}( \lambda_{f + tg}\, w(t,x)\,)  = \, w(t,x) \frac{d}{dt}\lambda_{f + tg}\,+ \lambda_{f + tg}\,\frac{\partial}{\partial t}  w(t,x) \;.$$
Then,
\begin{equation} \label{uct} \mathcal{L}_{f + tg} ( g\,w(t,.))(x) + \mathcal{L}_{f + tg} (\frac{\partial}{\partial t}   w(t,.)) (x) =  w(t,x) \frac{d}{dt}\lambda_{f + tg}\,+ \lambda_{f + tg}\,\frac{\partial}{\partial t}  w(t,x) \;.
\end{equation}

Therefore, when $t=0$ we get
$$ \mathcal{L}_f (g)(x) + \mathcal{L}_f (\frac{\partial}{\partial t} w( t, .)|_{t=0})(x) = \int_\Omega g d \mu_f +\frac{\partial}{\partial t} w(t,x) |_{t=0} \;.$$

Finally,
\begin{equation} \label{uct1} (\mathcal{L}_f - I) (\frac{\partial}{\partial t}  w( t, .)|_{t=0} ) =  \int_\Omega g d \mu_f -  \mathcal{L}_f(g) \;.
\end{equation}
\end{proof}

The next result was relevant in \cite{LR1}.

\begin{theorem} \label{saKL} Under the former hypothesis, if $\eta \in \mathrm{Ker}(\mathcal{L}_f)$ and  $w(t,x) :=w_{f + t\, \eta}(x)$. Then, $\frac{\partial}{\partial t}  w(t, x) |_{t=0}$ is a constant function independent of $x$.
\end{theorem}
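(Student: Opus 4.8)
The plan is to specialize the identity established in Theorem \ref{rafrug} to the direction $g = \eta \in \mathrm{Ker}(\mathcal{L}_f)$ and then read off the conclusion from the spectral properties guaranteed by the RPF theorem; the result is essentially an immediate corollary. First I would substitute $g = \eta$ into \eqref{tripe1}, which gives
\begin{equation*}
(\mathcal{L}_f - I)\Bigl(\tfrac{\partial}{\partial t} w(t, \cdot)|_{t=0}\Bigr) = \int_\Omega \eta \, d\mu_f - \mathcal{L}_f(\eta).
\end{equation*}
Now the right-hand side simplifies completely: since $\eta \in \mathrm{Ker}(\mathcal{L}_f)$ we have $\mathcal{L}_f(\eta) = 0$, and consequently, using that $\mathcal{L}_f^*(\mu_f) = \mu_f$, it follows that $\int_\Omega \eta \, d\mu_f = \int_\Omega \mathcal{L}_f(\eta)\, d\mu_f = 0$. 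Hence the entire right-hand side vanishes.

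Next I would set $v := \frac{\partial}{\partial t} w(t, \cdot)|_{t=0}$ and observe that the previous step yields $(\mathcal{L}_f - I)(v) = 0$, that is, $\mathcal{L}_f(v) = v$. In other words, $v$ is an eigenfunction of $\mathcal{L}_f$ associated with the eigenvalue $1$. Since $f \in \mathcal{N}(\Omega)$ we have $\mathcal{L}_f(1) = 1$, so $\lambda_f = 1$ is precisely the maximal eigenvalue of $\mathcal{L}_f$ and the constant function $w_f \equiv 1$ is its associated eigenfunction.

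Finally I would invoke the one-dimensionality of the eigenspace associated with the maximal eigenvalue, which follows from the RPF theorem together with the spectral gap property recorded in Section \ref{pre-sec} (see also \cite{FaJi,Bala}). This forces $v$ to be a scalar multiple of $w_f \equiv 1$, hence a constant function independent of $x$, which is exactly the claim. There is no genuine obstacle here: the only point requiring a word of justification is that $v$ indeed belongs to $\mathrm{Lip}(\Omega)$, so that the eigenspace characterization applies, but this is already guaranteed by the analyticity of the map $t \mapsto w_{f + t\eta}$ established earlier. The substantive content of the argument is thus the cancellation $\int_\Omega \eta\, d\mu_f - \mathcal{L}_f(\eta) = 0$ combined with the simplicity of the leading eigenvalue.
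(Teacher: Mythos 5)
Your argument is correct and follows essentially the same route as the paper: specialize the identity of Theorem \ref{rafrug} to $g=\eta$, use $\mathcal{L}_f(\eta)=0$ and $\mathcal{L}_f^*(\mu_f)=\mu_f$ to kill the right-hand side, and conclude that $\frac{\partial}{\partial t}w(t,\cdot)|_{t=0}$ is a fixed point of $\mathcal{L}_f$, hence constant. You in fact supply the justification for the final step (simplicity of the leading eigenvalue $\lambda_f=1$ for normalized $f$) that the paper's proof leaves implicit.
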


\begin{proof} Since we assume $\eta \in \mathrm{Ker}(\mathcal{L}_f)$, it follows that $\int_\Omega \eta \, d \mu_f = 0$. Therefore, from \eqref{uct1} we get that
$$ (\mathcal{L}_f - I) (\frac{\partial}{\partial t} w(t,.)|_{t=0} ) = 0 \;.$$

The above, in particular implies that $(\frac{\partial}{\partial t} w(t,x)|_{t=0} )$ is constant.
\end{proof}

\subsection{Examples with explicit computation} \label{subi}

We consider in this section the symbolic space $\Omega := \{0,1\}^\mathbb{N}$ and the shift $T$ acting on it; this corresponds to take the space  of symbols $M=\{0,1\}.$

In this subsection, we want to show that for some particular examples of $\mu$ one can
get more explicit expressions which are helpful in applications of Theorem \ref{Prepre} (see expression \eqref{vaga333}). In order to do that we take advantage of the existence of an orthonormal basis in the Kernel of a certain Ruelle operator (see \cite{LR1} and \cite{CHLS}).

\medskip

We denote by $P$ the line stochastic matrix (with all entries positive)
 \begin{equation} \label{tororo2} P= \left(
\begin{array}{cc}
P_{0,0} & P_{0,1}\\
P_{1,0} & P_{1,1}
\end{array}\right).
\end{equation}

Consider the invariant Markov probability $\mu$ obtained from the stochastic matrix $(P_{i,j})_{i,j=0,1}$ and the unique initial {\bf left invariant {\em vector of probability} $\pi=(\pi_0,\pi_1)\in [0,1]^2$.}

The associated normalized potential $B=\log J$ satisfies: $J:\{0,1\}^\mathbb{N} \to (0,1)$
is constant equal
$$J_{i,j}=\frac{\pi_i \,P_{i,j}}{\pi_j}$$
on the {\bf cylinder $[ij] := \{x \in \Omega | x_1 = i; x_2 = j \}$, $i,j=0,1$.}

Let $\omega := (\omega_1, \omega_2,..., \omega_n) \in \{0, 1\}^n$ be a finite word on the symbols $\{0,1\}$. Denote by $[\omega]$ the cylinder set $[\omega] := \{x \in \Omega | x_1=\omega_1, ... , x_n=\omega_n  \}$. The empty word, denoted by $\emptyset$, is also considered a finite word.

 Given $\mu$ as above, consider the Hilbert space $L^2 (\mu)$ equipped with the inner product $<f,g>= \int_\Omega f \, g \,d \mu.$

The family of H\"older functions
\begin{align}
   e_{\omega} := \frac{1}{\sqrt{\mu([\omega])}} \sqrt{\frac{P_{\omega_n,1}}{P_{\omega_n,0}\,}} \, {\bf 1}_{[\omega0]} - \frac{1}{\sqrt{\mu([\omega])}} \sqrt{\frac{P_{\omega_n,0}}{P_{\omega_n,1} }} \, {\bf 1}_{[\omega1]},
    \label{eq52}
    \end{align}
$\omega := (\omega_1, \omega_2,..., \omega_n) \in \{0, 1\}^n$, is an orthonormal set for $L^2 (\mu)$ (see \cite{KS} for a general expression  and \cite{LR1}   for \eqref{eq52}; see also \cite{CHLS}). In order to get a (Haar) basis we have to  add
$e_{\emptyset} := \frac{1}{ \sqrt{\mu ([0])}}  {\bf 1}_{[0]} - \frac{1}{ \sqrt{\mu ([1])}} {\bf 1}_{[1]}$ to this family.

Given a finite word $\omega := (\omega_1, \omega_2, ... , \omega_n) \in \{0, 1\}^n$, we denote
\begin{equation*}  \label{luc1} a_\omega := \frac{ \sqrt{\pi_{\omega_1}  }} {\sqrt{\pi_{0}}\sqrt{P_{0,\omega_1} }} \,\, e_{0\omega} - \,\frac{ \sqrt{\pi_{\omega_1}  }} {\sqrt{\pi_{1}}\sqrt{P_{1,\omega_1} }}\,\,e_{1\omega},
\end{equation*}

Finally, we set
\begin{equation} \label{pede} \hat{a}_\omega : =\frac{1}{\|a_\omega\|}\, a_\omega \,.
\end{equation}
the normalization of $a_\omega$.

\smallskip

In order to get a complete orthonomal set for the kernel of the Ruelle operator $\mathcal{L}_{\log J}$ acting on $L^2 (\mu)$, we will have to add to the functions of the form \eqref{pede}  two more functions:   $\hat{a}_{\emptyset}^0$ and $\hat{a}_{\emptyset}^1$         , which are properly described in Definition 6.8 in \cite{LR1}. 

 Denote by $\{0, 1\}^*$ the set of all the finite words on $\{0, 1\}$, i.e., $\{0, 1\}^* := \emptyset \cup \bigcup_{n=1}^\infty \{0, 1\}^n$. One of the main results in \cite{LR1} is

\begin{theorem} \label{ort-basis}
 The  family $(\hat{a}_\omega)_{\omega \in \{0, 1\}^*}$, plus the two functions $\hat{a}_{\emptyset}^0$ and $\hat{a}_{\emptyset}^1$, determine an orthonormal set on the kernel of the Ruelle operator $\mathcal{L}_{\log J}$ acting on $L^2 (\mu)$.
\end{theorem}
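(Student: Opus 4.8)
The plan is to prove the two claims packaged in the statement separately: first, that each function in the proposed family belongs to $\mathrm{Ker}(\mathcal{L}_{\log J})$, and second, that the family is orthonormal in $L^2(\mu)$. Both reduce to one structural computation, the action of $\mathcal{L}_{\log J}$ on the Haar-type functions $e_\omega$ of \eqref{eq52}, combined with the already-cited fact (from \cite{KS,LR1}) that the $e_\omega$ form an orthonormal set in $L^2(\mu)$.

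First I would compute $\mathcal{L}_{\log J}$ on cylinder indicators. Using \eqref{ruop} with $M=\{0,1\}$ (so the integral against the a priori measure is the sum over $a\in\{0,1\}$) and the identity $J_{i,j}=\pi_i P_{i,j}/\pi_j$, one finds for a finite word $c=(c_1,\dots,c_m)$ with $m\geq 2$ that $\mathcal{L}_{\log J}(\mathbf{1}_{[c]})=J_{c_1,c_2}\,\mathbf{1}_{[c_2\cdots c_m]}$; that is, the operator peels off the leading symbol and multiplies by the corresponding entry of $J$. Applying this to \eqref{eq52} shows that, for nonempty $\omega$, $\mathcal{L}_{\log J}(e_{b\omega})$ is a nonzero scalar multiple of the single function $e_\omega$. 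Since $a_\omega$ is a fixed linear combination of $e_{0\omega}$ and $e_{1\omega}$, both images land on multiples of $e_\omega$, and I would check that these two multiples cancel exactly. The cancellation rests on the elementary relations $\mu([b\omega])=(\pi_b P_{b,\omega_1}/\pi_{\omega_1})\,\mu([\omega])$ and $J_{b,\omega_1}=\pi_b P_{b,\omega_1}/\pi_{\omega_1}$, which force each of the two coefficients of $e_\omega$ to equal $1$; hence $\mathcal{L}_{\log J}(a_\omega)=0$, and after the normalization \eqref{pede}, $\hat{a}_\omega\in\mathrm{Ker}(\mathcal{L}_{\log J})$.

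For orthonormality, unit norm holds by construction in \eqref{pede}, so only orthogonality remains. Here I would observe that, in the $e$-basis, $a_\omega$ is supported on exactly the two indices $0\omega$ and $1\omega$, and that for $\omega\neq\omega'$ the pairs $\{0\omega,1\omega\}$ and $\{0\omega',1\omega'\}$ are disjoint (an equality of two such words forces $\omega=\omega'$). Orthonormality of the $e_\omega$ then yields $\langle \hat{a}_\omega,\hat{a}_{\omega'}\rangle=0$ at once; likewise, since every $\hat{a}_\omega$ with $|\omega|\geq 1$ lives in the span of $e$'s indexed by words of length $\geq 2$, it is automatically orthogonal to the low-frequency block spanned by $\mathbf{1},e_\emptyset,e_0,e_1$. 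For the two extra functions $\hat{a}_\emptyset^0,\hat{a}_\emptyset^1$ of Definition 6.8 in \cite{LR1} I would then verify membership in the kernel and mutual orthogonality by the same direct computation carried out inside this finite-dimensional block.

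The hard part will be precisely this empty-word piece. For $\omega\neq\emptyset$ the kernel relation is produced automatically by the peeling action, because there are always two distinct one-symbol extensions whose images agree up to scalars; but at the bottom level the shift runs out of symbols to peel, the symbol $\omega_1$ in the recipe for $a_\omega$ becomes undefined, and one must instead solve directly for the kernel inside the low-frequency span, which is where $\hat{a}_\emptyset^0$ and $\hat{a}_\emptyset^1$ originate. Once the coefficient bookkeeping of the cancellation identity and this boundary case are settled, both assertions follow; and should completeness of the set be wanted as well, one would close the argument by a per-level dimension count matching the number of kernel generators at each word length against the known count of $e_\omega$ at that length.
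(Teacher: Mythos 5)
The paper offers no proof of this statement: Theorem \ref{ort-basis} is imported verbatim from \cite{LR1}, so there is nothing internal to compare your argument against. Judged on its own terms, your computation for the nonempty words is correct and is surely the intended mechanism. The identity $\mathcal{L}_{\log J}(\mathbf{1}_{[c]})=J_{c_1,c_2}\,\mathbf{1}_{[c_2\cdots c_m]}$ holds (with $\nu$ the counting measure on $\{0,1\}$, which is what makes $\log J$ normalized since $\pi$ is left-invariant), and it gives $\mathcal{L}_{\log J}(e_{b\omega})=\sqrt{J_{b,\omega_1}}\,e_\omega$ because $\mu([b\omega])/\mu([\omega])=J_{b,\omega_1}$. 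Plugging in the coefficients of $a_\omega$ from \eqref{luc1}, each of the two scalars multiplying $e_\omega$ is exactly $1$, so $\mathcal{L}_{\log J}(a_\omega)=e_\omega-e_\omega=0$; this is precisely the cancellation you describe. Your orthogonality argument is also sound: $a_\omega$ is supported on the two basis indices $0\omega,1\omega$, distinct $\omega$ give disjoint index pairs (equal concatenations force equal lengths, hence equal words), and unit norm is built into \eqref{pede}. Since the theorem claims only an orthonormal \emph{set}, not a basis, your closing remark about a dimension count is correctly flagged as optional.

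The one genuine gap is the part you yourself flag: the functions $\hat{a}_{\emptyset}^0$ and $\hat{a}_{\emptyset}^1$ (and the meaning of $a_\emptyset$ itself, whose defining formula references the undefined symbol $\omega_1$). The paper does not reproduce Definition 6.8 of \cite{LR1}, so your plan to ``verify membership in the kernel and mutual orthogonality by the same direct computation'' cannot actually be executed from the data given here; in particular your claim that these two functions lie in the span of $\mathbf{1},e_\emptyset,e_0,e_1$ is a plausible guess, not something you can check. This leaves the bottom level of the theorem unproved in your write-up. It is an unavoidable incompleteness given what the paper provides, but it should be stated as an appeal to \cite{LR1} rather than as a step you will carry out.
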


The set of elements in this orthonormal  family will be generically described by $(\hat{a}_\omega)_\omega$ (we allow $\omega$ to be the empty word $\emptyset$.)

An item of major interest  here is to express the Lipschitz function $\eta \in \mathcal{L}_{\log J}$ on the form
$$\eta =\sum_{\omega \in \{0, 1\}^*}\, \eta_\omega \hat{a}_\omega, $$
where $\eta_\omega := <\eta , \hat{a}_\omega>.$

An important issue in expression \eqref{lent17} in  Theorem \ref{Prepre} was to express $f= \zeta + (u - u \circ \sigma) +c$, $g = \eta + (v - v \circ \sigma)$ and $\varphi = \xi + (w - w \circ \sigma) +k$, where $\zeta, \eta, \xi \in \mathrm{Ker}(\mathcal{L}_f)$ and $c, k \in \mathbb{R}$. Here we take $g=\eta$, and  $f=\log J$, where $J$ was defined above. 

Therefore, given $\varphi$ we set
$$\zeta= \sum_ {\omega \in \{0, 1\}^*}\, f_\omega \hat{a}_\omega \;,$$
and 
$$\xi =\sum_ {\omega \in \{0, 1\}^*}\, \varphi_\omega \hat{a}_\omega  \;.$$

Since the family  $(\hat{a}_\omega)_\omega$ is orthonormal on $\mathrm{Ker}(\mathcal{L}_{\log J})$, it follows that

\begin{equation} \label{vaga333} 
\begin{split}
 \int_\Omega \,\, (\xi\, - \zeta)\,\, \eta\, d \mu 
 &= \int_\Omega  \Bigl(\,\sum_ {\omega \in \{0, 1\}^*}\, (\varphi_\omega - f_\omega)\hat{a}_\omega\,   \sum_ {\omega \in \{0, 1\}^*}\, \eta_\omega \hat{a}_\omega \,\Bigr)d \mu \\
 &= \,\sum_ {\omega \in \{0, 1\}^*}\, (\varphi_\omega - f_\omega)\, \eta_\omega.
\end{split}    
\end{equation}

\section*{Acknowledgements}

We would like to thank to FCT - Project UIDP/00144/2020 - (Portugal) and CNPq - (Brazil) for financial support during the development of this paper.

We thank R. Ruggiero for helpful comments related to  the topic of the paper.

\end{document}